\providecommand\@dotsep{5}
\newcommand\blfootnote[1]{%
  \begingroup
  \renewcommand\thefootnote{}\footnote{#1}%
  \addtocounter{footnote}{-1}%
  \endgroup
}
\newcommand{\e}{^}
 \theoremstyle{plain}
 \newtheorem{Thm}{Theorem}[section]
 \newtheorem{Lemma}[Thm]{\bf Lemma}
 \newtheorem{Corollary}[Thm]{\bf Corollary}
 \newtheorem{Theorem}[Thm]{\bf Theorem}
 \newtheorem{Proposition}[Thm]{\bf Proposition}
 \theoremstyle{definition}
 \newtheorem{Definition}[Thm]{\bf Definition}
 \theoremstyle{remark}
 \newtheorem{Example}[Thm]{\bf Example}
 \newtheoremstyle{Cl}
  {5pt}
  {3pt}
  {\sl}
  {}
  {\it}
  {:}
  {.5em}
  {}
 \theoremstyle{Cl}
 \def\begincproof{
                  \renewcommand{\proofname}{\it Proof:}
                  \begin{proof}
                 }
 \def\endcproof{
                \renewcommand{\qedsymbol}{$\diamondsuit$}
                \end{proof} 
                \renewcommand{\qedsymbol}{\openbox}
                \renewcommand{\proofname}{\bf Proof:}
               }
\title[Positive topological entropy for the Standard Map]{Positive topological entropy for the Standard Map}
\author{Fernando Oliveira}
\subjclass[2020]{Primary: 37; Secondary: A10, B40, B20, C29, E30.}
\begin{document}

\begin{abstract}
We show that for the standard map family, for all parameter values, except one, the mapping has positive topological entropy.

The proof of this result depends on results about the existence of transverse  homoclinic orbits, which in turn depend on the following result:

Let $S$ be a compact connected orientable surface and $f:S \rightarrow S$ an orientation preserving area preserving $C \e 1$ diffeomorphism of $S$.
Suppose that $U$ is an invariant domain of $S$ such that $fr_S{U}$ has a finite number of connected components.

Let $b$ be a regular ideal boundary point of $U$ which is fixed under the action induced by $f$ on the ideal boundary of $U$, and let $\hat{f}:C(b) \rightarrow C(b)$ be the homeomorphism induced on the corresponding circle of prime ends.
Let $Z(b)$ be the impression of $b$ in $S$ and assume that all fixed points of $f$ in $Z(b)$ are non degenerate. 

If there exists a fixed prime end $e \in C(b)$ then $C(b)$ has a finite number of fixed prime ends and there exists a finite singular covering $ \phi:C(b)\rightarrow Z(b)$, which is a semiconjugacy between the mapping of prime ends on $C(b)$ and the restriction of $f$ to $Z(b)$.
Furthermore, if $p$ is the principal point of a fixed prime end then $p$ is a fixed point of saddle type, and $Z(b)$ is the connected union of finitely many saddle connections and the corresponding saddles.

This result can be seen as a two dimensional analogue of the dynamics of orientation preserving homeomorphisms of the circle with fixed points.
\end{abstract}

\maketitle

\parskip +5pt

\hspace{20mm}  Universidade Federal de Minas Gerais,  Belo Horizonte, Brasil. 

\hspace{40mm} fernando.oliveira.f@gmail.com


\blfootnote{ {\it Keywords:} Standard map, homoclinic, prime end, ideal boundary, boundary dynamics.  }


\tableofcontents







 
\section{Introduction}

\subsection{Statement of results}
The standard map is a one parameter family of area preserving diffeomorphisms of the two dimensional torus $T \e 2 =\mathbb{R} \e 2/\mathbb{Z}\e 2$ given by 

\begin{center}
$f_{\mu}(x,y)=(x+y+\frac{\mu}{2\pi} \sin (2\pi x),y+\frac{\mu}{2\pi}\sin (2\pi x))$, $\mu \in \mathbb{R}$.
\end{center}

This map is a model for many physical phenomena. It describes Poincaré's surface of section of the kicked rotator, which consists of a stick that is free from gravitational forces, that rotates without friction around an axis located at one of its ends, and is periodically kicked at the other end.
The kicked rotator approximates systems studied in many fields of physics.
For example, circular particle accelerators accelerate particles by applying periodic kicks, as they circulate in the beam tube. Therefore, the structure of the trajectory of the beam can be approximated by the kicked rotator.
For a more detailed account of the development of the problem, see Lazutkin's work on it, and results on the existence of transverse homoclinic orbits for small values of the parameter, see \cite{Ge1999} and \cite{Ge2005}.

Since $f_{\mu}$ and $f_{-\mu}$ are conjugated and $f_0$ is trivial, we only consider parameters $\mu>0$.

For $\mu \neq 0$ there exist two fixed points, $p=(0,0)$ and $q =(\frac{1}{2},0)$.  
$p$ is always a saddle with positive eigenvalues and it is called \textit{the principal fixed point} of $f_\mu$.
$q$ is elliptic if $0 <\mu <4$ and a saddle with negative eigenvalues if $\mu>4$.

The main result of this article is the following.

\textbf{Theorem} 
\textit{If $\mu \neq 4$ then the standard map satisfies the following}:
\begin{enumerate}
    \item \textit{The four branches of $p=(0,0)$ have topologically transverse homoclinic points.
    \item $f_\mu$ has positive topological entropy.
    \item There exist periodic saddles with transverse homoclinic points}.
\end{enumerate}

This is Theorem \ref{entropy for the standard map} and section $5$ is all dedicated to its proof, which is based on the ideas developed in sections $2$, $3$ and $4$, and the use of the Inverse Function Theorem in a neighborhood of $p$.

In section $2$ we develop the basic ideas of the theories of the ideal completion of a surface $S$ and the prime ends compactification of a domain $U \subset S$.
The main idea is to add to $U$ a circle of prime ends corresponding to each "nontrivial" ideal boundary point of $U$, as a surface contained in $S$.

Ideal boundary points of a surface $S$ are also known as the topological ends of $S$, and we denote them by $b(S)$.
The structure of $b(S)$ and the ideal completion of $S$, $B(S)=S\sqcup b(S)$, are nicely described in Proposition \ref{model}. 
$B(S)$ is almost a surface, and except for some points of $b(U)$ that are accumulated by handles or Mobius bands, $B(S)$ is locally a surface.

Let $S$ be a surface and $U$ a domain of $S$. Every ideal boundary point $b \in b(U)$ leaves an impression in $S$, like a mark.
The set \[ Z(b):= \{ x \in S \mid \exists (x_n)  \text{ in } U \text{ with } {x_n} \rightarrow b \text{ in } B(U) \text{ and } x_n\rightarrow x \text{ in } S \} \] is called the \textit{impression} of $b$ in $S$. 
Roughly, it tells the accumulation in $S$ of sequences $(x_n)$ in $U$ that converge to $b$ in $B(U)$. 

Obviously $Z(b)$ is a subset of the frontier of $U$ in $S$, $fr_S U$.

We will be interested in replacing impressions by circles of prime ends, in the case where the impression $Z(b)$ is a compact subset of $S$ with more than one point. 
Is this case $b$ is called a regular ideal boundary point, and we denote the set of such points by $b_{reg} (U)$.

If $b \in b(U)$ and $Z(b)$ is a one point set contained in $S$, then $b$ corresponds to a puncture of $U$ in $S$, and we replace $Z(b)$ by one point.

We will work with domains $U$ such that $fr_S U$ is compact and has a finite number of connected components.
In this case there exist only a finite number of ideal boundary points $b \in b(U)$ such that $Z(b)$ is compact.
This follows from the following result (Proposition \ref{g+1 ideal boundary points with boundary}).

\textbf{Proposition} 
\textit{Let $K$ be a compact subset of a compact connected surface $S$ and $U$ a residual domain of $K$. If $K$ has $m$ connected components then $U$ has at most $m(g+1)$ ideal boundary points, where $g$ is the genus of $S$}.

As far as we know, this upper bound is new. With this result applied to $K=fr_S U$, it is possible to give an elementary proof of Lemma $2.3$ of \cite{Mather1981}, providing an upper bound for the number of regular ideal boundary points of $U$ in arbitrary surfaces.

If $S$ is a connected surface, $U$ is a domain of $S$ such that $fr_S U$ is compact and has a finite number of connected components, then $b_{reg} (U)$ is a finite set. 
$U$ could have infinite genus and any orientability type. 
In this context, it is possible to defined the prime ends compactification $E(U)$ of $U$.
For each $b \in b_{reg} (U)$ we replace $Z(b)$ by a circle of prime ends and obtain a surface with compact boundary $\hat{U}$ and $E(U)$ is homeomorphic to the ideal completion of $\hat{U}$, $B(\hat{U})$. 
Our definition of $E(U)$ is the same as the one given by Mather in \cite{Mather1982}, except that we add the condition that the end cuts be pairwise disjoint \textit{compact} subsets of $S$.
All of Mather's results extend to this more general context.
These results have already been written, but this general approach made the article extremely long.

Therefore we chose to simplify and assume compactness of the initial surface $S$ and smoothness of maps in order to focus on other important ideas.

Let $S$ be a compact surface and $U$ a domain of $S$ such that $fr_S U$ has a finite number of connected components. It follows that $b(U)$ is a finite set (see Proposition \ref{finitely many components}).
In this situation it is possible to define the prime ends compactification of $U$ following Mather \cite{Mather1982}.

We replace the impression $Z(b)$ of each $b \in b(U)$ with a circle of prime ends $C(b)$ or a point, depending on whether $b$ is regular or not.
In this way, the prime ends compactification of $U$, $E(U)$, is obtained as a compact surface with boundary, whose boundary components are the circles of prime ends $C(b)$, where $b$ ranges over all points of the set of regular ideal boundary points of $U$.
Each prime end represents a way of approaching points of $Z(b) \subset fr_S U$, as we move out of $U$ in the direction of $b$.

We would like to start a description of the other relevant result of this paper, which is in section 3.
We study area preserving diffeomorphisms of compact surfaces $f:S\rightarrow S$ and the structure of the frontier $fr_S U$ of invariant domains $U$, such that $fr_S U$ has a finite number of components. The main problem is to describe the structure of $fr_S U$ and the dynamics of $f$ restricted to $fr_S U$.

Consider the simplest case where $U$ is homeomorphic to an open disk. 
In this case $b(U)$ consists of one ideal boundary point, $Z(b)=fr_S U$, $B(U)$ is homeomorphic to a sphere and $E(U)$ is homeomorphic to a closed disk.
We have the following result (Corollary \ref{Corollary 2 the frontier of domains with fixed prime ends}).

\textbf{Proposition}
\textit{Let $S$ be a compact connected orientable surface and $f$ an area preserving orientation preserving $C \e 1$ diffeomorphism of $S$. Let $U$ be an invariant set homeomorphic to an open disk and suppose that all fixed points of $f$ in $fr_S U$ are non degenerate}. 

\textit{If there exists a fixed prime end $e \in C(b)$ then $fr_S U$ is a connected finite union of connections and the corresponding saddle fixed points}.

Each invariant manifold of a saddle has two branches. A \textit{connection} is a branch contained in two invariant manifolds. It connects the saddles (that may coincide).

This result can be seen as a two dimensional analogue of the dynamics of orientation preserving homeomorphisms of the circle with fixed points.
In this case all periodic points are fixed and the arcs between fixed points are connections in the sense that if $x$ belongs to one of these arcs, then $lim_{n \rightarrow \pm \infty}{f \e n (x)}$ are the end points points of the arc.

Of course we have the same results if the homeomorphism on the circle of prime ends has rational rotation number.

Before we proceed, note that a homeomorphism $f:S \rightarrow S$ induces homeomorphisms $f_* : B(U) \rightarrow B(U)$ and $\hat{f}:E(U) \rightarrow E(U)$. If $f$ is orientation preserving, then so is $\hat{f}$.

The second relevant result of this paper is the following (Theorem \ref{The frontier of domains with fixed prime ends}).

\textbf{Theorem}
\textit{Let $S$ be a compact connected orientable surface and $f:S \rightarrow S$ an area preserving orientation preserving $C \e 1$ diffeomorphism of $S$.
Assume that $U$ is an invariant domain of $S$ such that $fr_S{U}$ has a finite number of connected components}. 

\textit{Let $b$ be a regular ideal boundary point of $U$ such that $f_*(b)=b$ and $\hat{f}:C(b) \rightarrow C(b)$ be the homeomorphism on the corresponding circle of prime ends.
Suppose that all fixed points of $f$ in $Z(b)$ are non degenerate}. 

\textit{Assume that there exists a fixed prime end $e \in C(b)$}.

\textit{Then $C(b)$ has a finite number of fixed prime ends and there exists a finite singular covering $ \phi :C(b) \rightarrow Z(b)$, which is a semiconjugacy between the mapping of prime ends on $C(b)$ and the restriction of $f$ to $Z(b)$.}

\textit{Furthermore, if $p$ is the principal point of a fixed prime end then $p$ is a fixed point of saddle type, and $Z(b)$ is the connected union of finitely many saddle connections and the corresponding saddles}.

This is a result about only one circle of prime ends. 
If all ideal boundary points of $U$ are regular and all circles of prime ends of $U$ have a fixed point (or more generally, rational rotation number), then the whole frontier of $U$ is a finite union of connections.

Many articles, which deal with related results, use the hypothesis that elliptic periodic points that belong to $fr_S U$ and arise as principal points of periodic prime ends be Moser stable. 
This was the way to get rid of them, which is no longer necessary: the principal point of a fixed prime end can not be elliptic. 

In section $4$ we present some results about recurrence and the existence of homoclinic points for area preserving diffeomorphisms of compact surfaces. 

If $L$ is an invariant branch of a saddle $p$ and all fixed points of $f$ contained in $cl_{S}L$ are non degenerate, then either $L$ is a connection or $L$ accumulates on both sectors adjacent to itself.
From this we prove the following.

\textbf{Proposition}
\textit{Let $S$ be the sphere $S \e 2$ or the torus $T \e 2$ and $f$ an area preserving orientation preserving $C \e 1$ diffeomorphism of $S$. Let $p$ be a saddle fixed point of $f$ and suppose that the branches of $p$ are invariant sets that are not connections. Assume that all fixed points of $f$ contained in $cl_{S}(W_p \e u \cup W_p \e s)$ are non degenerate}. 

\textit{Then all pairs of adjacent branches of $p$ intersect}.

What we do in Section $5$ is to prove that some of these intersections are topologically transverse.

Note that this proposition and the entire article only deal with fixed points. 
The hypothesis are made only about fixed points and we are not allowed to freely take powers of diffeomorphisms.
We try to make all hypotheses as weak as possible.
This makes results more easily applicable to specific examples.

This work is a continuation of \cite{Ol2}.

In \cite{Ol2} we proved that for all parameter values, except one, the principal fixed point of the Standard Map has homoclinic points. The proof of this fact is simple and takes only a few lines.
The main result of this article is to show that for all parameter values, except one, the Standard Map has \textbf{transverse} homoclinic points.

Theorem \ref{The frontier of domains with fixed prime ends} is the other relevant result of this article. Item $(1)$ of Theorem \ref{The frontier of domains with fixed prime ends} appears in \cite{Ol2}, but it gives information only about fixed points in the frontier of invariant domains. Here we generalize this by giving information on the structure of the whole frontier of invariant domains.

Every time we state a result contained in \cite{Ol2}, we make this very clear in the text.

We also avoid including proofs of new results that appeared in \cite{Ol2}. 
The only place we do this is at the beginning of the proof of Theorem \ref{The frontier of domains with fixed prime ends}.
Instead of including the original proof of item $(1)$ that appeared in \cite{Ol2}, we removed all calculations, long sequences of inequalities and reorganized and summarized the argument, in order to give only a very clear exposition of the structure of the proof. 
The main reason for doing this is that \cite{Ol2} is just a preprint published in the arXiv, where papers do not undergo rigorous peer review.

We avoid text overlap as much as possible.
If this happens, it is only in cases of definitions or summaries of the work of other mathematicians that we need to use.
For example, the first page of subsection $2.6$ is a summary of Mather's theory of prime ends, and a very similar text appears in \cite{Ol2}. 
I was responsible for writing the summary that appeared in \cite{Ol2}.  
If I were to write this summary again, as if for the first time, I would probably write more or less the same, simply because, in my opinion, it is the best way to present Mather's theory.

\subsection{Notation}
In this article, $S$ will denote a smooth connected boundaryless surface, provided with a measure $\mu$, which is finite on compact sets and positive on open sets.
We say that a homeomorphism $f:S \rightarrow S$ is measure preserving if $\mu \big(f \e {-1} (E) \big) = \mu (E)$ for every Borel subset $E$ of $S$.

From a certain point onwards we will assume hypotheses such as the compactness of $S$, its orientability, that $f$ is a $C \e 1$ diffeomorphism and that $f \e * \eta =\eta$, for some non degenerate $2$-form on $S$. In this case, we will still denote by $\mu$ the invariant measure induced by $\eta$.

By a domain we mean an open connected subset of $S$. 
If $A \subset B$, we use the notation $int_{B} (A)$, $cl_{B} (A)$ and $fr_{B} (A)$ for the interior, closure and the frontier of $A$ in $B$, respectively. 
By a disk (open, closed) we mean a set homeomorphic to the disk (open, closed) $D$ in the Euclidean norm of $\mathbb R \e 2$, and by a circle the analogue.
If $K$ is a compact subset of $S$, a connected component of $S \setminus K$ is called a \textit{residual domain} of $K$. 
The boundary of a surface will be denoted by $\partial S$ and we will use $S \e \circ = S \setminus \partial S$  for its interior as a manifold.

Most of the time, we refer to the \textit{connected components} of a topological space simply as \textit{components}.  When referring to an \textit{ideal boundary component}, the three words will always appear together.

\subsection{A note on earlier announcements of Theorem \ref{The frontier of domains with fixed prime ends}}

Theorem \ref{The frontier of domains with fixed prime ends} was announced by this author in 2009/2010 at several conferences and seminars in different countries, including the \textit{International Conference on Dynamical Systems – 2010, Celebrating the 70th birthday of Jacob Palis}, Búzios, Rio de Janeiro, from 02/25 to 03/05, 2010. See \cite{Impa} for information about speakers and titles of their talks at the Conference.
Unfortunately, for family reasons, it is only now that we have been able to write the results.

In 2018, this result was announced again in \cite{KoNa}. 
In their announcement they mention my previous announcement as "a similar result". See Theorem 1.4 and the paragraph following it in \cite{KoNa}.

They state Theorem \ref{The frontier of domains with fixed prime ends} with slightly different assumptions but with exactly the same thesis. 

Their result is stated only for domains of $\mathbb{R} \e 2$ homeomorphic to an open disk, and the generalizations they claim to have are only for maps homotopic to the identity (see the last paragraph of Section 2.4 of  \cite{KoNa}). 

On the other hand, we have no restrictions on the action on homology.

They are never clear about what kind of surfaces and periodic points they can handle.

There is one important point: \textit{as far as we know, so far they have not published the result and have never shown a complete argument or even a preprint}.

We feel like there is no need to dispute authorship. We will be very happy to see \cite{KoNa} published with the result proven with different hypotheses, especially those related to the degeneration of fixed points.





\section{Topological preliminaries}

\subsection{The ideal completion of a surface}

We are going to describe a compactification of $S$ by the addition of its ends or ideal boundary points. See \cite{AhSa}, \cite{Ma}, \cite{Mather1981},  and \cite{Ri}.

We start by considering  decreasing sequences $(P_{n})_{n\geq k}$
of non-empty open connected subsets of $S$ ($P_{n}\supset P_{n+1}\:\forall\:n\geq k)$ such that $fr_{S}P_{n}$ is compact for every $n\geq k$.

Let $(C_{n})$ be a decreasing sequence of subsets of $S$. 
We say
that $(C_{n})$ \emph{leaves compact subsets} of $S$ if for every
compact subset $K$ of $S$ there exists $n_{0}$ such that $K\cap C_{n}=\emptyset$ for $n\geq n_{0}$.

Let $(P_{n})$ be a decreasing sequence of non-empty open connected subsets of a surface $S$ such that $fr_{S}P_{n}$ is compact for
every $n$. 
It is easy to see that the following conditions are equivalent:
\begin{itemize}
\item $(P_{n})$ leaves compact subsets of $S$ .
\item $\cap_{n}cl_{S}P_{n}=\emptyset$.
\end{itemize}

\begin{Definition}
An \emph{ideal boundary component} of $S$ (IBC) is a decreasing sequence
$(P_{n})$ of non-empty open connected subsets of $S$ such that $fr_{S}P_{n}$ is compact for every $n$ and $(P_{n})$ satisfies one of the two equivalent conditions above.
\end{Definition}

The following example shows that there exist decreasing sequences
of open connected subsets $(P_{n})$ such that $fr_{S}P_{n}$ is compact
for every $n$ and $\cap_{n}P_{n}=\emptyset$, but $\cap_{n}cl_{S}P_{n}\neq\emptyset$
and $(P_{n})$ does not leave compact subsets of $S$. This shows
that we can not weaken the condition $\cap_{n}cl_{S}P_{n}=\emptyset$
in the definition of IBCs to $\cap_{n}P_{n}=\emptyset$.

\begin{Example}
Let us consider polar coordinates $(\theta,r)$ on $S=\mathbb{R}^{2}$
and define $P_{n}=\{(\theta,r)\in S\thinspace|\thinspace r>n\}\cup\{(\theta,r)\in S\thinspace|\,0<\theta<\frac{1}{n}\}$.
The sequence $(P_{n})$ is decreasing, $fr_{S}P_{n}$ is compact for
every $n$ , and $\cap_{n}P_{n}=\emptyset$. On the other hand, $\cap_{n}cl_{S}P_{n}=\{(\theta,r)\in S\thinspace|\thinspace r\geq0\;\mathrm{and}\;\theta=0\}$
and $(P_{n})$ does not leave circles centered at $(0,0)$.
\end{Example}

Let $(P_{n})$ and $(Q_{m})$ be ideal boundary components of $S$.
We say that $(P_{n})$ and $(Q_{m})$ are \emph{equivalent} if for
every $n$ there is $m$ such that $P_{n}\subset Q_{m}$ and vice
versa. 
An \emph{end} or \emph{ideal boundary point} of $S$ is an
equivalence class of ideal boundary components. 
The set of ideal boundary points, denoted by $b(S)$, is called the \emph{ideal boundary} or \textit{the set of ends} of $S$, and the disjoint union $B(S):=S\sqcup b(S)$ is called the \emph{ideal completion} or \textit{the ends compactification} of $S$. 

Let $V$ be an open subset of $S$. We define $V'$ as the set of
ideal boundary points of $S$ whose representing IBCs $(P_{n})$ satisfy
$P_{n}\subset V$ for some $n$. Let $V^{*}:=V\sqcup V'$. Obviously $V'\cap W'=(V\cap W)'$ and $V^{*}\cap W^{*}=(V\cap W)^{*}$.
On $b(S)$ and $B(S)$ we are going to consider the topologies generated by the bases of sets formed by $V'$ and $V^{*}$, respectively, where $V$ ranges over the collection of all connected open subsets of $S$ with $fr_{S}V$ compact.

We are going to write $b=[(P_{n})]$ or just $b=(P_{n})$ to say that
$(P_{n})$ is an IBC representing $b\in b(S)$.

A characterization of $B(S)$ as a compactification of $S$ is given by the following result:

\begin{Proposition}

$B(S)$ is a compactification of $S$ that satisfies the following properties:
\begin{enumerate}
        \item $B(S)$ is a locally connected Hausdorff space.
        \item $b(S)$ is totally disconnected.
        \item $b(S)$ is non separating on $B(S)$ (meaning that for any open connected subset $V$ of $B(S)$, $V \setminus b(S)$ is connected). 
\end{enumerate}

If $M$ is another compactification of $S$ that satisfies these properties, then there exists a homeomorphism from $M$ onto $B(S)$ which is the identity on $S$.
\end{Proposition}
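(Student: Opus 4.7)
First I would verify that $B(S)$ itself satisfies (1)–(3). Local connectedness and non-separation come directly from the choice of basis: the sets $V^{*}$ (for $V \subset S$ open connected with $fr_{S}V$ compact) are themselves open and connected in $B(S)$, and $V^{*} \setminus b(S) = V$ is connected by construction, giving (1) and (3). For the Hausdorff property, two distinct ideal boundary points $b = [(P_{n})]$ and $b' = [(Q_{m})]$ can be separated by passing to indices at which $P_{n}$ and $Q_{m}$ are disjoint, which is possible because inequivalent IBCs admit such refinements; an ideal point $b$ is separated from a point $x \in S$ using that $(P_{n})$ leaves compact sets, so $x \notin cl_{S}P_{n}$ eventually. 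Total disconnectedness of $b(S)$ holds because each $P_{n}'$ is clopen in $b(S)$ and $\bigcap_{n} P_{n}' = \{b\}$.

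\textbf{Uniqueness.} Given another compactification $M$ satisfying (1)–(3), the plan is to build a homeomorphism $\Psi : M \to B(S)$ which is the identity on $S$. For each $m \in M \setminus S$, I would produce a decreasing sequence $(U_{n})$ of connected open neighborhoods of $m$ in $M$ with $\bigcap_{n} cl_{M}U_{n} = \{m\}$ and, crucially, with $fr_{M}U_{n} \subset S$. Setting $P_{n} := U_{n} \cap S$, property (3) applied to the connected open set $U_{n}$ of $B(S)$ (played here by $M$) gives that $P_{n}$ is connected, and the condition $fr_{M}U_{n} \subset S$ guarantees that $fr_{S}P_{n} = fr_{M}U_{n}$ is compact. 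Since $\bigcap_{n} cl_{M}U_{n} = \{m\} \subset M \setminus S$, the sequence $(P_{n})$ leaves compact subsets of $S$, hence defines an ideal boundary point $\Psi(m) \in b(S)$. Independence of the choice of $(U_{n})$ and injectivity of $\Psi$ follow from Hausdorff separation in $M$; surjectivity is obtained by taking any IBC $(P_{n})$ of $S$ and observing that $\bigcap_{n} cl_{M}P_{n}$ is a nonempty compact subset of $M \setminus S$ that total disconnectedness together with the construction above forces to be a single point. Continuity is then checked on the basis $\{V^{*}\}$, and a continuous bijection between compact Hausdorff spaces is automatically a homeomorphism.

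\textbf{Main obstacle.} The step I expect to be the technical heart of the argument is the construction of the neighborhoods $U_{n}$ of $m$ with $fr_{M}U_{n} \subset S$, that is, whose topological frontiers in $M$ avoid the rest of the ideal boundary. The strategy is to combine the total disconnectedness of $b(S)$ inside $M$ with the local connectedness of $M$, thereby obtaining the compactification analogue of the fact that in a compact Hausdorff totally disconnected space, clopen sets form a neighborhood basis. Concretely, starting from any small connected open neighborhood $W \ni m$ with compact closure, I would use normality of $M$ to separate $\{m\}$ from the compact set $fr_{M}W$ off any other prescribed point of $b(S)$, and then refine by taking the connected component of $m$ inside $W$ minus a suitable separating clopen-in-$b(S)$ subset. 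The non-separating hypothesis (3) is what allows this refinement to remain a domain of $S$ after intersecting with $S$, closing the loop and producing the required $(U_{n})$.
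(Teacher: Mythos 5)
The paper offers no proof of this Proposition at all: it is quoted from Ahlfors--Sario (sections 36--37 of Chapter 1), so there is nothing internal to compare against. Your proposal reconstructs what is essentially the standard argument from that reference, and the overall architecture is sound. In particular, the part you single out as the technical heart of uniqueness is exactly right: since $M\setminus S$ is compact and totally disconnected it is zero-dimensional, so a clopen-in-$M\setminus S$ neighborhood of $m$ and its complement can be separated by disjoint open sets of $M$ via normality, the frontier of the first then misses $M\setminus S$ entirely, and local connectedness lets you pass to the component of $m$; property (3) applied in $M$ then makes $P_n=U_n\cap S$ connected. The Hausdorff and total-disconnectedness verifications are also correct as sketched (the key fact, which you gesture at, is that for large $m$ the connected set $Q_m$ misses the compact frontier of $P_n$ and hence lies entirely inside or entirely outside $P_n$).

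Two points are genuinely missing rather than merely compressed. First, the statement asserts that $B(S)$ \emph{is a compactification}, and you never verify that $B(S)$ is compact (density of $S$ is immediate, compactness is not). This requires an actual argument, e.g.\ exhibiting $b(S)$ as an inverse limit of the finite sets of components of $S\setminus F_n$ for an exhaustion $(F_n)$, and then showing that once finitely many basic sets cover $b(S)$ the leftover part of $S$ is compact; you cannot instead deduce compactness of $B(S)$ from the map $\Psi:M\to B(S)$, since the existence half must stand on its own. Second, property (3) is quantified over \emph{all} open connected subsets $V$ of $B(S)$, but you only check it for the basic sets $V^{*}$, for which $V^{*}\setminus b(S)=V$ is connected by fiat. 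For general $V$ one must argue that a separation $V\cap S=A\sqcup B$ propagates to a separation of $V$ (each ideal point of $V$ has a basic connected neighborhood inside $V$ whose trace on $S$ falls entirely into $A$ or entirely into $B$, consistently over different choices), and this general form is the one your own uniqueness argument consumes when it invokes (3) for the non-basic sets $U_n$. Both gaps are fixable by standard means, but as written the existence half of the Proposition is not established.
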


For a proof see sections 36 and 37 of chapter 1 of \cite{AhSa}.



\subsubsection{Exhaustions}

Now we would like to describe a practical method for calculating ideal boundary points.

An \textit{exhaustion} of $S$ is an increasing sequence $(F_n)$ of compact connected bordered surfaces contained in $S$, such that for every $n$ we have the following:

\begin{itemize}
        \item $F_n \subset F_{n+1} \e \circ = int_S{F_{n+1}}$.
        \item $S = \cup _{n}F_{n}$.
        \item If $W$ is a connected component of $S \setminus F_{n}$, then $cl_{S}W$ is a non compact bordered surface whose boundary consists of exactly one connected component of $\partial F_{n}$ (one circle).
\end{itemize}

We have the following existence result.

\begin{Proposition}
Every non compact connected surface admits an exhaustion.
\end{Proposition}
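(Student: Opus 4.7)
The plan is to first produce any exhaustion of $S$ by compact connected bordered subsurfaces, then perform a finite surgery on each one to enforce the third condition. Since a connected non-compact $2$-manifold is $\sigma$-compact, we may choose compact sets $K_1 \subset K_2 \subset \cdots$ with $K_n \subset K_{n+1}^\circ$ and $\bigcup_n K_n = S$. By Rado's theorem $S$ is triangulable; taking a regular neighborhood of a finite subcomplex containing $K_n$, and adding finitely many arcs in $S$ to connect its components if necessary, yields compact connected bordered subsurfaces $G_n$ with $K_n \subset G_n \subset G_{n+1}^\circ$ and $\bigcup_n G_n = S$. This is the ``rough'' exhaustion, which need not satisfy the one-circle condition.

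To upgrade $G_n$ to an $F_n$ satisfying the third condition, I would perform two surgeries in the complement of $G_n$. First, since $fr_S G_n$ is compact, $S \setminus G_n$ has only finitely many connected components; absorb into $G_n$ each one that is relatively compact in $S$. Call the result $G_n'$: it is still a compact connected bordered subsurface, and now every component $W$ of $S \setminus G_n'$ is non-compact. Second, iterate the following surgery: while some component $W$ of the current complement contains $\ge 2$ circles of the current boundary in $cl_S W$, choose an embedded arc $\alpha$ in $cl_S W$ joining two such circles together with a closed regular neighborhood (strip) $N$ of $\alpha$ in $cl_S W$, and add $N$ to the current subsurface. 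Each step preserves compactness, connectedness, and the bordered-subsurface property.

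The main obstacle is the topological verification that this surgery process terminates and produces the desired $F_n$. The key observation is that attaching a strip between two distinct boundary circles of the current subsurface merges them into a single boundary circle on the complement side; writing $b$ for the number of boundary circles and $k$ for the number of complementary components, each surgery strictly decreases the non-negative integer $b - k$, by $1$ if the strip does not separate $W$ and by $2$ if it does. Since $b - k$ is initially finite and cannot drop below $0$, the process terminates in finitely many steps with a compact connected bordered subsurface $F_n$ each component of whose complement is non-compact and bounded by exactly one circle of $\partial F_n$. Carrying out the surgery for $F_{n+1}$ inside $S \setminus G_{n+1}^\circ$ (which is disjoint from $F_n$) preserves the nesting $F_n \subset F_{n+1}^\circ$, and $\bigcup_n F_n = S$ follows from $F_n \supset G_n$; this produces the required exhaustion.
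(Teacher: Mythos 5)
The paper offers no proof of this proposition (it only cites Theorem 29A of Ahlfors--Sario), and your construction is essentially the standard one behind that citation: take a rough exhaustion by compact connected bordered subsurfaces, absorb the relatively compact complementary components, and band together boundary circles lying on a common complementary component until each such component sees exactly one circle. The strategy and the termination count via $b-k$ are sound, but two steps need repair. The first is the nesting. You justify $F_n \subset F_{n+1}^\circ$ by asserting that $S \setminus G_{n+1}^\circ$ is disjoint from $F_n$, i.e.\ that $F_n \subset G_{n+1}^\circ$. That is false in general: $F_n$ contains the absorbed components of $S \setminus G_n$ and the bands, neither of which was chosen with any reference to $G_{n+1}$; a band joining two boundary circles of $G_n'$ across a non-compact complementary component can cross $\partial G_{n+1}$ or leave $G_{n+1}$ entirely. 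The standard fix is to make the construction inductive: having built the compact set $F_n$, choose $m$ with $F_n \cup K_{n+1} \subset G_m^\circ$ and run the absorption and banding on $G_m$ to produce $F_{n+1}$; since the surgery only enlarges $G_m$, this gives $F_n \subset G_m^\circ \subset F_{n+1}^\circ$.

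The second issue is the separating case of your surgery. If the strip could separate $W$, one of the two resulting complementary components might be relatively compact, which would defeat your concluding claim that every component of $S \setminus F_n$ is non-compact --- and that claim is part of the definition of an exhaustion in this paper --- unless you re-ran the absorption step and restarted the count. You should observe instead that the separating case never occurs: a properly embedded arc $\alpha$ joining two \emph{distinct} boundary circles $C_1$ and $C_2$ of $cl_S W$ cannot disconnect it, since the two sides of a bicollar of $\alpha$ are joined by a path running along the connected arc $C_1 \setminus \alpha$ inside a collar of $C_1$, and every component of the cut surface must meet the bicollar. Hence each surgery leaves $k$ fixed, drops $b$ by exactly $1$, merges $C_1$ and $C_2$ into a single boundary circle, and removes only a compact band from the non-compact set $W$, so all complementary components stay non-compact. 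With these two repairs the argument is complete.
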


For a proof see Theorem of section 29A of chapter 1 of \cite{AhSa}.
Now we would like to state another well known result.

\begin{Proposition} \label{special components}
Let $(F_{n})$ be an exhaustion of a non compact connected surface $S$. 
Then every ideal boundary component of $S$ is equivalent to another of the form $(W_{n})$, where $W_{n}$ is a connected component of $S \setminus F_{n}$.
\end{Proposition}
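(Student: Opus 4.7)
The plan is to construct a sequence $(W_n)$ of the required form directly from the given IBC $(P_k)$ by letting $W_n$ be the component of $S \setminus F_n$ that eventually contains the tail of $(P_k)$, and then to verify that $(W_n)$ is itself an IBC equivalent to $(P_k)$. Since $(P_k)$ leaves compact subsets of $S$ and each $F_n$ is compact, for each $n$ one can choose an integer $k(n)$, with $k(n)$ monotonically increasing, so that $P_{k(n)} \cap F_n = \emptyset$. The set $P_{k(n)}$ is nonempty, open, and connected, so it is contained in a unique connected component $W_n$ of $S \setminus F_n$.

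To verify that $(W_n)$ is an IBC, observe first that $W_{n+1}$ is connected and contained in $S \setminus F_{n+1} \subset S \setminus F_n$, and that $W_{n+1} \cap W_n \supset P_{k(n+1)} \neq \emptyset$; hence $W_{n+1}$ lies in the component $W_n$ of $S \setminus F_n$, and the sequence is decreasing. By the third property in the definition of exhaustion, $fr_S W_n$ is a single boundary circle inside $\partial F_n$, and so is compact. Finally, given any compact $K \subset S$, choose $n_0$ with $K \subset F_{n_0}$; then for $n \geq n_0$ we have $W_n \subset S \setminus F_n \subset S \setminus K$, so $(W_n)$ leaves compact subsets of $S$ and is therefore an IBC.

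For the equivalence with $(P_k)$, the inclusion $P_{k(n)} \subset W_n$ is built into the construction, which handles one direction. In the other direction, fix $n$; since $fr_S P_n$ is compact, there exists $m_0$ with $fr_S P_n \subset F_{m_0}$. For any $m \geq m_0$, every component $V$ of $S \setminus F_m$ is disjoint from $fr_S P_n$, so $V \cap P_n$ is both open and closed in the connected set $V$, forcing $V \subset P_n$ or $V \cap P_n = \emptyset$. Choosing $m$ large enough that additionally $k(m) \geq n$, the component $W_m$ contains $P_{k(m)} \subset P_n$, hence meets $P_n$ and is therefore contained in $P_n$. The main obstacle is precisely this dichotomy: a priori a component of $S \setminus F_m$ could straddle $fr_S P_n$, but the compactness of $fr_S P_n$ together with the exhaustion property forces a clean inside/outside split for all sufficiently large $m$, which is what allows the two sequences to nest into each other and yields the equivalence.
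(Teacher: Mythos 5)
Your proof is correct and follows exactly the route the paper sketches (the paper only remarks that the proof "is a matter of showing that any ideal boundary component is equivalent to another whose sets are components of the complement of the surfaces of an exhaustion"); you supply the details soundly, including the key open-and-closed dichotomy that forces $W_m \subset P_n$ for large $m$. No gaps.
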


The proof is not difficult. 
It is a matter of showing that any ideal boundary component of $S$ is equivalent to another, where the sets are components of the complement of the surfaces of an exhaustion.

We would like to emphasise the following simple consequences of Proposition \ref{special components}.

\begin{Corollary}
Every ideal boundary point of $b \in b(S)$ can be represented by an IBC $(W_{n})$, where $fr_S{W_n}$ is a circle.

\end{Corollary}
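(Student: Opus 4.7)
The plan is to combine the existence of exhaustions with Proposition \ref{special components}. If $S$ is compact then $b(S) = \emptyset$ and the statement is vacuous, so I would first reduce to the case where $S$ is non-compact. By the existence result for exhaustions I can fix an exhaustion $(F_n)_{n \geq 1}$ of $S$, and Proposition \ref{special components} then produces, for the given ideal boundary point $b = [(P_n)]$, an equivalent IBC $(W_n)$ where each $W_n$ is a connected component of $S \setminus F_n$. This representative is already decreasing and has compact frontier (since $fr_S W_n \subset \partial F_n$ and $\partial F_n$ is compact), so the only thing left to verify is that $fr_S W_n$ is in fact a single circle.

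To check this, I would appeal directly to the third defining property of an exhaustion: $cl_S W_n$ is a non-compact bordered surface whose manifold boundary is exactly one connected component $C$ of $\partial F_n$, which is a circle. Since $W_n$ is open in $S$ and disjoint from $\partial F_n$, one has $fr_S W_n = cl_S W_n \setminus W_n \subset \partial F_n$, and the fact that $cl_S W_n$ is a bordered surface with boundary $C$ forces $fr_S W_n = C$. This gives the desired representative. There is no real obstacle here; the only subtlety is the identification of the topological frontier $fr_S W_n$ with the manifold boundary of $cl_S W_n$, but this is immediate because $W_n$ is an open component of the open set $S \setminus F_n$, so no boundary points of $F_n$ lying in $cl_S W_n$ can belong to $W_n$ itself.
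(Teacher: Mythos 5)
Your proof is correct and follows exactly the route the paper intends: the Corollary is stated as an immediate consequence of Proposition \ref{special components} together with the third defining property of an exhaustion, which is precisely the combination you use. The identification of $fr_S W_n$ with the single boundary circle of $cl_S W_n$ is handled correctly, so nothing is missing.
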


\begin{Corollary}\label{only finitely many ideal boundary points}
If for every $n$ the number of connected components of $S \setminus F_{n}$ is less than or equal $k$ then $S$ has at most $k$ ideal boundary points.
\end{Corollary}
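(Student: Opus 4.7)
The plan is to argue by contradiction: suppose $S$ has at least $k+1$ distinct ideal boundary points $b_1,\dots,b_{k+1}$, and use Proposition~\ref{special components} to represent each of them by an IBC whose terms are components of $S\setminus F_n$. Then I would seek a single index $n$ at which the representing components are pairwise distinct, which would contradict the hypothesis that $S\setminus F_n$ has at most $k$ components.

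The first step is to make the assignment $b\mapsto W_n(b)$ well defined. By Proposition~\ref{special components}, each $b_i$ admits a representative $(W_n^{(i)})_{n}$ with $W_n^{(i)}$ a component of $S\setminus F_n$. I would check that if $(W_n)$ and $(W_n')$ are two such representatives of the same $b$, then $W_n=W_n'$ for each $n$: equivalence gives some $m$ with $W_m\subset W_n'$, and since $(W_n)$ is decreasing we may assume $m\ge n$ so that $W_m\subset W_n\cap W_n'$; because $W_m$ is non-empty and connected, it lies in a unique component of $S\setminus F_n$, forcing $W_n=W_n'$.

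The key step is the following claim: if $b_i\neq b_j$, then $W_n^{(i)}\neq W_n^{(j)}$ for all sufficiently large $n$. I would prove its contrapositive. Suppose $W_{n_\ell}^{(i)}=W_{n_\ell}^{(j)}$ for an increasing sequence $n_\ell\to\infty$. Given any $n$, choose $\ell$ with $n_\ell\ge n$; then monotonicity of the representing sequences yields
\[
W_{n_\ell}^{(i)}=W_{n_\ell}^{(j)}\subset W_n^{(j)},
\]
so that $W_m^{(i)}\subset W_n^{(j)}$ for $m=n_\ell$. By symmetry one obtains the reverse inclusion statement, so the two IBCs are equivalent, contradicting $b_i\neq b_j$.

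Applying this claim to each of the finitely many pairs $\{i,j\}\subset\{1,\dots,k+1\}$, I obtain a single $n_0$ such that for all $n\ge n_0$ the elements $W_n^{(1)},\dots,W_n^{(k+1)}$ are pairwise distinct components of $S\setminus F_n$. This contradicts the hypothesis that $S\setminus F_n$ has at most $k$ components, completing the proof. The only delicate point is the well-definedness and equivalence argument above; everything else is pigeonhole and the monotonicity of the representing sequences.
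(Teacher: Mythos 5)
Your proof is correct and follows the route the paper intends: the Corollary is stated as an immediate consequence of Proposition~\ref{special components}, and your argument (representing each ideal boundary point by a decreasing sequence of components of $S\setminus F_n$, showing distinct points yield eventually distinct components, then applying the pigeonhole principle at a single large $n$) is exactly the natural way to fill in the details the paper omits.
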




\subsubsection{Kerékjártó's Theorem}

This is a result that gives necessary and sufficient conditions for connected boundaryless surfaces to be homeomorphic.

We say that $S$ has \textit{finite genus} if there exists a compact bordered surface $K$ contained in $S$ such that $S \setminus K$ is homeomorphic to a subset of the plane. 
In this case the genus of $S$ is defined to be the genus of $K$. 
Otherwise we say that $S$ has \textit{infinite genus}.
The genus of a connected surface may also be defined as is the maximum number of disjoint simple closed curves that can be embedded in the surface without disconnecting it.  

Now we are going to define four types of orientability for a non compact surface.
Assume that $S$ is not orientable. 
$S$ is \textit{finitely non orientable} if there exists a compact bordered surface $K$ contained in $S$ such that $S \setminus K$ is orientable. Otherwise we say that $S$ is \textit{infinitely non orientable}. 
Every compact non orientable surface is the connected sum of a compact orientable surface and one or two projective planes. 
If $S$ is finitely non orientable and $S \setminus K$ is orientable, we say that $S$ is of \textit{odd} or \textit{even} non orientability type according to $K$ is the connected sum of a compact orientable surface and one or two projective planes, respectively.

Let $b=(P_n) \in b(S)$. 
We say that $b$ is \textit{planar} if some $P_n$ is homeomorphic to a subset of the plane. 
Otherwise, $b$ is called non planar, and the set of non planar ideal boundary points of $S$ is denoted by $b'(S)$. 
If $b \in b'(S)$ then the genus of $P_n$ is infinite for every $n$.

We say that $b$ is orientable if some $P_n$ is orientable.
Otherwise, $b$ is called non orientable, and the set of non orientable ideal boundary points is denoted by $b''(S)$.
If $b \in b''(S)$ then $P_n$ is infinitely non orientable for every $n$.

Obviously $b(S) \supset b'(S) \supset b''(S)$.

\begin{Proposition} \label{Kerekjarto}
(Kerékjártó) Let $S_{1}$ and $S_{2}$ be two non compact connected surfaces which have the same genus and the same orientability type. Then $S_{1}$ and $S_{2}$ are homeomorphic if and only if there exists a homeomorphism of $b(S_{1})$ onto $b(S_{2})$, such that $b'(S_{1})$ and $b''(S_{1})$ are mapped onto $b'(S_{2})$ and $b''(S_{2})$, respectively.
\end{Proposition}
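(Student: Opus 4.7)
I would prove the only non-trivial direction, sufficiency; necessity is immediate because any homeomorphism $f: S_1 \to S_2$ extends to $B(S_1) \to B(S_2)$ by $(P_n) \mapsto (f(P_n))$ and clearly preserves genus, orientability type, planarity of ends, and orientability of ends. For sufficiency, I would fix exhaustions $(F_n^1)$ and $(F_n^2)$ of $S_1$ and $S_2$, given by the existence proposition in Section 2.1.1, and the hypothesized bijection $\theta: b(S_1) \to b(S_2)$ with $\theta(b'(S_1)) = b'(S_2)$ and $\theta(b''(S_1)) = b''(S_2)$. The strategy is to build a homeomorphism $S_1 \to S_2$ inductively as a coherent direct limit along these exhaustions.

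First I would refine (pass to subsequences of) both exhaustions to obtain, for every $n$, a bijection $\beta_n$ between the components of $S_1 \setminus F_n^1$ and the components of $S_2 \setminus F_n^2$ such that $\beta_{n+1}$ refines $\beta_n$ and, for each component $W$, the clopen subset of $b(S_1)$ consisting of ends with IBC eventually in $W$ is mapped by $\theta$ onto the analogous clopen subset of $b(S_2)$ for $\beta_n(W)$. This is possible because by Proposition \ref{special components} the components of $S_i \setminus F_n^i$ induce a cofinal system of clopen partitions of the totally disconnected space $b(S_i)$. I would then refine further so that for every $n$ and every component $W$ of $S_1 \setminus F_n^1$, the annular piece $A_n^1(W) := F_{n+1}^1 \cap cl_{S_1}(W)$ and its partner $A_n^2(\beta_n(W))$ are homeomorphic compact bordered surfaces, i.e.\ have equal number of boundary circles, equal genus, and equal orientability type. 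This invokes the classification of compact bordered surfaces together with the global hypotheses: equal genus and equal orientability type of $S_1$ and $S_2$ control the total number of handles and crosscaps available, while $\theta(b'(S_1)) = b'(S_2)$ and $\theta(b''(S_1)) = b''(S_2)$ control how they accumulate toward ends, so whenever one side momentarily accumulates more handles or crosscaps than the other inside matched components I can skip ahead in the relevant exhaustion until the invariants agree.

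With compatible exhaustions in place, I would inductively construct homeomorphisms $h_n: F_n^1 \to F_n^2$ with $h_{n+1}|_{F_n^1} = h_n$. The base case uses the classification of compact bordered surfaces. For the inductive step, $h_n$ restricts to a homeomorphism of $\partial F_n^1$ onto $\partial F_n^2$ respecting $\beta_n$, and since each pair $A_n^1(W), A_n^2(\beta_n(W))$ has matching topological type, the classification allows me to extend the prescribed boundary map on the $F_n^i$-side to a homeomorphism between these annular pieces; assembling the extensions yields $h_{n+1}$. The direct limit is the desired homeomorphism $S_1 \to S_2$. The main obstacle is the double refinement of the exhaustions: ensuring simultaneously that the matching $\beta_n$ is correct and that all annular pieces have matched topological invariants requires careful combinatorial bookkeeping, and this is most delicate when $b'(S_i)$ or $b''(S_i)$ is non-empty, since then infinitely many handles or crosscaps must be distributed across finitely many components of $S_i \setminus F_n^i$ in a way consistent with $\theta$ at every stage, which is precisely what the hypothesis on $b'$ and $b''$ makes feasible.
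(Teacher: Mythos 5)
The paper does not give a proof of this proposition: it is quoted as Kerékjártó's theorem and the reader is referred to \cite{Ke} and to Theorem 1 of \cite{Ri}. Your sketch is essentially a reconstruction of Richards' argument — necessity by functoriality of the ideal completion, sufficiency by matching exhaustions, applying the classification of compact bordered surfaces to the pieces $cl_S(F_{n+1}\setminus F_n)$, and passing to the direct limit — so the route is the standard one rather than a new one.

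That said, the two places where your write-up is thinnest are exactly where the real content of Richards' proof lies, so I would not accept the sketch as a complete proof. First, the sentence ``whenever one side momentarily accumulates more handles or crosscaps than the other inside matched components I can skip ahead in the relevant exhaustion until the invariants agree'' is an assertion, not an argument: in the infinite-genus or infinitely non-orientable case one must show that, \emph{given} that $\theta$ carries $b'(S_1)$ onto $b'(S_2)$ and $b''(S_1)$ onto $b''(S_2)$, the handles and crosscaps can be redistributed among the matched components of $S_i\setminus F_n^i$ at every stage simultaneously; Richards does this by first normalizing both exhaustions to a canonical form (each complementary piece planar, or carrying exactly one handle, or exactly one or two crosscaps) and only then matching, and a mixed situation (e.g.\ a component of $S_1\setminus F_n^1$ whose piece is non-orientable while the matched piece in $S_2$ is orientable with larger genus) requires an explicit cut-and-paste identity such as ``handle plus crosscap equals three crosscaps'' to reconcile the invariants; your sketch does not indicate how this is controlled. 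Second, in the inductive extension step you need the already-constructed homeomorphism $h_n$ on $\partial F_n^1$ to extend over the matched bordered pieces; for orientable pieces this imposes an orientation-coherence condition on the boundary circles (a homeomorphism of the boundary of an annulus that preserves orientation on one circle and reverses it on the other does not extend), so one must fix compatible orientations, or work with non-orientable pieces, from the outset. Both issues are repairable and are handled in \cite{Ri}, but as written they are genuine gaps rather than routine details.
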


For a proof see Kerékjártó \cite{Ke} and Theorem 1 of \cite{Ri}.

\subsubsection{A model for non compact connected boundaryless surfaces}

Let $S^{2} =\mathbb{R}^{2} \cup \{\infty\}$ be the one point compactification of the plane. 
We consider the Cantor ternary set $C$ as the subset of $S^{2}$ consisting of all points $(x,0)$ such that $x$ has one ternary expansion which contains no $1$'s. 

Let $X \supset Y \supset Z$ be subsets of $C$ homeomorphic to $b(S)$, $b'(S)$ and $b''(S)$, respectively (it is possible that $S$ have finite genus which is equivalent to $b'(S)=b''(S)= \varnothing$). 

If we look at $C$ as obtained by the process of removing middle thirds, then $C = \cap _{n \geq 1}J_{n}$  where $(J_{n})$ is a nested sequence of sets consisting of the union of pairwise disjoint closed intervals $I_{n}^{k}$ of length $\frac{1}{3^{n}}$, $1 \leq k \leq 2^{n}$. 
For each $n$ choose a collection of $2^{n}$ pairwise disjoint open balls $B_{n}^{k}$ such that $I_{n}^{k} \subset B_{n}^{k}$, the centers of $B_{n}^{k}$ and $I_{n}^{k}$ coincide, and the balls have the same radius. 
We also want that $B_{n +1}^{l} \cap B_{n}^{k} = \varnothing $ or $B_{n +1}^{l} \subset B_{n}^{k}$. 
We still have that $C = \cap _{n \geq 1} \cup _{1 \leq k \leq 2^{n}}B_{n}^{k}$ and for each $x \in C$ there exists a unique sequence $k_{n}$ such that $ \cap _{n \geq 1}B_{n}^{k_{n}} =\{x\}$. 

Every ball $B_{n}^{k}$ contains exactly two balls $B_{n +1}^{l}$ and $B_{n +1}^{l +1}$. 

If $B_{n}^{k}$ contains a point of $Z$ then we choose a closed disk $D$ contained in $B_{n}^{k}$ and disjoint from the closures of  $B_{n+1}^{l}$ and $B_{n+1}^{l+1}$, and make the connected sum of $S^{2}$ and a projective plane along the boundary of $D$.

If $B_{n}^{k}$ contains a point of $Y -Z$ then we choose a closed disk $D$ contained in $B_{n}^{k}$ and disjoint from the closures of  $B_{n +1}^{l}$ and $B_{n +1}^{l +1}$, and make the connected sum of $S^{2}$ and a torus along the boundary of $D$. 

Let $M_0$ be the set obtained after these connected sums and $M_1 = M_0 \setminus X$. 

In $M_0$, we have that points of $Z$ are accumulated by projective planes, points of $Y \setminus Z$ are accumulated by tori and points of $M_0 \setminus Y$ (which includes $X \setminus Y$) have neighborhoods homeomorphic to open balls.
Therefore $M_1$ is a surface.

For each $x \in X$ there exists a unique sequence $k_{n}$ such that $ \cap_{n \geq 1}B_{n}^{k_{n}} =\{x\}$. 
This gives a one to one correspondence between points of $x \in X$ and all ideal boundary components $(B_{n}^{k_{n}})$ of $b(M_1)$ . 
It is not difficult to show that this correspondence gives a homeomorphism from $b(M_1)$ onto $X$ that takes $b'(M_1)$ to $Y$ and $b''(M_1)$ to $Z$ (the final part of the proof of Theorem 2 of \cite{Ri} contains a detailed proof of this fact).

From the beginning we can assume $S$ and $M_1$ have the same genus and orientability type. It follows from Proposition \ref{Kerekjarto} that $S$ and $M_1$ are homeomorphic. To summarize, we have the following:

\begin{Proposition}\label{model}
Let $S$ be a non compact connected surface of infinite genus. 
Then $S$ is homeomorphic to a surface obtained from the sphere $S^{2}$ by removing a totally disconnected compact subset $X$, taking a collection of pairwise disjoint closed disks $(D_{n})$ and making the connected sum of $S^{2}$ and a torus or a projective plane along the boundaries of $D_{n}$. 
Given a neighborhood $W$ of $X$ in $S^{2}$ all but finite many disks $D_{n}$ are contained in $W$.
\end{Proposition}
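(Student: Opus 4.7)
The plan is to realize $S$ explicitly as a surface built from $S^{2}$ by the prescribed connected sum procedure and then invoke Kerékjártó's classification (Proposition~\ref{Kerekjarto}) to identify it with $S$ up to homeomorphism. Essentially all of the geometric construction is already laid out in the paragraphs preceding the statement; what needs to be supplied is: (i) the existence of a suitable embedding $b(S)\supset b'(S)\supset b''(S)$ into the Cantor ternary set $C\subset S^{2}$, (ii) the verification that the resulting surface $M_{1}$ has the correct genus, orientability type, and ideal boundary, and (iii) the final statement about the clustering of the disks $(D_{n})$ near $X$.

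For step (i), recall that $b(S)$, $b'(S)$ and $b''(S)$ are nested compact, totally disconnected, metrizable subsets of $B(S)$. Any nonempty compact, totally disconnected, metrizable space embeds topologically in $C$, so one first embeds $b(S)$ into $C$ and then needs to adjust the embedding so that $b'(S)$ and $b''(S)$ map into the desired nested closed subsets. Because $C$ is homogeneous and has a natural basis of clopen sets indexed by the dyadic tree, one can set up the embedding inductively along this tree: for each $b\in b(S)$, choose at level $n$ the clopen piece $B_{n}^{k_{n}}\cap C$ into which $b$ is sent, using the tree structure of an exhaustion of $S$ by compact bordered subsurfaces (Proposition~\ref{special components}) to identify each $b$ with a shrinking chain of clopen pieces. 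This yields a compatible triple of embeddings $X\supset Y\supset Z$.

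For step (ii), one carries out exactly the connected-sum construction described in the text: for each ball $B_{n}^{k}$ containing a point of $Z$, attach a projective plane; for each ball containing a point of $Y\setminus Z$, attach a torus; and take $M_{1}=M_{0}\setminus X$. The verification that $M_{1}$ is a surface follows because the attachments inside a fixed $B_{n}^{k}$ are disjoint from the two sub-balls at level $n+1$, so any point of $M_{0}\setminus X$ has a ball neighborhood disjoint from infinitely many attachments. The crucial matching is the identification $b(M_{1})\cong X$ sending $b'(M_{1})$ to $Y$ and $b''(M_{1})$ to $Z$: the decreasing sequence $(B_{n}^{k_{n}}\cap M_{1})$ is an IBC of $M_{1}$ for each $x\in X$, the equivalence classes are in bijection with $X$, and $(B_{n}^{k_{n}}\cap M_{1})$ has infinite genus (resp.\ is infinitely nonorientable) exactly when $x\in Y$ (resp.\ $x\in Z$), because beneath $x$ there are infinitely many torus (resp.\ projective plane) summands. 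By construction $M_{1}$ has infinite genus, and its orientability type agrees with that of $S$.

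Having arranged that $M_{1}$ and $S$ share genus, orientability type, and ideal boundary structure (as triples $(b,b',b'')$), Kerékjártó's theorem gives a homeomorphism $S\to M_{1}$, proving the first assertion. The final clause---that every neighborhood $W$ of $X$ in $S^{2}$ contains all but finitely many $D_{n}$---is automatic: only finitely many balls $B_{n}^{k}$ of any given level meet the compact set $S^{2}\setminus W$, and the chosen attaching disks $D_{n}$ lie inside their respective balls, so only finitely many $D_{n}$ can fail to be contained in $W$. The main obstacle I expect is step (i), since controlling the relative embedding $X\supset Y\supset Z$ inside a single Cantor set---so that non-planar and non-orientable ends land where the construction demands---requires choosing the embedding with some care rather than extracting it from an abstract universality statement; tying the embedding to an exhaustion of $S$ is the cleanest way I see to make it work.
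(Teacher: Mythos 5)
Your proposal is correct and follows essentially the same route as the paper: build the model $M_{1}$ from the Cantor set and the nested balls, identify $b(M_{1})$ with the triple $X\supset Y\supset Z$, and conclude with Kerékjártó's theorem (the paper defers the detailed verification to Richards). The one step you single out as the main obstacle --- arranging the nested embedding $X\supset Y\supset Z$ --- is actually immediate: take any embedding of the compact totally disconnected metrizable space $b(S)$ into $C$ and simply \emph{define} $Y$ and $Z$ to be the images of $b'(S)$ and $b''(S)$; no adjustment of the embedding is needed, because the construction attaches handles and crosscaps wherever $Y$ and $Z$ happen to land.
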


See Theorem 3 of \cite{Ri} and the discussion preceding it. 
Although $B(S)$ is not always a surface, the above description gives a very good description of what it looks like. 
$B(S)$ is locally homeomorphic to $\mathbb{R}^{2}$ at every point not in $b'(S)$. Points of $b''(S)$ are accumulated by projective planes and points of $b'(S) \setminus b''(S)$ are accumulated by tori.

Many results become very clear from Proposition \ref{model}. For example.

\begin{Proposition} \label{finite genus}
The following conditions about a connected boundaryless surface are equivalent.
\begin{enumerate}
    \item $B(S)$ is a compact surface.
    \item $S$ has finite genus.
    \item Every $b \in b(S)$ is planar.
\end{enumerate}
\end{Proposition}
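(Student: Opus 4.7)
The plan is to establish the equivalences via the two chains $(2) \Leftrightarrow (3)$ and $(1) \Leftrightarrow (3)$. The implication $(2) \Rightarrow (3)$ is essentially a tautology: if $K \subset S$ is compact bordered with $S \setminus K$ homeomorphic to a subset of the plane, and $b = (P_n) \in b(S)$, the defining property that $(P_n)$ leaves compact subsets of $S$ gives $P_n \subset S \setminus K$ for $n$ large, so $P_n$ is planar and hence $b$ is planar. Similarly $(1) \Rightarrow (3)$ follows by direct unpacking: if $B(S)$ is a compact surface, each $b \in b(S)$ has a Euclidean neighborhood $U$ in $B(S)$, inside which one finds a basic neighborhood $V^{*}$ of $b$ with $V$ connected open in $S$ and $fr_S V$ compact; then $V \subset U$ is planar and eventually contains $P_n$ for any representative IBC $(P_n)$ of $b$.

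For $(3) \Rightarrow (2)$ I would argue by contrapositive using Proposition \ref{model}. If $S$ has infinite genus, the model realises $S$ as $S^{2}$ with a totally disconnected compact subset $X$ removed and infinitely many tori or projective planes attached along disks, and one identifies $b'(S) \cong Y$ in the notation of the proposition. Infinite genus forces $Y \neq \varnothing$ (otherwise no handles or crosscaps are attached and the resulting surface has genus $0$), hence $b'(S) \neq \varnothing$, producing a non-planar ideal boundary point and contradicting $(3)$. The implication $(3) \Rightarrow (1)$ is then obtained by invoking the local description of $B(S)$ recorded immediately after Proposition \ref{model}: $B(S)$ is locally homeomorphic to $\mathbb{R}^{2}$ at every point of $B(S) \setminus b'(S)$. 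Under $(3)$ we have $b'(S) = \varnothing$, so $B(S)$ is everywhere locally Euclidean; combined with the compactness and Hausdorffness already part of the characterization of $B(S)$, this gives that $B(S)$ is a compact surface.

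The main obstacle is the careful invocation of Proposition \ref{model} in the two nontrivial directions. The key point to verify is the identification $b'(S) \cong Y$ in the model, which allows one to translate the topological notion of infinite genus into the existence of a non-planar end, and conversely to read off the local Euclidean structure at planar ends. Once this identification is in hand, the rest of the argument is a short matter of unpacking definitions.
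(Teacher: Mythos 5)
Your argument is correct and follows the same route the paper intends: the paper offers no written proof of this proposition, merely asserting that it "becomes very clear from Proposition \ref{model}", which is precisely the tool you use for the two nontrivial implications, and your unpackings of $(2)\Rightarrow(3)$ and $(1)\Rightarrow(3)$ are sound. The only point to polish is that Proposition \ref{model} and the local description following it are stated for surfaces of \emph{infinite} genus, so for $(3)\Rightarrow(1)$ you should invoke the (easier) finite-genus analogue of the model --- namely that $S$ embeds in a compact surface as the complement of a totally disconnected compact set, whence $B(S)$ is that compact surface --- rather than the infinite-genus statement itself.
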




\subsection{The impression of an ideal boundary point}

In this subsection $S$ will be a compact connected surface.

We will consider domains $U$ of $S$ and start to investigate the relation between $b(U)$ and $fr_S U$.

Firstly, a simple consequence of Proposition \ref{finite genus}.

\begin{Proposition}
    Let $S$ be a compact connected surface and $U$ a domain of $S$. Then every ideal boundary point of $U$ is planar and $B(U)$ is a compact surface.
\end{Proposition}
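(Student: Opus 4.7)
The plan is to reduce, via Proposition \ref{finite genus}, to showing that $U$ has finite genus. Since $S$ is compact we have $b(S)=\varnothing$ and $B(S)=S$ is trivially a compact surface, so applying Proposition \ref{finite genus} to $S$ itself yields that $S$ has finite genus, which I will denote by $g$.

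If $U=S$ the conclusion is immediate, so I may assume $U$ is non-compact and fix an exhaustion $(F_n)$ of $U$ (which exists by the exhaustion existence proposition). Each $F_n$ is a compact bordered subsurface of $U$, hence also of $S$. Since the genus of a non-compact connected surface equals the supremum of the genera of the compact bordered subsurfaces it contains, it is enough to prove that any compact bordered subsurface $F\subset S$ satisfies $g(F)\le g$; granting this, $g(U)=\sup_n g(F_n)\le g<\infty$, and Proposition \ref{finite genus} applied to $U$ then gives simultaneously that every $b\in b(U)$ is planar and that $B(U)$ is a compact surface.

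I would prove the genus bound $g(F)\le g$ homologically. In $F$ of genus $h$ there is a standard symplectic collection of $h$ disjoint pairs of simple closed curves $(a_i,b_i)$ with $a_i\cdot b_j=\delta_{ij}$ and $a_i\cdot a_j=b_i\cdot b_j=0$, spanning a rank-$2h$ subspace of $H_1(F;\mathbb{Z}/2)$ on which the mod-$2$ intersection form is non-degenerate. The intersection pairing is a topological invariant preserved by the inclusion $F\hookrightarrow S$, so these $2h$ classes stay linearly independent in $H_1(S;\mathbb{Z}/2)$; combined with $\dim H_1(S;\mathbb{Z}/2)\le 2g$ in the orientable case (and an analogous bound in the non-orientable case, obtained if necessary by passing to the orientable double cover of $S$), this forces $h\le g$.

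The main obstacle, and essentially the only step with genuine content, is the uniform handling of the intersection-form argument across orientable and non-orientable ambient surfaces. A cleaner alternative bypassing all homology is a contradiction argument using Proposition \ref{model}: were $U$ of infinite genus, its standard model would contain infinitely many pairwise disjoint handles (or crosscaps) embedded in $U$, and these same subsurfaces, sitting inside the compact ambient surface $S$, would force $S$ to have infinite genus as well, contradicting the finiteness of $g$ already established above.
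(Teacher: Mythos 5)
Your proposal is correct and follows exactly the route the paper intends: the paper offers no written proof beyond calling the statement "a simple consequence of Proposition \ref{finite genus}", and the intended argument is precisely yours — an open subsurface of a compact surface has finite genus (bounded by the genus of the ambient surface), so Proposition \ref{finite genus} gives both conclusions at once. Your filling in of the genus bound via the mod-$2$ intersection form (or via Proposition \ref{model}) is sound and simply makes explicit what the paper leaves to the reader.
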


Let $b=(P_n) \in b(U)$. 
The \textit{impression} of $b$ in $S$ is the set $Z(b):=\cap_n cl_S {P_n}$. 

\begin{Proposition} \label{different definition of Z(b)}
    Let $b=(P_n) \in b(U)$. $Z(b)$ is the set of points $x \in S$  such that there exists a sequence $(x_n)$ in $U$ such that $x_n \rightarrow b \in b(U)$ in $B(U)$ and $x_n \rightarrow x$ in $S$.
\end{Proposition}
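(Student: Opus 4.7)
The proof is essentially a two-direction definition chase, so my plan is to write out both inclusions separately, being careful only about the distinction between the ambient topology of $S$ (in which the closures $cl_S P_n$ are taken) and the topology of $B(U)$ (in which the sequence is supposed to converge to $b$).

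For the inclusion $Z(b) \subseteq \{x : \exists (x_n) \text{ as described}\}$, I would start with $x \in \cap_n cl_S P_n$ and, fixing a metric $d$ compatible with the topology of the compact surface $S$, choose $x_n \in P_n$ with $d(x_n, x) < 1/n$; this is possible exactly because $x \in cl_S P_n$, and since the $P_n$ are contained in $U$ we have $x_n \in U$. Convergence $x_n \to x$ in $S$ is immediate. To see that $x_n \to b$ in $B(U)$, I would take an arbitrary basic neighborhood $V^{*} = V \sqcup V'$ of $b$, where $V$ is open connected in $U$ with compact frontier. By definition of $V'$ and of the equivalence of IBCs, $b \in V'$ means there exists $m$ with $P_m \subset V$; then $x_n \in P_n \subset P_m \subset V \subset V^{*}$ for every $n \geq m$, which gives the desired convergence.

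For the reverse inclusion, I would suppose $(x_n) \subset U$ with $x_n \to b$ in $B(U)$ and $x_n \to x$ in $S$, and fix an arbitrary index $n_0$. The key observation is that $P_{n_0}$ itself is an admissible basic open set: it is open, connected, contained in $U$, and has compact frontier in $U$ (since $(P_n)$ is an IBC of $U$). Moreover, the shifted tail $(P_n)_{n \geq n_0}$ represents $b$ and consists of sets contained in $P_{n_0}$, so $b \in P_{n_0}'$ and $P_{n_0}^{*} = P_{n_0} \sqcup P_{n_0}'$ is a neighborhood of $b$ in $B(U)$. Therefore $x_n \in P_{n_0}^{*}$ eventually; since $x_n \in U$ this forces $x_n \in P_{n_0}$ eventually. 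Passing to the limit in the ambient surface $S$ yields $x \in cl_S P_{n_0}$, and since $n_0$ was arbitrary, $x \in \cap_{n_0} cl_S P_{n_0} = Z(b)$.

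There is no serious obstacle; the only point that needs mild care is the second direction, namely noticing that the $P_n$ from a chosen representative of $b$ themselves serve as a countable neighborhood basis of $b$ in $B(U)$, so that convergence of a sequence in $U$ to $b$ is equivalent to the sequence eventually entering every $P_n$. Once this is recorded, the statement follows from a one-line argument in each direction.
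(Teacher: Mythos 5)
Your proposal is correct and follows essentially the same route as the paper: in one direction you pick $x_n \in P_n$ close to $x$ (the paper uses a fundamental system of neighborhoods where you use a metric), and in the other you use that the sets $P_{n_0}^{*}$ form a neighborhood basis of $b$ in $B(U)$ so that the sequence eventually enters each $P_{n_0}$, giving $x \in cl_S P_{n_0}$. Your write-up is merely a bit more explicit than the paper's about why $x_n \to b$ in $B(U)$ and why $P_{n_0}^{*} \cap U = P_{n_0}$.
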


\begin{proof}
Let $x\in S\,|\,\exists\,(x_{n})\text{ in }U\text{ such that }x_{n}\rightarrow b \in b(U) \text{ in }B(U)\text{ and }x_{n}\rightarrow x\text{ in }S$.
For every $k$ there exists $n_{k}$ such that $x_{n}\in P_{k}$ for
$n\geq n_{k}$. Since $x_{n}\rightarrow x\text{ in }S$ we have
that $x\in cl_{S}P_{k}$ for every $k.$

Conversely, assume that $x \in cl_{S}P_{n}$ for every $n$.
Let $(W_{k})$ be a fundamental system of neighborhoods of $x$ in
$S$. For each $k$ take a point $x_{k}\in W_{k}\cap P_{k}.$ Obviously
$x_{k}\rightarrow x$ in $S$. Since $x_{k}\in P_{k}$ for all
$k$ we have that $x_{k}\rightarrow b$ in $U$.
\end{proof}

Now we would like to prove the following.

\begin{Proposition} \label{frontier union of impressions}
    Let $S$ be a compact connected surface and $U$ a domain of $S$. Then $fr_S U = \cup_{b \in b(U)} Z(b)$ (remark: $b(U)$ may be infinite).
\end{Proposition}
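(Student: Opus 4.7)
The plan is to prove the two inclusions $fr_S U \supseteq \bigcup_{b} Z(b)$ and $fr_S U \subseteq \bigcup_{b} Z(b)$ separately, using the characterization of $Z(b)$ from Proposition \ref{different definition of Z(b)} and the compactness of $B(U)$ given by the preceding proposition.

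For the inclusion $\bigcup_{b \in b(U)} Z(b) \subseteq fr_S U$, I would fix $x \in Z(b)$ and pick, via Proposition \ref{different definition of Z(b)}, a sequence $(x_n)$ in $U$ with $x_n \to b$ in $B(U)$ and $x_n \to x$ in $S$. Since $(x_n) \subset U$ and $x_n \to x$ in $S$, it is clear that $x \in cl_S U$. What remains is to rule out $x \in U$. If $x$ were in $U$, then $U$ would be a neighborhood of $x$ in $S$; pick any representative $b = (P_n)$ and observe that, because $(P_n)$ leaves compact subsets of $U$, a compact neighborhood of $x$ contained in $U$ would be disjoint from $P_n$ for large $n$, contradicting the fact that $x_n \in P_n$ eventually (which is forced by $x_n \to b$ in $B(U)$) together with $x_n \to x$ in $S$. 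Hence $x \in cl_S U \setminus U = fr_S U$, since $U$ is open.

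For the reverse inclusion, take $x \in fr_S U$ and a sequence $(x_n) \subset U$ with $x_n \to x$ in $S$. By the previous proposition, $B(U)$ is a compact surface, so after passing to a subsequence I may assume $x_n \to y$ in $B(U)$ for some $y \in B(U)$. The key observation is that the topology on $B(U)$ restricted to $U$ is, by construction, the original topology of $U$ as an open subset of $S$: the basic open sets $V^\ast = V \sqcup V'$ meet $U$ in $V$, so convergence of a sequence inside $U$ to a point of $U$ is the same in $B(U)$ and in $S$. Therefore, if $y$ were a point of $U$, then $x_n \to y$ in $S$ as well, but we already have $x_n \to x$ in $S$ with $x \in fr_S U$, so $y = x \in U \cap fr_S U = \emptyset$, a contradiction. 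Thus $y = b \in b(U)$, and by Proposition \ref{different definition of Z(b)}, $x \in Z(b)$.

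The main nontrivial point, and the place I would pause to verify carefully, is the compatibility of topologies used in the second inclusion: convergence in $B(U)$ of a sequence in $U$ to a point of $U$ must coincide with convergence in $S$. This is built into the definition of the basis $\{V^\ast\}$ on $B(U)$, but it is worth stating explicitly so that the Hausdorff argument identifying the two limits is airtight. Compactness of $B(U)$ is used essentially here, since otherwise one could not guarantee that the approaching sequence $(x_n) \subset U$ has any convergent subsequence in $B(U)$ at all.
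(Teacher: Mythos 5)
Your argument is correct, and the first inclusion is essentially the paper's (the paper argues directly with $Z(b)=\cap_n cl_S P_n$ rather than through the sequential characterization, but the contradiction "$x\in U$ would force $x\in cl_U P_n$ for all $n$" is the same point you make with the compact neighborhood). For the reverse inclusion, however, you take a genuinely different route. The paper fixes an exhaustion $(F_n)$ of $U$ and runs a pigeonhole (K\"onig's lemma) argument: at each stage some component $P_{m+1}\subset P_m$ of $U\setminus F_{m+1}$ still contains infinitely many terms of the approximating sequence, and the resulting nested sequence of components is the desired IBC $b$ with a subsequence $x_{k_n}\in P_n$. You instead invoke the preceding proposition that $B(U)$ is a compact surface and extract a convergent subsequence in $B(U)$ abstractly. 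Both are valid and there is no circularity, since the compactness of $B(U)$ is established earlier and independently. What each buys: your version is shorter and cleanly isolates the two facts that matter (compatibility of the topology of $B(U)$ with that of $U\subset S$ on the subspace $U$, and the Hausdorff identification of limits), whereas the paper's version is more elementary and constructive --- it in effect reproves, in the special case needed, the sequential compactness of $B(U)$ without appealing to the classification-theoretic input behind "$B(U)$ is a compact surface". One small point worth making explicit in your write-up: compactness alone does not yield convergent subsequences in general topological spaces; you are really using that $B(U)$ is a compact \emph{surface}, hence second countable and metrizable, hence sequentially compact. With that remark added, your proof is complete.
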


\begin{proof}
Let $x \in Z(b)$, where $b=(P_n) \in b(U)$.
    
Since $P_n \subset U$ for every $n$, we have that $Z(b) \subset cl_S U$.
Assume by contradiction that $x \in U$. 
Then $x \in U \cap cl_S P_n = cl_U P_n$ for every $n$, contradicting the fact that $(P_n)$ is an IBC of $U$.
This proves that $x \in fr_S U$.

Conversely, suppose that $x \in fr_S U$.

Let $(x_k)$ be a sequence in $U$ such that $x_k \rightarrow x$ in $S$ and consider an exhaustion $(F_n)$ of $U$. 
Recall that $U\setminus F_n$ has a finite number of components for every $n$.
This implies that, for each $n$, at least one of the components of $U \setminus F_n$ contains an infinite number of elements of $(x_k)$.

\textbf{Claim.} \textit{For every $m$, there exists open connected sets $P_1 \supset P_2 \supset \dots \supset P_m$ such that:
    \begin{enumerate}
        \item $P_j$ is a component of $U \setminus F_j$, for $1 \leq j \leq m$.
        \item $\{k \mid x_k \in P_m\}$ is infinite.
    \end{enumerate} }
    
The claim is obvious for $m=1$.
Assume that the claim is true for $1 \leq j \leq m$. 
Then at least one of the components of $U \setminus F_{m+1}$ contained in $P_m$, say $P_{m+1}$, has the property that $\{k \mid x_k \in P_{m+1}\}$ contains an infinite subset of $\{k \mid x_k \in P_m\}$.
This proves the claim.

From the claim we conclude that $(P_n)$ is an IBC of $U$ that defines an ideal boundary point $b \in b(U)$.
We also conclude that there exists an increasing sequence $k_n \rightarrow \infty$ such that $x_{k_n} \in P_n$ for every $n$. 
We have that $x_{k_n} \rightarrow b$, and since $x_{k_n} \rightarrow x$ in $S$ we have that $x \in Z(b)$.  
\end{proof}



\subsection{Domains whose frontier has a finite number of connected components}

In this subsection $S$ will be a compact surface. We would like to prove the following result.

\begin{Proposition} \label{finitely many components}
Let $S$ be a compact connected surface and $U$ be a domain of $S$. 
Then the following conditions on $U$ are equivalent:
\begin{enumerate}
   \item $U$ is a residual domain of a compact set $K$ such that $fr_S K$ has a finite number of connected components.
   \item $U$ has a finite number of ideal boundary points. 
   \item $fr_S{U}$ has a finite number of connected components.
\end{enumerate}
\end{Proposition}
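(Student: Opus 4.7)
The plan is to establish the cycle $(3) \Rightarrow (1)$, $(1) \Rightarrow (3)$, together with $(3) \Leftrightarrow (2)$, which suffices for the three-way equivalence.

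For $(3) \Rightarrow (1)$ I will take $K := fr_S U$. Since $U$ is open, $fr_S U$ has empty interior in $S$, so $fr_S K = K$; the hypothesis of $(1)$ on $fr_S K$ is then immediate from $(3)$. Writing $S \setminus K = U \sqcup (S \setminus cl_S U)$ as a disjoint union of two open sets shows that $U$, being connected, is a connected component of $S \setminus K$, i.e.\ a residual domain of $K$.

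The main work lies in $(1) \Rightarrow (3)$, where I will argue that $fr_S U$ is a union of connected components of $fr_S K$. First one checks $fr_S U \subseteq fr_S K$: any $p \in fr_S U$ lies in $K$ since $U$ is a residual domain of $K$, and $p$ cannot belong to $int_S K$ because a neighborhood of $p$ contained in $K$ would be disjoint from $U$ and contradict $p \in fr_S U$. Next one shows that $fr_S U$ is relatively open in $fr_S K$: if $p \in fr_S K \setminus fr_S U$, some neighborhood $W$ of $p$ is disjoint from $U$, and then every point of $W \cap fr_S K$ also lies outside $fr_S U$. Since $fr_S U$ is closed in $S$, it is therefore clopen in $fr_S K$, hence a union of components of $fr_S K$; by $(1)$ there are only finitely many such components.

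The equivalence $(3) \Leftrightarrow (2)$ reduces to two earlier results. For $(3) \Rightarrow (2)$ I apply Proposition \ref{g+1 ideal boundary points with boundary} with $K := fr_S U$: the argument of the second paragraph shows $U$ is a residual domain of this $K$, and by assumption $K$ has $m < \infty$ components, so $|b(U)| \leq m(g+1)$. For $(2) \Rightarrow (3)$, Proposition \ref{frontier union of impressions} yields $fr_S U = \bigcup_{b \in b(U)} Z(b)$, a finite union under $(2)$, and each impression $Z(b) = \bigcap_n cl_S P_n$ is a decreasing intersection of compact connected subsets of the compact Hausdorff space $S$, hence itself connected. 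Thus $fr_S U$ has at most $|b(U)|$ components.

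The main obstacle is the clopen argument in $(1) \Rightarrow (3)$: the inclusion $fr_S U \subseteq fr_S K$ is routine, but showing that $fr_S K \setminus fr_S U$ is open in $fr_S K$ requires keeping careful track of points of $fr_S K$ that are approached only by other residual domains of $K$. Everything else reduces to direct applications of Propositions \ref{frontier union of impressions} and \ref{g+1 ideal boundary points with boundary}, combined with the classical fact that a nested intersection of compact connected subsets of a Hausdorff space is connected.
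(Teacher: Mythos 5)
Your proof is correct. It shares with the paper the two substantive ingredients for $(2)\Leftrightarrow(3)$ — Proposition \ref{frontier union of impressions} plus connectedness of nested intersections of compact connected sets for $(2)\Rightarrow(3)$, and Proposition \ref{g+1 ideal boundary points with boundary} applied to $fr_S U$ for the converse — and the same trivial observation for $(3)\Rightarrow(1)$. Where you genuinely diverge is the logical skeleton: the paper closes the cycle $(1)\Rightarrow(2)\Rightarrow(3)\Rightarrow(1)$, so the implication $(1)\Rightarrow(3)$ is only obtained by passing through the ideal boundary, whereas you prove $(1)\Rightarrow(3)$ directly by the point-set argument that $fr_S U$ is clopen in $fr_S K$, hence a union of at most $m$ of its components. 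That argument is elementary, uses none of the ends machinery, and works in any locally connected space; it also makes explicit the inclusion $fr_S U \subseteq fr_S K$, which the paper implicitly needs anyway when it applies Proposition \ref{g+1 ideal boundary points with boundary} in its step $(1)\Rightarrow(2)$ (the proposition's hypothesis is on the components of $K$ itself, while condition $(1)$ constrains $fr_S K$; your choice of $K=fr_S U$, for which $fr_S K=K$ since a frontier of an open set has empty interior, sidesteps this mismatch cleanly). The price is a slightly longer write-up; the paper's cycle is shorter but leans entirely on the two earlier propositions.
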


These conditions are not equivalent if $S$ is not a surface and could be false in more general compact connected topological spaces. 

Consider the following compact connected subset of $\mathbb{R} \e 2$: $X=([0,1] \times \{0,2\}) \cup (C \times [0,2])$, where $C$ is an infinite closed subset of $[0,1]$. 
If $K =\{(x,y) \in X \thinspace \vert \thinspace  y \leq 1\}$ then $K$ is compact and connected. 
$K$ has only one residual domain $U$, but $fr_S{U}$ has an infinite number of components and ends.



\subsection{An upper bound on the number of ideal boundary points of a domain}

Now we would like to give an upper bound on the number of ideal boundary points of a
domain $U$ contained in a compact connected surface $S$. 
This result is known among experts in low-dimensional topology.

This Proposition appeared in \cite{Mather1981} in the context of an arbitrary surface $S$ and a domain $U$ such that $fr_U S$ is compact and has a finite number of components. Mather only proves that $U$ has a finite number of relatively compact ideal boundary points, but does not provide an upper bound for them. 
It also appeared in \cite{Ol3}, but with a different proof that had a gap. 
Our lemma \ref{enough to consider} solves this problem. 

Proposition \ref{g+1 ideal boundary points with boundary} can be used to give an elementary proof of Lemma 2.3 of \cite{Mather1981}, with an upper bound  for the number of relatively compact ideal boundary points explicitly given.

\begin{Proposition} \label{g+1 ideal boundary points with boundary}
    Let $K$ be a compact subset of a compact connected surface $S$ and $U$ a residual domain of $K$. If $K$ has $m$ connected components then $U$ has at most $m(g+1)$ ideal boundary points, where $g$ is the genus of $S$.
\end{Proposition}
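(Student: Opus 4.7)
The plan is to reduce bounding the ideal boundary points of $U$ to bounding, for each $n$ in an exhaustion $(F_n)$ of $U$, the number $N_n$ of connected components of $U\setminus F_n$; by Corollary \ref{only finitely many ideal boundary points} it suffices to prove $N_n\le m(g+1)$ uniformly in $n$, and for this I would run a Euler characteristic argument on the decomposition of $S$ into $F_n$ and its complementary region.

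I would first let $V_1,\dots,V_{N_n}$ denote the components of $U\setminus F_n$; by the third bullet of the definition of an exhaustion, each $cl_U V_i$ is a non-compact bordered surface whose frontier in $U$ is a single circle $\gamma_i\subset\partial F_n$, and the $\gamma_i$'s exhaust the boundary of $F_n$. Next I would look at the complementary compact set $\Sigma:=S\setminus F_n^{\circ}$: because $\partial F_n$ is a $1$-submanifold of $S$, the set $\Sigma$ is itself a compact bordered surface with $\partial\Sigma=\partial F_n$, no matter how wild $K$ sits inside it. Let $\Sigma_1,\dots,\Sigma_L$ be its connected components, $k_l\ge 1$ the number of $\gamma_i$'s contained in $\Sigma_l$, so that $\sum_l k_l=N_n$.

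The combinatorial heart of the argument is the bound $L\le m$. Since $cl_U V_i$ is non-compact, $cl_S V_i$ must leave $U$; because the other residual domains of $K$ are open and disjoint from $V_i$, this forces $cl_S V_i\cap K\ne\emptyset$. Moreover every $\Sigma_l$ must contain at least one $\gamma_i$, for otherwise $\Sigma_l$ would be disjoint from $\partial F_n$ and hence clopen in $S$, contradicting connectedness of $S$ together with non-emptiness of $F_n^{\circ}$. The containment $\gamma_i\subset\Sigma_l$ pulls the whole $V_i$ into $\Sigma_l$, so by the previous observation $\Sigma_l\cap K\ne\emptyset$. Connectedness of the components $K_j$ of $K$ and disjointness of the $\Sigma_l$'s then produce an injection $\Sigma_l\mapsto K_{j(l)}$, yielding $L\le m$.

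The Euler characteristic computation closes the proof. Additivity gives $\chi(S)=\chi(F_n)+\chi(\Sigma)$ because $\chi(\partial F_n)=0$, and the elementary inequality $\chi(X)\le 2-k$ for every connected compact bordered surface $X$ with $k\ge 1$ boundary circles yields $\chi(F_n)\le 2-N_n$ and $\chi(\Sigma_l)\le 2-k_l$; summing gives $\chi(S)\le 2+2L-2N_n$, i.e.\ $N_n\le 1+L-\chi(S)/2$. For orientable $S$, $\chi(S)=2-2g$, so $N_n\le L+g\le m+g\le m(g+1)$; for non-orientable $S$, $\chi(S)=2-g$, so $N_n\le L+g/2\le m+g/2\le m(g+1)$. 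The step I expect to be most delicate is verifying that \emph{every} component $V_i$ of $U\setminus F_n$ accumulates on $K$, not only those associated with a single ideal boundary point of $U$: this depends crucially on combining the non-compactness of $cl_U V_i$ supplied by the exhaustion with the openness of the other residual domains of $K$, without which $V_i$ could a priori accumulate on a second residual domain or even stay inside $U$.
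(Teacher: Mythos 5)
Your proposal is correct, and while it shares the paper's overall skeleton (exhaust $U$ by $(F_n)$, bound the number of components of the complement uniformly in $n$, invoke Corollary \ref{only finitely many ideal boundary points}), the two key steps are carried out by genuinely different means. First, where the paper reduces to the case $S=K\sqcup U$ (Lemma \ref{enough to consider}) and then shows each component of $S\setminus F_n$ contains a component of $K$ (Lemma \ref{component of S-F_n contains component of K}), you avoid the reduction entirely by observing that the other residual domains of $K$ are open and disjoint from $V_i$, so the non-compactness of $cl_U V_i$ forces $cl_S V_i$ to meet $K$ itself; this is a clean shortcut and the injection $\Sigma_l\mapsto K_{j(l)}$ is justified exactly as in the paper (each component of $K$ lies in a single $\Sigma_l$). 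Second, and more substantially, the paper bounds the number of boundary circles of \emph{each} component of $S\setminus F_n$ by $g+1$, using the characterization of the genus as the maximal number of disjoint simple closed curves that do not disconnect $S$ together with the explicit connectedness computation of Lemma \ref{g+2}; you instead apply additivity of the Euler characteristic to $S=F_n\cup(S\setminus F_n^{\circ})$ and the inequality $\chi(X)\le 2-k$ for a connected compact bordered surface with $k$ boundary circles. The paper's route is more elementary (pure point-set connectivity plus the curve characterization of genus), while yours is shorter and actually yields the sharper estimate $N_n\le L+g\le m+g$, which is strictly better than $m(g+1)$ when $m\ge 2$ and $g\ge 1$ (consistent with the paper's sharpness example, which has $m=1$). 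The only points worth making explicit in a write-up are (i) that distinct components $V_i$ of $U\setminus F_n$ have distinct boundary circles (a collar argument), so that $N_n$ really equals the number of circles of $\partial F_n$ --- an identification the paper also uses implicitly --- and (ii) the degenerate case $K=\varnothing$, where the statement is vacuous.
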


Before the proof, we would like to note that the upper bound is sharp in the sense that, on any compact connected surface, there exist examples where it is reached.
In a compact connected orientable surface $S$ of genus $g$, take one meridian $\gamma_i$ around each handle of $S$ and connected them with a simple curve $\alpha$ in the simplest way.
Then $K=\alpha \cup \gamma_1 \cup \dots \cup \gamma_g$ is a compact connected subset of $S$ and $U=S \setminus K$ is connected.
It easy to see that $U$ has $g+1$ ideal boundary points.
In the non orientable case we can give a similar example by thinking of $S$ as the sphere with $g$ Mobius strips attached to it, and taking the curves $\gamma_i$ as the "the central circles" of the strips.

We are going to divide the proof into a few lemmas.

\begin{Lemma} \label{enough to consider}
    We may assume that $S=K \sqcup U$, where $U$ is the only residual domain of $K$.
\end{Lemma}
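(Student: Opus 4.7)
The plan is to replace the given compact set $K$ by the larger compact set $K^{*}:=S\setminus U$, and show that this replacement only weakens the hypothesis: it preserves $U$ (so it preserves the ideal boundary points of $U$) and does not increase the number of connected components.

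First I would note that $K^{*}$ is compact (complement of an open set in the compact surface $S$) and that, by construction, $S=K^{*}\sqcup U$ with $U$ the unique residual domain of $K^{*}$. So the two nontrivial items are compactness (which is immediate) and the component count.

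The heart of the proof is to show that $K^{*}$ has at most $m$ connected components. Write $K^{*}=K\cup\bigsqcup_{i}V_{i}$, where the $V_{i}$ are the residual domains of $K$ other than $U$. Let $K_{1},\dots,K_{m}$ be the connected components of $K$. Each $K_{j}$ is connected and contained in $K^{*}$, so it lies in a unique component $C(K_{j})$ of $K^{*}$. I would show that the map $K_{j}\mapsto C(K_{j})$ is surjective onto the set of components of $K^{*}$, which gives the bound. To prove surjectivity, fix a component $C$ of $K^{*}$ and pick $x\in C$. If $x\in K$, then the component $K_{j}$ containing $x$ satisfies $K_{j}\cup C$ is connected in $K^{*}$, hence $K_{j}\subset C$ and $C(K_{j})=C$. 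If instead $x\in V_{i}$, then connectedness of $V_{i}$ forces $V_{i}\subset C$; I then invoke that $cl_{S}V_{i}=V_{i}\cup fr_{S}V_{i}$ is connected (closure of a connected set) with $fr_{S}V_{i}\subset K$, and $fr_{S}V_{i}\neq\emptyset$ because $V_{i}$ is a proper open subset of the connected surface $S$. Hence $cl_{S}V_{i}\subset C$ and any point of $fr_{S}V_{i}$ reduces us to the previous case.

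With the bound established, the reduction is complete: if the proposition is proved in the special case $S=K\sqcup U$, then applying it to $(S,K^{*},U)$ yields at most $m'(g+1)\leq m(g+1)$ ideal boundary points of $U$, where $m'\leq m$ is the component count of $K^{*}$. Since $b(U)$ depends only on $U$, the same bound holds for the original $K$.

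The only step that requires genuine care is the surjectivity argument above, specifically ruling out a component of $K^{*}$ that lies entirely inside one of the extra residual domains $V_{i}$; everything else is bookkeeping. The key topological input is that the frontier of any residual domain of $K$ in the connected surface $S$ is a nonempty subset of $K$, which is where connectedness of $S$ enters.
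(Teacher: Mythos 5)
Your proposal is correct and follows essentially the same route as the paper: both replace $K$ by $K^{*}=S\setminus U$ and reduce to showing that every component of $K^{*}$ meets $K$, which in both cases comes down to the fact that each extra residual domain $V_{i}$ has nonempty frontier contained in $K$ (equivalently, that $V_{i}$ cannot be clopen in the connected surface $S$). The paper phrases this last step as a contradiction ($C=V_{\lambda_{0}}$ would be a proper clopen subset of $S$) while you argue directly via $cl_{S}V_{i}$, but the content is the same.
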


\begin{proof}
Let $W= \sqcup_{\lambda \in L} V_\lambda$ be the union of the components of $S \setminus K$ different from $U$ and $K' = S \setminus U = K \sqcup W$.
Obviously, $S = K' \sqcup U$. 

Since $K \subset K'$ every component of $K$ is contained in some component of $K'$.

\textbf{Claim.} \textit{Every component of $K'$ contains at least one component of $K$}.

To prove the claim it is enough to show that every component $C$ of $K'$ intersects $K$.
Assume by contradiction that $C \cap K = \varnothing$.
Then $C \subset W$. 
The components of $W$ are the sets $(V_{\lambda})_{\lambda \in L}$.
Therefore $C \subset V_{\lambda_0}$ for some $\lambda_0 \in L$.
But $C$ is a maximal connected subset of $K'$, and therefore $C = V_{\lambda_0}$.
This implies that $C$ is open in $S$.

On the other hand, being a component, $C$ is closed in $K'$ and $C$ is closed in $S$.
This contradicts that $S$ is connected, which proves the claim.

From the claim we conclude that the number of components of $K'$ is less than or equal to the number of components of $K$.
Therefore we could use the decomposition $S = K' \sqcup U$ to prove Proposition \ref{g+1 ideal boundary points with boundary}.
\end{proof}

Now we would like to calculate an upper bound for the number of ideal boundary points of $U$.
Let $(F_{n})_{n \geq 1}$ be an exhaustion of  $U$.

\begin{Lemma} \label{component of S-F_n contains component of K}
    If $W$ is a component of $S \setminus F_{n}$ then $W$ contains at least one component of $K$.
\end{Lemma}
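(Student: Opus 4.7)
The plan is to argue by contradiction: I will assume that $W \cap K = \varnothing$ and derive a clash between the compactness of $cl_S W$ and the exhaustion property applied inside $U$. Under the assumption $W \cap K = \varnothing$, we have $W \subset U \setminus F_n$, so $W$ lies in some component $W'$ of $U \setminus F_n$; but $W'$ is itself connected and contained in $S \setminus F_n$, so the maximality of the component $W$ forces $W' = W$. Thus $W$ is simultaneously a component of $U \setminus F_n$, and the third bullet in the definition of an exhaustion (applied to the exhaustion $(F_n)$ of $U$) says that $cl_U W$ is a non-compact bordered surface whose boundary is one circle of $\partial F_n$.

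Next I will show that $cl_S W = cl_U W$. Since $F_n$ is closed in $S$, the set $S \setminus F_n$ is open in $S$, and hence $W$, being a component of an open subset of the locally connected space $S$, is open in $S$; at the same time $W$ is closed in the subspace $S \setminus F_n$ because components of any space are closed. Therefore $cl_S W \setminus W \subset F_n \subset U$, which gives $cl_S W \subset U$ and so $cl_S W = cl_U W$. But $cl_S W$ is a closed subset of the compact surface $S$, hence compact, while $cl_U W$ is non-compact by the previous step. This contradiction proves that $W \cap K \neq \varnothing$.

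Finally, I upgrade ``intersects $K$'' to ``contains a component of $K$''. Since $W$ is clopen in $S \setminus F_n$ and $K \subset S \setminus F_n$, the intersection $W \cap K$ is clopen in $K$. For any topological space, a clopen subset meets each connected component in either the empty set or the whole component, so a nonempty clopen subset must contain at least one component. Applied to $W \cap K$ this yields the lemma.

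The only subtle point I foresee is the identification $cl_S W = cl_U W$: it is tempting to think a closure point of $W$ in $S$ might sit in $K$ (that is, on the frontier of $U$), and the argument that this cannot happen relies crucially on the fact that $W$ is closed in $S \setminus F_n$ and that the remaining closure points must therefore lie in $F_n$, which is itself interior to $U$. Once this observation is in place, the contradiction between compactness of $cl_S W$ and non-compactness of $cl_U W$ is immediate.
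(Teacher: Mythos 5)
Your proof is correct and follows essentially the same route as the paper's: assume $W\cap K=\varnothing$, identify $W$ with a component of $U\setminus F_n$, show $cl_S W\subset U$ so that $cl_U W=cl_S W$ is compact, and contradict the non-compactness clause in the definition of an exhaustion. The only cosmetic differences are that you justify $cl_S W\setminus W\subset F_n$ by the clopenness of $W$ in $S\setminus F_n$ (the paper instead describes $fr_S W$ as circles of $\partial F_n$) and you upgrade ``meets $K$'' to ``contains a component of $K$'' via a clopen-set argument rather than the paper's observation that each component of $K$ lies in some component of $S\setminus F_n$; both are equivalent.
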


\begin{proof}
Since $K \subset S \setminus F_n$, we have that every component of $K$ is contained in some component of $S \setminus F_n$. 
It is enough to show that $W \cap K \neq  \varnothing $.

Let us assume by contradiction that  $W \cap K = \varnothing $. 

It follows that $W \subset U$ and that $W$ is contained in a connected component $V$ of $U \setminus F_n$. 
But every component of $U \setminus F_n$ is contained in some component of $S \setminus F_n$. From this we conclude that $W=V$. 

We have that $cl_S W = W \cup \sigma_1 \cup  . . . \cup \sigma_l$,  where each $\sigma_i$ is a circle of $\partial{F_n} \subset U$. 
From this we conclude that $cl_S W \subset U$ and that $cl_U V = cl_U W = U \cap cl_S W = cl_S W$ is compact. 
But this contradicts the fact that $(F_n)$ is an exhaustion of $U$.

Therefore $W$ $\cap$ $K$ $\neq$ $\emptyset$ and every connected component of $S \setminus F_n$ contains at least one  component of $K$.
\end{proof}

Let $E_1,...,E_k$ be the components of $S \setminus F_n$. 
From Lemma \ref{component of S-F_n contains component of K} we have that $k \leq m$.

For $1\leq i\leq k$, we have that each $E_i$ is an open set whose frontier in $S$ is a union of circles of $\partial F_n$.
We write $fr_S E_i = C_{i1} \sqcup \dots \sqcup C_{i \nu_{i}}$, where $\nu_i \geq 1$.

Since $\partial F_n = fr_S F_n = fr_S E_1 \sqcup \dots \sqcup fr_S E_k$, we have that the number of components of $\partial F_n$ is $\nu_1 + \dots + \nu_k$.

With this notation we have that
\begin{center}
    $S = int_S F_n \sqcup fr_S F_n \sqcup (E_1 \sqcup \dots \sqcup E_k)$
    
\end{center}
or
\begin{center}
    $S = int_S F_n \sqcup \big( \sqcup_{1 \leq i \leq k}(E_i \sqcup fr_S E_i) \big)$.
\end{center}

\begin{Lemma} \label{g+2}
    For each $i$, $1 \leq i \leq k$, and any subset $\{j_1, \dots, j_s\}$ of $\{1, \ldots ,\nu_i\}$  we have that the set $int_SF_n \sqcup E_i \sqcup \big(C_{i j_1} \sqcup \dots \sqcup C_{i j_s}\big)$
is connected.
\end{Lemma}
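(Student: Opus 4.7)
The plan is to prove path-connectedness of
\[
T := int_S F_n \cup E_i \cup \bigl( C_{i j_1} \cup \cdots \cup C_{i j_s} \bigr)
\]
under the implicit hypothesis $s \geq 1$: if $s = 0$ the statement fails, since $int_S F_n$ and $E_i$ are two mutually separated open subsets of $S$, so at least one boundary circle is needed as a bridge. The strategy rests on two observations that reduce the problem to understanding a small neighborhood of a single circle $C_{i j_r}$: first, $int_S F_n$ is connected (the interior of a compact connected bordered surface is connected and dense in it); second, $E_i$ is connected by definition as a component of $S \setminus F_n$.

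The local bridge between these two pieces comes from the collar neighborhood theorem. First I would fix any $r \in \{j_1, \ldots, j_s\}$ and use that $C := C_{i j_r}$ is a boundary circle of the bordered surface $F_n$ embedded in the boundaryless surface $S$, producing an embedding $h : S^1 \times (-1, 1) \hookrightarrow S$ with $h(S^1 \times \{0\}) = C$, $h(S^1 \times [0,1)) \subset F_n$, and hence $h(S^1 \times (0,1)) \subset int_S F_n$ and $N^-(C) := h(S^1 \times (-1,0)) \subset S \setminus F_n$. I would next verify that this outer half-collar $N^-(C)$ lies entirely in the specific component $E_i$: $N^-(C)$ is a connected open subset of $S \setminus F_n$, and because $C \subset fr_S E_i$ every point of $C$ is a limit from the complement of $F_n$ of points of $E_i$, so $N^-(C) \cap E_i \neq \varnothing$, whence $N^-(C) \subset E_i$ by maximality of the component.

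With this local picture, the assembly is routine. A single radial arc $\gamma$ of the form $h(\{\ast\} \times (-1,1))$ connects a point of $int_S F_n$ through $C$ to a point of $E_i$ and lies entirely in $T$. Joining this with paths inside the already-connected sets $int_S F_n$ and $E_i$, and using the analogous radial arcs inside the collars of the other circles $C_{i j_t}$ to reach points lying on those circles, one sees that every point of $T$ is path-joined to a fixed basepoint in $int_S F_n$, so $T$ is path-connected and hence connected. The only point requiring genuine care is the containment $N^-(C) \subset E_i$ rather than some other component $E_{i'}$; this is what makes the lemma specific to the component adjacent to $C_{i j_r}$, and it is exactly what the decomposition $fr_S E_i = C_{i 1} \sqcup \cdots \sqcup C_{i \nu_i}$ is there to supply.
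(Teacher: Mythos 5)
Your proof is correct, but it takes a genuinely different and heavier route than the paper's. The paper disposes of the lemma in two lines via the point-set fact that any set $B$ with $A \subset B \subset cl_S A$ for $A$ connected is itself connected: since each circle $C_{i j_r}$ lies in $fr_S E_i \subset cl_S E_i$ and also in $\partial F_n \subset cl_S (int_S F_n)$, both $E_i \sqcup (C_{i j_1} \sqcup \dots \sqcup C_{i j_s})$ and $int_S F_n \sqcup (C_{i j_1} \sqcup \dots \sqcup C_{i j_s})$ are connected, and for $s \geq 1$ their union is connected because they share the circles. Your collar-neighborhood argument proves the stronger conclusion of path-connectedness and makes the geometry explicit, but it invokes the existence of a bicollar for each boundary circle — note that Proposition \ref{g+1 ideal boundary points with boundary} does not assume $S$ orientable, so one must observe that a frontier circle of an embedded bordered subsurface is automatically two-sided — and the containment $N^-(C) \subset E_i$ that you carefully verify is exactly what the one-line inclusion $C_{i j_r} \subset fr_S E_i \subset cl_S E_i$ replaces in the paper's argument. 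Your remark that the statement fails for the empty subset is a legitimate catch: as written the lemma allows $s=0$, the paper's (implicit) final gluing step also needs $s \geq 1$, and in the application to bounding $\nu_l$ only nonempty collections of circles occur.
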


\begin{proof}
We have that $E_i$ is connected and $E_i \subset E_i \sqcup \big(C_{i j_1} \sqcup \dots \sqcup C_{i j_s} \big) \subset cl_S E_i$.
Therefore $E_{i}\sqcup\left(C_{ij_{1}}\sqcup\dots\sqcup C_{ij_{s}}\right)$is connected.

Similarly, $int_S F_n$ is connected, and since 
\begin{center}
    $int_S F_n \subset int_S F_n \sqcup (C_{i j_1} \sqcup \dots \sqcup C_{i j_s}) \subset cl_S F_n$
\end{center}
we have that $int_S F_n \sqcup (C_{i j_1} \sqcup \dots \sqcup C_{i j_s})$ is connected.
\end{proof}

We claim that $\nu_i \leq g+1$, for $1 \leq i \leq k$.

If we had $\nu_l \geq g+2$ for some $l \in \{1,\dots,k\}$,
then $S \setminus (C_{l1} \sqcup \dots \sqcup C_{l(g+1)})$ would be disconnected.
But $S \setminus (C_{l1}\sqcup\ldots\sqcup C_{l(g+1)})=$ 
\begin{center}
    $\big[ \bigsqcup_{1 \leq i \leq k,i \neq l} \big (int_S F_n \sqcup (E_{i}\sqcup fr_S E_i) \big) \big] \bigsqcup \big[int_S F_n \sqcup (E_{l}\sqcup C_{l(g+2)} \sqcup \dots \sqcup C_{l\nu_{l}})\big]$
\end{center}
and the second side of the equality is a connected set by Lemma \ref{g+2}, a contradiction.

The number of components of $\partial F_{n}$ is $\eta_{n}:=\nu_{1}+...+\nu_{k}$.
It follows that $\eta_{n}=\nu_{1}+...+\nu_{k}\leq k(g+1)\leq m(g+1)$.

From Corollary $\ref{only finitely many ideal boundary points}$ we have that $U$ has at most $m(g+1)$ ideal boundary points.



\subsection{Proof of Proposition \ref{finitely many components}: equivalent conditions on the frontier of a domain}

Let $S$ be a compact connected surface and $U$ be a domain of $S$. 
We would like to prove that the following conditions on $U$ are equivalent:
\begin{enumerate}
   \item $U$ is a residual domain of a compact set $K$ such that $fr_S K$ has a finite number of connected components.
   \item $U$ has a finite number of ideal boundary points. 
   \item $fr_S{U}$ has a finite number of connected components.
\end{enumerate}

$(1)$ implies $(2)$.

If $U$ is a residual domain of a compact set $K$ such that $fr_S K$ has a finite number of connected components, then by Proposition \ref{g+1 ideal boundary points with boundary} we have that $b(U)$ is finite.

$(2)$ implies $(3)$.

From Proposition \ref{frontier union of impressions} we know that $fr_S U = \cup_{b \in b(U)} Z(b)$.
Since $b(U)$ is finite and each set $Z(b)$ is connected, we have that $fr_S U$ has a finite number of components.

$(3)$ implies $(1)$.
If we take $K$ as $fr_S U$, the result follows from the simple fact that $U$ is always a residual domain of $fr_S U$.



\subsection{The prime ends compactification}

In this subsection we will consider a compact connected surface $S$ and a domain $U \subset S$ such that $fr_S U$ has a finite number of connected components.

We will give a brief description of the theory of prime ends following Mather \cite{Mather1981} and \cite{Mather1982}. See also \cite{Cali}, \cite{FrLe} and \cite{Ol2}.
All results here are well known. They are all easy to prove or a proof can be found in \cite{Mather1982}.

As we saw in Proposition \ref{finitely many components}, $fr_S U$ has finite number of connected components if and only if $b(U)$ is finite.
Observe that $U$ is a surface finite genus and by Proposition \ref{finite genus} $B(U)$ is a compact surface.
Therefore we are in the setting of Mather's work \cite{Mather1982} and we will make a brief presentation of his theory of prime ends.
The basis of Mather's work are \cite{Ca1} and \cite{Ca2}.

The basic idea is the following.
Let $b \in b(U)$.
If $Z(b)$ contains more that one point, then $b$ is called \textit{regular}.
In this case we replace $Z(b)$ with a circle $C(b)$.
If $Z(b)$ contains just one point, then $b$ represents a puncture of $U$ in $S$, and we fill in this puncture with the corresponding point of $S$.
In this way we obtain a compact connected surface with boundary.

A \textit{chain} is a sequence $(V_i)_{i \geq 1}$ of open connected subsets of $U$ such that

\begin{enumerate}
    \item $V_i \supset V_{i+1}$ for every $i$.
    \item $fr_U V_i$ is nonempty and connected for every $i$.  
    \item $cl_S (fr_U V_i) \cap cl_S (fr_U V_j) =\varnothing $ for $i \neq j$. 
\end{enumerate}

A chain $(W_j)$ \textit{divides} $(V_i)$ if for every $i$ there exists $j$ such that $W_j \subset V_i$. 
Two chains are \textit{equivalent} if each divides the other. 
A chain is \textit{prime} if any chain which divides it is equivalent to it.
A sufficient condition for a chain $(V_i)$ to be prime is that there exists $p \in S$ such that $fr_U V_i \rightarrow p$ (every neighborhood of $p$ contains all but finitely many of the sets $fr_U V_i$). 

A \textit{prime point} is an equivalence class of prime chains. 

Let $x \in U$ and consider a family of closed disks $D_1 \supset D_2 \supset \dots $ in $U$ such that $D_{i+1} \subset int_U D_i$ and $ \cap D_i = \{x\}$. 
Then $ (int_U D_i)$ is a prime chain that defines a prime point denoted by $\omega(x)$. 

A \textit{prime end} is a prime point which is not of the form $\omega (x)$ for any $x \in U$.
The set of prime points of $U$ is denoted by $E(U)$. 

We consider a topology on $E(U)$ defined as follows. 
Let $V$ be an open subset of $U$ and denote by $V^{\prime}$ the set of prime points of $U$ whose representing chains are eventually contained in $V$. 
The collection of sets $V^{\prime} \subset E(U)$ such that $V$ is an open subset of $U$ is a basis of a topology on $E(U)$. 

The function $\omega:U \rightarrow E(U)$ is a homeomorphism from $U$ onto an open subset of $E(U)$, and we simply identify $U$ with $\omega(U)$. 

For $e \in E(U)$ let  $\alpha(e) = \cap_i cl_{B(U)}V_i$, where $(V_i)$ is a chain representing $e$. 
Then $\alpha(e)$ consists of one point and $\alpha:E(U) \rightarrow B(U)$ is a continuous function whose restriction to $U$ is the inclusion (or more precisely, $\alpha  \circ \omega$ is the identity on $U$). 

$E(U)$ is a compact connected surface with boundary.  
Let $b_{reg}(U)$ denote the set of regular ideal boundary points of $U$.
We have that $\partial E(U)=\cup_{b \in b_{reg} (U)} \alpha \e {-1} (b)$.
When $b \in b_{reg} (U)$, we have that $\alpha^{-1}(b)$ is homeomorphic to a circle, called the \textit{Caratheodory circle of prime ends associated with} $b$. 
We also denote it by $C(b)$.

We have a decomposition $E(U)=U \sqcup \alpha \e {-1} \big(b(U)\big)$, where $\alpha \e {-1} \big(b(U)\big)$ is the set of all prime ends of $U$.
If $b \in b(U)$ is not regular, then the prime end $\alpha^{-1}(b)$ is just a point of $E(U) \setminus \partial E(U)$.

Let $e \in E(U)$ and $(V_i)$ be a chain representing $e$. 
The set $Y(e) = \cap_i cl_S (V_i)$ is called the \textit{impression of }$e$. 
The definition does not depend on the representing chain, and $Y (e)$ is a compact, connected, non-empty subset of $S$.

We say that $p \in S$ is a \textit{principal point} of $e$ if there is a chain $(V_i)$ which represents $e$ such that $fr_U V_i \rightarrow p$. 
The set of principal points of $e$ is called the \textit{principal set} of $e$ and is denoted by $X(e)$. 
It is a non-empty, compact, connected subset of $S$, and $X(e) \subset Y(e) \subset Z(b)$, where $b = \alpha(e)$.

Let $\beta:(0,1] \rightarrow U$ be a path such that $\underset{t \rightarrow 0}{\lim } \thinspace \beta(t)=p$ in $S$ for some $p \in fr_S U$. 
Then there exist $e\in E(U)$ and $b \in b(U)$ such that $\underset{t \rightarrow 0}{\lim } \thinspace \beta(t)=e$ in $E(U)$ and $\underset{t \rightarrow 0}{\lim } \thinspace \beta(t)=b$ in $B(U)$.

If $X(e) =\{p\}$ for some $p \in fr_S U$, we say that $e$ is an \textit{accessible prime end}. 
We have that $e$ is an accessible prime end if and only if there exists a path $\beta:(0,1] \rightarrow U$ such that $\underset{t \rightarrow 0}{\lim }\beta(t) =p$ in $S$ and $\underset{t \rightarrow 0}{\lim}\beta(t) = e$ in $E(U)$. 

Next we would like to present a result that is used in the proof of Theorem \ref{The frontier of domains with fixed prime ends}. It says that accessible prime ends can be represented by chains whose frontiers are contained in arcs of circles. 

We are going to consider $\mathbb{R}^{2}$ equipped with a norm $\parallel \hspace{1.5mm} \parallel$ and denote by $B_r$, $B_r^{o}$ and $C_r$ the closed ball, open ball and the circle with center at $(0,0)$ and radius $r$, respectively. 

\begin{Lemma} \label{preparation lemma}
Let $S$ be a compact connected surface, $U \subset S$ a domain whose frontier contains a finite number of components. 
Let $e$ be an accessible prime end of $U$ and $p$ its principal point. 
Then there exists $\delta>0$ such that for any decreasing sequence $(r_n)_{n \geq 1}$ contained in $(0,\delta )$ with $\underset{n \rightarrow \infty }{\lim }r_n = 0$, there exists a chain $(V_n)$ representing $e$ such that $fr_U V_n \subset C_{r_n}$.
\end{Lemma}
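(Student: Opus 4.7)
The plan is to use the accessibility path $\beta:(0,1]\to U$ to build the chain $(V_n)$ explicitly, with each $V_n$ being a component of $U$ cut by a specific open arc of $C_{r_n}\cap U$. Begin by picking a coordinate chart around $p$ identifying a neighborhood of $p$ with an open ball in $\mathbb{R}^2$ under the given norm, and choose $\delta_0>0$ small enough that $B_{\delta_0}$ lies inside the chart. Because $fr_SU$ has only finitely many connected components, those not passing through $p$ are compact and bounded away from $p$; shrinking $\delta_0$ further, we arrange that $fr_SU\cap B_{\delta_0}$ is contained in the single component of $fr_SU$ that contains $p$. By standard path manipulations (cutting off loops and truncating to the ``last entry'' into $B_{\delta_0}$) we may replace $\beta$ by a simple path that still represents $e$ and satisfies $\beta(t_0)\in C_{\delta_0}$ and $\beta((0,t_0])\subset B_{\delta_0}$ for some $t_0$.

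For each $r\in(0,\delta_0]$, let $\tau(r)=\sup\{t\in(0,t_0]:\beta((0,t])\subset B_r^o\}$, so that $\beta(\tau(r))\in C_r$ and $\beta((0,\tau(r)])\subset B_r$. Given a decreasing sequence $(r_n)\subset(0,\delta)$ with $r_n\to 0$ (for $\delta\le\delta_0$ to be fixed below), let $\alpha_n$ be the connected component of $C_{r_n}\cap U$ containing $\beta(\tau(r_n))$; this is a nondegenerate open arc because $\beta(\tau(r_n))$ is interior to the open set $U$. Define $V_n$ to be the connected component of $U\setminus\alpha_n$ containing the end tail $\beta((0,\tau(r_n)))$. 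A local analysis at each $x\in\alpha_n$ shows that the ``inside'' half-disk (lying in $B_{r_n}^o$) is path-connected along $\alpha_n$ to the end tail without crossing $\alpha_n$, so $\alpha_n\subseteq fr_U V_n$; the reverse inclusion is immediate because distinct components of $U\setminus\alpha_n$ are open and disjoint. Hence $fr_U V_n=\alpha_n$, which is nonempty, connected and contained in $C_{r_n}$. Property (3) of a chain follows automatically because the circles $C_{r_n}$ are pairwise disjoint, so $cl_S(fr_U V_n)\cap cl_S(fr_U V_m)=\emptyset$ for $n\ne m$.

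The main obstacle is the nesting condition $V_{n+1}\subset V_n$ together with showing that $(V_n)$ represents $e$. Nesting amounts to $V_{n+1}\cap\alpha_n=\emptyset$: if this failed, there would exist a path inside $V_{n+1}\subset U\setminus\alpha_{n+1}$ joining the end tail (which lies in $B_{r_{n+1}}^o$) to a point of $\alpha_n\subset C_{r_n}$, and such a path would have to cross $C_{r_{n+1}}$ at a component of $C_{r_{n+1}}\cap U$ different from $\alpha_{n+1}$. The technical heart of the proof is to choose $\delta\le\delta_0$ small enough that no such ``bypass'' arc can reconnect back to $\alpha_n$ within $U\setminus\alpha_{n+1}$; this rests on the fact that $fr_SU\cap B_\delta$ lies entirely in the single component of $fr_SU$ through $p$, together with the local component structure of $U\cap B_\delta$ along the accessing channel of $e$ determined by $\beta$. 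Once nesting is in hand, $(V_n)$ is a chain, and since $fr_UV_n\subset C_{r_n}$ forces $fr_UV_n\to p$ in $S$, the chain is prime. Because each $V_n$ contains the end tail $\beta((0,\tau(r_n)))$ and $\beta(t)\to e$ in $E(U)$ as $t\to 0$, one verifies that $(V_n)$ divides, and is divided by, any chain representing $e$, so $(V_n)$ represents $e$ as required.
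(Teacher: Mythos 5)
The paper itself does not prove this lemma: it defers to \cite{Ol2}, noting only that the argument follows Theorem 16 of \cite{Mather1982} (Mather's shrinking piecewise linear disks). So your proposal must stand on its own, and as written it has a genuine gap. The set-up is fine: with $\alpha_n$ the component of $C_{r_n}\cap U$ through $\beta(\tau(r_n))$ and $V_n$ the component of $U\setminus\alpha_n$ containing the tail, one does get $fr_U V_n=\alpha_n$, so conditions (2) and (3) in the definition of a chain hold. But the two remaining claims --- the nesting $V_{n+1}\subset V_n$ and the fact that $(V_n)$ represents $e$ --- are precisely the content of the lemma, and you prove neither: for the first you write that ``the technical heart is to choose $\delta$ small enough that no bypass arc can reconnect,'' and for the second that ``one verifies'' the two divisibility relations. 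The mechanism you invoke for the first cannot work as stated. The obstruction to nesting lives at the scale of the pair $r_{n+1}<r_n$, both of which range over all of $(0,\delta)$; shrinking $\delta$ only guarantees that $fr_S U\cap B_\delta$ lies in the single frontier component through $p$, and that component can be equally complicated at every scale. Concretely, if $C_{r_{n+1}}\cap U$ has, besides $\alpha_{n+1}$, a second arc $\alpha'$ lying on the frontier of the same component of $U\cap B_{r_{n+1}}^{o}$ as the tail (this happens whenever a piece of $fr_S U$ dips from outside $C_{r_{n+1}}$ into $B_{r_{n+1}}^{o}$ and back, alongside the access channel), then $V_{n+1}$ contains $\alpha'$ together with its outer collar and escapes past $C_{r_{n+1}}$ through a side channel; whether it then reaches $\alpha_n$ is a global question about $U$ that no choice of $\delta$ controls. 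Note also that when $U$ is not simply connected a single crosscut $\alpha_{n+1}$ need not separate $U$ at all, in which case $V_{n+1}=U\setminus\alpha_{n+1}\supset\alpha_n$ and nesting fails outright.

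The representation claim has a gap of the same nature. Even once $(V_n)$ is a chain with $fr_U V_n\to p$ (hence prime), to conclude that it represents $e$ you must exhibit a chain $(W_j)$ defining $e$ that divides $(V_n)$, i.e.\ show that for each $n$ some $W_j$ is disjoint from the fixed arc $\alpha_n$. Since only $fr_U W_j$, and not $W_j$ itself, shrinks to $p$, and since $\alpha_n$ is closed but not compact in $U$, this does not follow from generalities; $W_j\cap\alpha_n$ could be a subarc creeping toward an endpoint of $\alpha_n$ on $fr_S U$ for every $j$. Controlling these side channels using the accessibility of $e$ is exactly what Mather's disk argument (and the proof written out in \cite{Ol2}) is designed to do, and it is what is missing from your proposal.
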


The proof appears in \cite{Ol2} and is inspired in the proof of Theorem $16$ of \cite{Mather1982}, where Mather consider sequences of piecewise linear disks.

Finally, if $U$ is an invariant domain of $f$, we would like to talk about the actions induced on $B(U)$ and $E(U)$ by the restriction of $f$ to $U$.

We have that $f$ maps an ideal boundary component of $U$ to another ideal boundary component of $U$. 
Therefore the restriction of $f$ to $U$ extends to a homeomorphism $f_*:B(U) \rightarrow B(U)$ and the points of $b(U)$ are periodic under $f_*$.
Since $f$ maps irreducible chains to irreducible chains, the restriction of $f$ to $U$ also extends to a homeomorphism $\hat{f}:E(U) \rightarrow E(U)$ and the circles of prime ends of $U$ are all periodic under $\hat{f}$.

Finally we would like to present a version of Cartwright and Littlewood's fixed point theorem, see \cite{Cali}. 

\begin{Lemma} \label{frontiers of V_i and fV_i intersect for i>=i_0}
Let $b \in b(U)$ be a regular ideal boundary points of $U$ and assume that $\hat{f}\big(C(b)\big)=C(b)$.
Suppose that $e \in C(b)$ is a fixed prime end of $U$, $p$ a principal point of $e$ and $(V_i)$ a chain representing $e$ such that $fr_{U}(V_i) \rightarrow p$. 
There exists $i_0$ such that $fr_{U} (V_i) \cap fr_{U} \big(f(V_i)\big) \neq \varnothing $ for $i \geq i_0$.
\end{Lemma}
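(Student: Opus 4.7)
The plan is a proof by contradiction: assume $fr_U(V_i) \cap fr_U(f(V_i)) = \varnothing$ for infinitely many $i$ and derive a contradiction from the chain equivalence, the connectedness of the frontiers, and the area-preservation of $f$. Since $\hat f(e) = e$, the sequence $(f(V_i))$ is itself a chain representing $e$, so it is equivalent to $(V_i)$; consequently for every $i$ there is $j > i$ with $V_j \subset V_i \cap f(V_i)$, and in particular $V_i \cap f(V_i) \neq \varnothing$ for every $i$. Fixing such an $i$ with disjoint frontiers, the connected set $fr_U(V_i)$ must lie entirely in $f(V_i)$ (case A) or in $U \setminus cl_U(f(V_i))$ (case B); symmetrically $fr_U(f(V_i))$ lies entirely in $V_i$ (A') or $U \setminus cl_U(V_i)$ (B'), producing four cases.

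For the three cases other than (A,A') I would argue directly. In case (B,B'), the decomposition $V_i = (V_i \cap f(V_i)) \sqcup (V_i \setminus cl_U(f(V_i)))$ into disjoint open subsets, combined with the connectedness of $V_i$ and $V_i \cap f(V_i) \neq \varnothing$, forces $V_i \subset f(V_i)$; symmetrically $f(V_i) \subset V_i$, hence $V_i = f(V_i)$ and $fr_U(V_i) = fr_U(f(V_i))$, contradicting disjointness. In case (A,B'), the same argument still yields $V_i \subset f(V_i)$, and combined with $fr_U(V_i) \subset f(V_i)$ this gives $cl_U(V_i) \subset f(V_i)$. The open set $f(V_i) \setminus cl_U(V_i)$ cannot be empty, otherwise $f(V_i) = cl_U(V_i)$ would be clopen in the connected $U$, forcing $V_i = U$ and contradicting $fr_U(V_i) \neq \varnothing$; hence it has positive $\mu$-measure, giving $\mu(f(V_i)) > \mu(V_i)$, which contradicts area-preservation. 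Case (B,A') is symmetric.

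I expect the main obstacle to be case (A,A'). Here the hypotheses yield $cl_U(V_i) \cup cl_U(f(V_i)) \subset V_i \cup f(V_i)$, so $V_i \cup f(V_i)$ is clopen in the connected $U$ and hence equal to $U$. To rule this out I would use that $(V_i)$ and $(f(V_i))$ are prime chains for $e$, so $\bigcap_i cl_{E(U)}(V_i) = \{e\} = \bigcap_i cl_{E(U)}(f(V_i))$. Picking any $q \in U \setminus \{e\}$ and using the Hausdorff property of $E(U)$, I would choose an open neighborhood $N$ of $e$ in $E(U)$ avoiding both $q$ and $\hat f^{-1}(q)$; then for $i$ sufficiently large, $V_i \subset N$ and $f(V_i) \subset \hat f(N)$, so $q \notin V_i \cup f(V_i)$, contradicting $V_i \cup f(V_i) = U$. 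Combining, all four cases lead to contradictions for large $i$, yielding $fr_U(V_i) \cap fr_U(f(V_i)) \neq \varnothing$ for $i \geq i_0$.
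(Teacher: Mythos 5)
Your argument is correct and is exactly the (Cartwright--Littlewood style) argument the paper sketches in two lines: chain equivalence of $(V_i)$ and $(f(V_i))$ gives $V_i\cap f(V_i)\neq\varnothing$, and then connectedness of the frontiers plus measure preservation forces the frontiers to meet. Your case analysis, including the use of a small neighborhood of $e$ in $E(U)$ to exclude $V_i\cup f(V_i)=U$, correctly supplies the details the paper omits.
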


The proof is well known and simple. Once the chains $(V_i)$ and $(fV_i)$ are equivalent, we have that the sets $(V_i)$ and $(fV_i)$ intersect for every $i$ sufficiently large and the measure preserving property of $f$ implies that $fr_{U} (V_i) \cap fr_{U} (fV_i) \neq \varnothing $.

\begin{Proposition} \label{Cartwright and Littlewood's fixed point theorem}
Let $e$ be a fixed prime end of $U$ and $p$ a principal point of $e$. Then $f(p)=p$. 
\end{Proposition}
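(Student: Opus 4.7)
The plan is to extract a sequence of points that simultaneously converges to $p$ and to $f(p)$, using the chain representation of $e$ and Lemma \ref{frontiers of V_i and fV_i intersect for i>=i_0}.

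First, since $p$ is a principal point of $e$, there exists a chain $(V_i)$ representing $e$ with $fr_U V_i \to p$ in $S$. Because $e$ is fixed by $\hat f$, the image chain $(f V_i)$ represents the same prime end $e$. Applying $f$ commutes with taking frontier in $U$ (since $f$ is a homeomorphism of $U$), so $fr_U(fV_i) = f(fr_U V_i)$. By continuity of $f$, the sets $f(fr_U V_i)$ converge to $f(p)$ in $S$; in particular $(fV_i)$ is a chain whose frontiers converge to $f(p)$, which reconfirms that $f(p)$ is a principal point of $e$ too.

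Next, I would invoke Lemma \ref{frontiers of V_i and fV_i intersect for i>=i_0}: its hypotheses (regularity of $b$, invariance of $C(b)$, $e$ fixed with principal point $p$ and chain $(V_i)$ with $fr_U V_i \to p$) are exactly what we have. This yields $i_0$ such that for every $i \geq i_0$ we can pick a point
\[
x_i \;\in\; fr_U V_i \,\cap\, fr_U(fV_i).
\]
Since $x_i \in fr_U V_i$ and $fr_U V_i \to p$ in $S$, we conclude $x_i \to p$ in $S$. On the other hand, $x_i \in fr_U(fV_i) = f(fr_U V_i)$, so $x_i = f(y_i)$ with $y_i \in fr_U V_i$, hence $y_i \to p$, and by continuity of $f$ we get $x_i = f(y_i) \to f(p)$ in $S$.

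Uniqueness of limits in the Hausdorff space $S$ then forces $f(p) = p$. There is no real obstacle here beyond checking that the hypotheses of Lemma \ref{frontiers of V_i and fV_i intersect for i>=i_0} are met (which they are, verbatim) and confirming that $f$ commutes with $fr_U$ on open sets, which is immediate since $f$ is a homeomorphism of $S$ preserving $U$. The measure preserving property of $f$ is used only indirectly, via the cited lemma.
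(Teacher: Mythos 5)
Your proof is correct and follows essentially the same route as the paper's: both apply Lemma \ref{frontiers of V_i and fV_i intersect for i>=i_0} to a chain with $fr_U V_i \to p$, extract points lying in $fr_U V_i \cap fr_U(fV_i)$, and conclude $f(p)=p$ from continuity of $f$ and uniqueness of limits. The extra verifications you include (that $(fV_i)$ represents $e$ and that $f$ commutes with $fr_U$) are fine but not different in substance.
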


Let $(V_i)$ be a chain defining $e$ such that $fr_{U} (V_i) \rightarrow p$. 
From Lemma \ref{frontiers of V_i and fV_i intersect for i>=i_0} there exists $i_0$ such that for $i \geq i_0$ there exists a point $x_i \in fr_{U} (V_i)$ such that $f(x_i)$ also belongs to $fr_{U} (V_i)$. 
Since $fr_{U} (V_i) \rightarrow p$, we have $x_i \rightarrow p$ and $f(x_i) \rightarrow p$ implying that $f(p)=p$.



\section{The structure and dynamics of the frontier of domains with fixed prime ends}

From now on, $S$ will be a connected compact orientable surface and $f:S \rightarrow S$ an orientation preserving $C\e 1$ diffeomorphism of $S$ such that $f \e * \eta =\eta$, for some non degenerate $2$-form on $S$. 
We will still denote by $\mu$ the invariant measure induced by $\eta$. 
$U$ will be an $f$-invariant domain of $S$ such that $fr_S U$ is compact and has a finite number of connected components.

We would like to carefully define \textit{sectors}, \textit{containment of sectors} and \textit{accumulation on sectors} of saddles. For this we need to establish some notation.

Let $p$ be a fixed point of $f$. 

We say that $p$ is \textit{non degenerate} if $1$ is not an eigenvalue of $df_p$.

A non degenerate fixed point $p$ is \textit{of saddle type} or just a \textit{saddle} if the eigenvalues of $df_p$ are real numbers $\lambda$ and $\lambda \e {-1}$.

The \textit{stable} and \textit{unstable invariant manifolds} of $p$ are defined as:

\begin{center}
$W_{p}^{s} =\{x \in S \thinspace \vert \thinspace  \lim _{n \rightarrow \infty }f^{n}x =p\}$    
and 
\end{center}

\begin{center} $W_{p}^{u} =\{x \in S \thinspace \vert \thinspace  \lim _{n \rightarrow  -\infty }f^{n}x =p\}$, respectively.
\end{center}
$W_{p}^{s}$ and $W_{p}^{u}$ are injectively immersed connected curves.  
The components of $W_{p}^{s} -\{p\}$ and $W_{p}^{u} -\{p\}$, with respect to the topology induced by a parameterization, are called \textit{branches}. 
By a \textit{connection} we mean a branch which is contained in the intersection of two invariant manifolds (possibly of two different fixed points). 

Let $p$ be a saddle of $f$ and $\mathbb{V}$ be the collection of all open neighborhoods $V$ of $p$, where there exist continuous coordinates with $p$ at the origin and in which $f(x,y) =(\lambda x ,\lambda \e {-1} y)$, where $\lambda,\lambda \e {-1} \in \mathbb{R}$.

The components of $W_{p}^{s} \cap V$ and $W_{p}^{u} \cap V$ which contain $p$ are called the \textit{local invariant manifolds of $p$ defined by $V$}. 
The \textit{local branches of $p$ defined by $V$} are the components of the complement of $p$ in the local invariant manifolds defined by $V$. 

The set $\widetilde{V}=\{(x,y) \in V \mid x \neq 0 \text{ and } y \neq 0\}$ and its components are open in $S$.
There are four components of $\widetilde{V}$ that contain $p$ in their closures in $S$.
We will call these sets the four \textit{sectors of p defined by V} and will use $\Sigma_i (V)$, $1 \leq i \leq 4$, to denote them.

$\Sigma_1 (V)$ will be the component where points have coordinates $x>0$ and $y>0$,
$\Sigma_2 (V)$ the component where  $x<0$ and $y>0$, $\Sigma_3 (V)$ the component where  $x<0$ and $y<0$ and $\Sigma_4 (V)$ the component where $x>0$ and $y<0$.

If $V_1,V_2 \in \mathbb{V}$ then $\Sigma_i (V_1) \cap \Sigma_j (V_2) =\varnothing$ if $i \neq j$, or $\Sigma_i (V_1)$ $\sim_p$ $\Sigma_i (V_2)$ define the same germ at $p$.

A \textit{sector of $p$}, denoted by $\Sigma_i(p)$ or just $\Sigma_i$, is the germ at $p$ of the sectors of p defined by V, $\Sigma_i (V)$, where $V \in \mathbb{V}$. $p$ has four sectors, $\Sigma_i(p)$, $1 \leq i \leq 4$.
Sometimes we will simplify writing by referring to the sectors through their representatives.

A set $\mathcal{A}$ \textit{contains a sector} $\Sigma_i(p)$, if $\mathcal{A}$ contains a sector $\Sigma_i(V)$ that represents $\Sigma_i(p)$, for some $V \in \mathbb{V}$.

Now we would like to say what it means for a set $\mathcal{B}$ to accumulate on $p$
with points coming through a sector $\Sigma_i(p)$.
A set $\mathcal{B}$ \textit{accumulates on a sector $\Sigma_i(p)$}, if $\big(cl_S\mathcal{B}\big) \cap \big(\Sigma_i(V)\big) \neq \varnothing$ for every $V \in \mathbb{V}$. 

We say that a \textit{branch $L$ and a sector $\Sigma $} are \textit{adjacent} if 
a local branch of $L$ is contained in the closure of $\Sigma $ in $S$. 
\textit{Two branches are adjacent} if they are adjacent to a single sector. 
\textit{Two sectors are adjacent} if a local branch is contained within the closure of each.

We are going to use the notation ${o}(x)=o(x,f)$ for the orbit of $x$ under $f$ $\{f \e n {(x)} \in S \mid n \in \mathbb{Z}\}$.

For a branch or invariant manifold $L$ and $q_1,q_2 \in L$ we will denote the closed arc of points of $L$ between $q_1$ and $q_2$ by $L[q_1,q_2]$. The open and half-open arcs will be denoted by $L(q_1,q_2)$, $L[q_1,q_2)$ and $L(q_1,q_2]$, respectively.

We say that a fixed point $p$ is \textit{elliptic} if $df_p$ is a rotation by an angle $\theta \neq 0$. 
If $p$ is a point of period $\tau$ we use $f \e \tau$ to define these concepts for $p$.

In this section we will prove the following result.

\begin{Theorem} \label{The frontier of domains with fixed prime ends}

Let $S$ be a compact connected orientable surface and $f:S \rightarrow S$ an area preserving orientation preserving $C \e 1$ diffeomorphism of $S$.
Assume that $U$ is an invariant domain of $S$ such that $fr_S{U}$ has a finite number of connected components. 

Let $b$ be a regular ideal boundary point of $U$ such that $f_*(b)=b$ and $\hat{f}:C(b) \rightarrow C(b)$ the homeomorphism on the corresponding circle of prime ends.
Suppose that all fixed points of $f$ in $Z(b)$ are non degenerate. 

Assume that there exists a fixed prime end $e \in C(b)$. 
Then we know the following.

\begin{enumerate}
\item If $p$ is the principal point of $e$ then $p$ is also a fixed point of $Z(b)$ and $p$ is a saddle.
\item The mapping $\hat{f}:C(b) \rightarrow C(b)$ is orientation preserving and has a finite number of fixed points.
\item There exists a finite singular covering $ \phi :C(b) \rightarrow Z(b)$, which is a semiconjugacy between the mapping of prime ends on $C(b)$ and the restriction of $f$ to $Z(b)$.
\item The impression $Z(b)$ is the connected union of a finite number of saddle connections and the corresponding saddles.
\end{enumerate}

\end{Theorem}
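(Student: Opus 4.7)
The plan is to prove the four items in order.

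For item (1), the principal point $p$ of $e$ is fixed by Cartwright--Littlewood (Proposition \ref{Cartwright and Littlewood's fixed point theorem}). Since $p$ is non-degenerate and $f$ is area- and orientation-preserving, $df_p \in SL_2(\mathbb{R})$ has either real eigenvalues $\lambda,\lambda^{-1}$ (saddle) or complex conjugates on the unit circle (elliptic). To rule out the elliptic case, I would apply the preparation lemma (Lemma \ref{preparation lemma}) to realize $e$ by a chain $(V_n)$ whose frontiers $fr_U V_n$ lie on small Euclidean circles $C_{r_n}$ around $p$. Near an elliptic fixed point $df_p$ is a nontrivial rotation, so for $n$ large the arcs $fr_U V_n$ and $f(fr_U V_n)$ are approximately rigid rotations of one another, and the angular shift forces them to be disjoint; this contradicts Lemma \ref{frontiers of V_i and fV_i intersect for i>=i_0}.

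For item (2), orientation preservation of $\hat{f}$ is inherited from $f$ through the naturality of the prime-ends construction. For finiteness: by item (1) each fixed prime end has a saddle of $f$ lying in $Z(b)$ as its principal point. Since $Z(b)$ is compact and its fixed points are non-degenerate (hence isolated), there are only finitely many such saddles. A local analysis in saddle coordinates shows that at most four fixed prime ends can share a common principal point (one per sector $\Sigma_i$), so there are only finitely many fixed prime ends in total.

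For items (3) and (4), I enumerate the fixed prime ends cyclically on $C(b)$ as $e_1, \ldots, e_N$. On each open arc $(e_j, e_{j+1})$ the map $\hat{f}$ has no fixed point, so standard circle dynamics gives monotone convergence of orbits to $e_j$ or $e_{j+1}$ under iteration. I would then define $\phi:C(b) \to Z(b)$ by sending each prime end to its principal point, using Lemma \ref{preparation lemma} and the area-preservation hypothesis to show that every prime end of $C(b)$ is accessible (so $\phi$ is single-valued) and continuous. The semiconjugacy property $\phi \circ \hat{f} = f \circ \phi$ is then automatic from the definition of principal points. The image of the open arc between $e_j$ and $e_{j+1}$ lies in the invariant manifolds of the saddles corresponding to $e_j$ and $e_{j+1}$; area-preservation together with the dichotomy that an invariant branch whose closure contains only non-degenerate fixed points is either a connection or accumulates on both adjacent sectors forces this image to be precisely a saddle connection. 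Item (4) then follows since $Z(b) = \phi(C(b))$ is the union of finitely many saddles and these connections, with connectedness inherited from $C(b)$.

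The main technical obstacle is the construction of $\phi$ in item (3): proving that every prime end of $C(b)$ is accessible and that the map to principal points is continuous. This requires a combined use of the prime-ends machinery of subsection $2.6$ (crucially the preparation lemma giving circular crosscuts), the area-preservation hypothesis (to convert equivalence of chains into actual intersections of frontiers), and local analysis of invariant manifolds near each saddle to identify each arc of $C(b) \setminus \{e_1,\ldots,e_N\}$ with a specific branch. A subsidiary difficulty is ruling out ellipticity in item (1), which, while following from a rotation argument, still requires careful $C^1$ bookkeeping to avoid appealing to smoother linearization or KAM-type results that are unavailable at this regularity.
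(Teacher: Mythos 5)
Your reduction of item (1) to ``the arcs $fr_U V_n$ and $f(fr_U V_n)$ are forced to be disjoint, contradicting Lemma \ref{frontiers of V_i and fV_i intersect for i>=i_0}'' does not work. Lemma \ref{preparation lemma} only guarantees that the crosscuts $\beta_n = fr_U V_n$ are connected arcs lying on circles $C_{r_n}$ with $r_n \rightarrow 0$; it gives no control whatsoever on their \emph{angular} extent. An arc subtending an angle larger than the rotation angle $\alpha$ will meet its approximate rotate, so no contradiction arises (for instance, $U$ could contain an almost full punctured neighborhood of $p$, making the crosscuts nearly full circles). The paper's argument runs in exactly the opposite direction: it \emph{uses} the intersections $f^{k}(\beta)\cap f^{k+1}(\beta)\neq\varnothing$ (guaranteed by Lemma \ref{frontiers of V_i and fV_i intersect for i>=i_0} applied to $f$ and to $f^{n}$) together with quantitative estimates on a polar lifting $F$ of $f$ to show that $\beta\cup f\beta\cup\dots\cup f^{n}\beta$ contains a closed loop $\xi\subset U$ winding around $p$. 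The contradiction then comes from connectedness of $Z(b)$: since $b$ is regular, $Z(b)\neq\{p\}$ and exits the small ball containing $\xi$, yet $p\in Z(b)$ lies inside the disk bounded by $\xi$, so $Z(b)$ would have to cross $\xi\subset U$. The elliptic exclusion has to be rebuilt along these lines; the disjointness claim is false as stated.

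In items (3)--(4) there are two further problems. First, a circularity: the dichotomy ``an invariant branch whose closure contains only non-degenerate fixed points is either a connection or accumulates on both adjacent sectors'' is Proposition \ref{accumulation results}(1), which in this paper is \emph{deduced from} Theorem \ref{The frontier of domains with fixed prime ends}; you cannot invoke it inside the proof without an independent argument. The paper avoids it: once every $a$ in an open arc $(e_1,e_2)$ between consecutive fixed prime ends is shown to have a one-point impression $Y(a)$ that lands, after suitable forward and backward iterates, on local branches of the two end saddles, the image of the arc is a branch contained in two invariant manifolds, hence a connection by definition. Second, surjectivity: you assert $Z(b)=\phi\big(C(b)\big)$ without argument; this is exactly Proposition \ref{impression union of impressions}, $Z(b)=\cup_{e\in C(b)}Y(e)$, and requires a proof (via compactness of $E(U)$). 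Finally, the ``main technical obstacle'' you flag --- that every prime end of $C(b)$ has one-point impression and that $\phi$ is continuous --- is precisely where the substance of items (2)--(4) lies (the cone construction of Lemma \ref{local semiconjugacy}, which shows an entire open square adjacent to the saddle is contained in $U$, followed by propagation along the circle dynamics); your proposal defers this step rather than resolving it.
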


The proof of Theorem \ref{The frontier of domains with fixed prime ends} will be given in the following subsections of Section $3$.

\subsection{The principal point of an accessible fixed prime end is of saddle type}

In this subsection we give a summary of the proof of item $(1)$.

We will only present the important ideas. 
We removed all calculations, long sequences of inequalities and reorganized and summarized the argument, in order to give only a very clear exposition of the structure of the proof.
A complete proof with all calculations is in \cite{Ol2}.
There, interested readers will be able to find all details.

If $p$ is a principal point of a fixed prime end $e \in C(b)$, then by Proposition \ref{Cartwright and Littlewood's fixed point theorem} we know that $f(p)=p$.
Our hypothesis that fixed points of $f$ contained in $Z(b)$ are non degenerate implies that they are isolated fixed points. 
Since the principal set of $e$ is connected and all principal points are fixed, we have that $X(e)$ consists of a one point set $\{p\}$ and $e$ is accessible.

We would like to show that $p$ is of saddle type.
We will assume that $p$ is elliptic and obtain a contradiction.

We will write coordinates in the plane as a complex numbers.
Let $V$ be a neighborhood of $p$ in $S$ where there exist coordinates with $p$ at the origin and in which $f'(0)=e \e {i\alpha}$ with $0<\alpha<2 \pi$.

The idea resembles the role of the minimal invariant circles of a Moser stable fixed point in $fr_S U$ used by Mather.
We consider a chain $(V_i)$ that represents $e$ and such that $\beta_i = fr_U V_i$ are arcs of circles with center at $0$. 
If the arcs are very small, then the dynamics of $f$ and of $f'(0)$ are close enough, so that a finite number of arcs $f \e k (\beta_i)$ rotate around $p$ and satisfy $f \e k (\beta_i) \cap f \e {k+1} (\beta_i) \neq \varnothing$ for $0 \leq k \leq n-1$ and $f \e n (\beta_i) \cap \beta_i \neq \varnothing$.
In this way they close a curve contained in $U$ with $p$ inside it. 

Then it is easy to arrive at a contradiction.


\vspace{2mm}

\textbf{1) Estimating the distance from iterates of $z$ under $f$ and $f'(0)$.}
Let $B_{\delta } \e \circ$ be the open ball of radius $\delta$ centered at that origin with $\delta$ small enough so that $B_{\delta } \e \circ \subset V$ and $f(B_{\delta } \e \circ) \subset V$. 
For points $z$ close to $0$ we will need to estimate the distance between iterates of $z$ under $f$ and $f'(0)$.

Firstly, we would like to make an observation about complex numbers

Let $w$ and $w' \in \mathbb{C}$ and $\epsilon < 1$ be such that $\left \vert w'-w \right \vert  <\epsilon \left \vert w \right \vert $. 
If  $w=re \e {i\eta }$ and $r' =\left \vert w' \right \vert $ then there exists a unique real number $\eta' $ such that 
\begin{enumerate}
    \item $w'=r'e \e {i\eta'}$.
    \item $\left \vert r'-r \right \vert < \epsilon \hspace{0.5mm} r$.
    \item $\left \vert \eta'-\eta \right \vert < \frac{\pi}{2} \hspace{0.5mm} \epsilon $.
\end{enumerate}

For $\epsilon\in(0,1)$, let $\delta>0$ be such that if $\left\vert z \right\vert<\delta$ then $\left\vert f(z)-f'(0)z \right\vert<\epsilon\left\vert z \right\vert$.

From our previous observation about complex numbers, it is easy to see the following.

\begin{Lemma} \label{first estimate between f(z) and f'(0)z} 
Let $z =re \e {i\theta}$ with $r<\delta$ and $r'=\left\vert f(z) \right\vert$. 
Then there exists a unique real number $\theta'$ such that
\begin{enumerate}
    \item $f(z)=r'e \e {i\theta'}$.
    \item $\left\vert r'-r \right\vert<\epsilon r$.
    \item $\left\vert \theta'-(\theta+\alpha) \right\vert<\frac{\pi}{2}\epsilon$.
\end{enumerate}
\end{Lemma}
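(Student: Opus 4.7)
The statement is essentially a direct application of the preceding observation about complex numbers, transported via the factorization $f'(0)z = e^{i\alpha}\cdot re^{i\theta} = re^{i(\theta+\alpha)}$. So the plan is to set $w = f'(0)z$ and $w' = f(z)$, verify that the hypothesis of the observation holds, and read off the three conclusions with $\eta = \theta+\alpha$.

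In more detail: assume first that $r>0$ (the case $r=0$ is trivial since $f(0)=0$). By the choice of $\delta$, we have
\[
|w'-w| = |f(z) - f'(0)z| < \epsilon|z| = \epsilon r = \epsilon|w|,
\]
so the preceding observation applies to $w,w'$. This gives a unique real number $\eta'$ with $w' = r'e^{i\eta'}$, $|r'-r|<\epsilon r$, and $|\eta'-\eta|<\tfrac{\pi}{2}\epsilon$. Setting $\theta' := \eta'$ and using $\eta = \theta+\alpha$ yields exactly (1), (2), (3).

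So the real content is the observation itself, and I would justify it as follows. Since $|w'-w|<\epsilon|w|$ with $\epsilon<1$ and $w\neq 0$, the reverse triangle inequality gives $|w'|\geq(1-\epsilon)|w|>0$, so $w'\neq 0$ and its argument is defined modulo $2\pi$. The bound $\bigl||w'|-|w|\bigr|\leq|w'-w|<\epsilon|w|$ is item (2). For the angular bound, note that $w'$ lies in the open disk of radius $\epsilon|w|$ centered at $w$; the angle subtended at the origin by any point of this disk, measured from the ray through $w$, is at most $\arcsin(\epsilon)$. Since $\arcsin(\epsilon)<\tfrac{\pi}{2}\epsilon$ for $\epsilon\in(0,1)$ (equality at the endpoints, and $\arcsin$ is convex on $[0,1]$ with initial slope $1<\tfrac{\pi}{2}$), one can pick the unique representative $\eta'$ of the argument of $w'$ satisfying $|\eta'-\eta|<\tfrac{\pi}{2}\epsilon$.

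There is no real obstacle: the argument is elementary planar geometry together with the definition of $\delta$. The only thing to be careful about is the uniqueness of $\theta'$, which is forced by the requirement $|\theta'-(\theta+\alpha)|<\tfrac{\pi}{2}\epsilon<\tfrac{\pi}{2}$ — this width is strictly less than $\pi$, so at most one lift of $\arg w'$ can satisfy it. The role of the lemma in the subsequent argument is to compare, in polar coordinates centered at the elliptic fixed point $p$, the exact iterate $f(z)$ with the pure rotation $f'(0)z$; iterating this estimate will eventually show that finitely many iterates of a small arc $\beta_i = fr_UV_i$ close up around $p$, yielding the desired contradiction to $p$ being elliptic.
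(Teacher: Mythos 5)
Your proposal is correct and follows exactly the paper's route: the paper derives the lemma by applying the preceding observation about complex numbers to $w=f'(0)z=re^{i(\theta+\alpha)}$ and $w'=f(z)$, using the defining property of $\delta$. You additionally supply a proof of the observation itself (the $\arcsin(\epsilon)<\tfrac{\pi}{2}\epsilon$ geometry), which the paper leaves as "easy to see"; that justification is sound.
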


From this, we conclude that the mapping $F:\mathbb{R} \times (0,\delta ) \rightarrow \mathbb{R} \times (0,+\infty )$ defined by $F(\theta,r)=(\theta',r')$ is a lifting of $f:B_{\delta }-\{0\} \rightarrow \mathbb{C}$ with the following properties: 
\begin{itemize}
    \item $\left \vert r'-r \right \vert < \epsilon r$.
    \item $\left \vert \theta'-(\theta+\alpha) \right \vert <\frac{\pi}{2}\epsilon$
\end{itemize}
for every $(\theta,r) \in \mathbb{R} \times (0,\delta)$.
  
We are going to use $(\theta_j,r_j) =F \e {j} (\theta_0,r_0)$ to denote iterates of a point $(\theta_0,r_0)$.

\vspace{2mm}

\textbf{2) The construction.}
Let $B \subset V$ be a closed ball centered at $0$ such that $Z(b)$ is not contained in $B$.

Let $n$ such that $n \alpha >2 \pi$ and $n (2\pi - \alpha ) > 2 \pi$.

Let $\epsilon \in (0,1)$ such that 
\begin{equation} \label{condition on epsilon 1}
    n \frac{\pi}{2} \epsilon < n \alpha - 2 \pi,
\end{equation}
\begin{equation} \label{condition on epsilon 2}
    n \frac{\pi}{2} \epsilon < n (2 \pi - \alpha ) - 2 \pi, 
\end{equation} 
\begin{equation} \label{condition on epsilon 3}
    \alpha + \frac{\pi}{2} \epsilon <2 \pi \text{  and  } \alpha - \frac{\pi}{2} \epsilon >0.
\end{equation}

Let $\delta>0$ be such that
\begin{enumerate}
    \item $B_{\delta} \subset B \e \circ$.
    \item $f(B_{\delta } \e \circ) \subset V$.
    \item If $\left \vert z \right \vert < \delta$ then $\left \vert f(z)-f'(0)z \right \vert  <\epsilon \left \vert z \right \vert$.
\end{enumerate}

Suppose that the coordinate $r_0$ of the initial point is a positive number that satisfies $r_0 (1+\epsilon ) \e n < \delta$. 

Then from item $(2)$ of Lemma \ref{first estimate between f(z) and f'(0)z} we have $r_j<r_0 (1 +\epsilon) \e j <\delta$ for $1 \leq j \leq n$.
Therefore, if
\begin{equation} \label{r_0}
    r_0 < \frac{\delta}{(1+\epsilon) \e n}
\end{equation}
then the iterates $(\theta_j,r_j) =F \e {j} (\theta_0,r_0)$ are well defined for $1 \leq j \leq n$. 

From item $(3)$ of Lemma \ref{first estimate between f(z) and f'(0)z} we have that 
\begin{equation} \label{estimate fˆj(p) and F'ˆj(0)}
    \left \vert \theta_j -(\theta_0 + j \alpha) \right \vert <j \frac{\pi}{2} \epsilon \hspace{2mm} \text{for} \hspace{2mm} 1 \leq j \leq n.
\end{equation}

From Lemma \ref{preparation lemma}, we know that there exists $\delta_1 >0$ such that for any sequence $(\rho_i)$ contained in $(0,\delta_1)$ such that $lim_{i \rightarrow \infty} \rho_i = 0$, there exists a chain $(V_i)$ representing $e$ such that $fr_{U}V_i \rightarrow p$ and the sets $\beta_i := fr_{U}V_i$ are arcs of circles of radius $\rho_i$ and center at $0$.

There exists $i_1$ such that $\beta_i \subset C_{\rho_i}$ with $\rho_i < \frac{\delta}{(1+\epsilon) \e n}$ for $i \geq i_1$.

If we apply Lemma \ref{frontiers of V_i and fV_i intersect for i>=i_0} to $f$ and $f \e n$ at the same time, we conclude that there exists $i_2$ such that  $f(\beta_i) \cap \beta_i \neq \varnothing$ and $f \e n (\beta_i) \cap \beta_i \neq \varnothing$ for $i \geq i_2$.

Therefore, if we choose $\beta$ to be one of the arcs $\beta_i$ with $i \geq \max\{i_1,i_2\}$, then $\beta$ satisfies the following.

\begin{enumerate}
    \item If the radius of $\beta$ is $r_0$, then $r_0$ that satisfies equation (\ref{r_0}), and the first $n$ iterates of $\beta$ by $F$ are well defined.
    \item $f \e {l}(\beta) \cap f \e {l+1}(\beta ) \neq \varnothing$ for $0 \leq l-1 \leq n$ and $f \e n(\beta) \cap \beta \neq \varnothing$.
\end{enumerate}


\vspace{2mm}

\textbf{3) The iterates of $\beta$ turn around $p$ at least once to close a curve contained in $U$ with $p$ inside.}
Let $\hat{\beta}=(a,b) \times \{r_0\}$ be a lifting of $\beta$ with $0 \leq a < 2 \pi$. 
We have $b-a < 2\pi$. 

Using equations  (\ref{condition on epsilon 3}) and (\ref{estimate fˆj(p) and F'ˆj(0)}) 
it is possible to show the following.

\begin{Lemma} \label{two cases}
$F\hat{\beta} \cap \hat{\beta} \neq \varnothing$ or $F\hat{\beta} \cap (\hat{\beta} + (2\pi,0)) \neq \varnothing $.
\end{Lemma}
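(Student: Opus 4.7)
The plan is to use the intersection $f(\beta) \cap \beta \neq \varnothing$ given by item (2) of the construction and lift it to the $(\theta,r)$-cover of $B_\delta^\circ \setminus \{0\}$. Pick any $z \in f(\beta) \cap \beta$ and write $z = f(w)$ with $w \in \beta$. Lift $w$ and $z$ to points $\hat{w} = (\theta_w, r_0)$ and $\hat{z} = (\theta_z, r_0)$ of $\hat{\beta} = (a,b) \times \{r_0\}$. Since $F$ is a lift of $f\vert_{B_\delta^\circ \setminus \{0\}}$ through the covering map $(\theta, r) \mapsto re^{i\theta}$, the point $F(\hat{w})$ also projects to $z$, so there is a unique integer $k$ with $F(\hat{w}) = \hat{z} + (2\pi k, 0)$. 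In particular $F(\hat{w}) \in F\hat{\beta} \cap (\hat{\beta} + (2\pi k, 0))$, so it suffices to prove that $k \in \{0,1\}$.

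To pin down $k$, I would read off angular coordinates. By Lemma \ref{first estimate between f(z) and f'(0)z} the angular coordinate of $F(\hat{w})$ equals $\theta_w + \alpha + \xi$ for some $\xi$ with $|\xi| < \tfrac{\pi}{2}\epsilon$, and the angular coordinate of $\hat{z} + (2\pi k, 0)$ equals $\theta_z + 2\pi k$. Equating the two yields
\[
2\pi k \;=\; (\theta_w - \theta_z) \;+\; (\alpha + \xi).
\]
The first summand lies in $(-(b-a), b-a) \subset (-2\pi, 2\pi)$ because $\theta_w, \theta_z \in (a,b)$ with $b - a < 2\pi$, while the second lies in $(0, 2\pi)$ by inequality (\ref{condition on epsilon 3}). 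Hence $2\pi k \in (-2\pi, 4\pi)$, which forces $k \in \{0, 1\}$ and yields exactly the dichotomy in the statement.

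There is no real obstacle here: the lemma is a bookkeeping computation whose whole content is that the bounds in (\ref{condition on epsilon 3}) were tailored precisely to exclude $k = -1$ and $k = 2$. The only mild point requiring care is the first step, where one checks that $F(\hat{w})$ and $\hat{z}$ are genuinely two lifts of the same point $z$ under the covering $(\theta,r) \mapsto re^{i\theta}$; this is immediate because $F$ was explicitly constructed in the preceding paragraph as a lift of $f$ on $\mathbb{R} \times (0, \delta)$ and the arc $\hat{\beta}$ was chosen inside this domain.
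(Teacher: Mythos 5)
Your proposal is correct and follows essentially the same route the paper indicates: the paper's own (sketched) argument is precisely to observe that $F\hat{\beta} \cap (\hat{\beta}+(2k\pi,0)) \neq \varnothing$ for some integer $k$ and then to use the angular estimate together with inequality (\ref{condition on epsilon 3}) to force $k=0$ or $1$. Your write-up supplies the details the paper defers to \cite{Ol2}, including the correct observation that the intersection point $z \in f(\beta)\cap\beta$ has modulus $r_0$, so that $F(\hat{w})$ and $\hat{z}$ are indeed two lifts of the same point differing by a horizontal translation by $2\pi k$.
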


To prove this, one uses the fact that that $F \hat{\beta } \cap (\hat{\beta } +(2 k \pi  ,0)) \neq \varnothing $ for some $k \in \mathbb{Z}$ and shows that $k =0$ or $1$. 

The next step is to use equations (\ref{condition on epsilon 1}) and (\ref{estimate fˆj(p) and F'ˆj(0)}) to prove the following.

\begin{Lemma} \label{first case of the lifting}
$F \e n \hat{\beta} \cap (\hat{\beta} +(2k\pi,0)) \neq \varnothing$ for some $k \geq 1$.
\end{Lemma}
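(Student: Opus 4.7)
The plan is to combine the intersection property $f^n(\beta)\cap\beta\neq\varnothing$ supplied by item $(2)$ of the construction with the angular estimate (\ref{estimate fˆj(p) and F'ˆj(0)}) and the lower bound (\ref{condition on epsilon 1}). The underlying geometric picture is that after $n$ iterations the arc has rotated by strictly more than $2\pi$, so any lift of an intersection point is forced at least one full turn beyond its starting lift.

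First, I would pick any $q\in f^n(\beta)\cap\beta$, choose $\tilde q\in\beta$ with $f^n(\tilde q)=q$, and let $\hat q=(\theta_0,r_0)$ be the unique lift of $\tilde q$ lying in $\hat\beta$. By item $(1)$ of the construction the iterate $F^n(\hat q)=(\theta_n,r_n)$ is defined. Since both $\tilde q$ and $f^n(\tilde q)$ lie on the circle $C_{r_0}$ supporting $\beta$, the radial part of the lifting forces $r_n=|f^n(\tilde q)|=r_0$, so $F^n(\hat q)$ is a lift of $q$ lying on the line $\mathbb{R}\times\{r_0\}$. Since $q\in\beta$ and the lifts of $\beta$ in that line are exactly the translates $\hat\beta+(2k\pi,0)$, there is a unique $k\in\mathbb{Z}$ with
\[ a+2k\pi<\theta_n<b+2k\pi. \]

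To conclude that $k\geq 1$, I would use (\ref{estimate fˆj(p) and F'ˆj(0)}) to write $\theta_n>\theta_0+n\alpha-n\tfrac{\pi}{2}\epsilon$; together with (\ref{condition on epsilon 1}) this gives $\theta_n>\theta_0+2\pi$. On the other hand $\theta_0>a$ and $b-a<2\pi$, so $b+2k\pi<a+2(k+1)\pi<\theta_0+2(k+1)\pi$. Combining the two bounds,
\[ \theta_0+2\pi<\theta_n<\theta_0+2(k+1)\pi, \]
which forces $k\geq 1$, as required.

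I do not anticipate a serious obstacle here: the argument is essentially bookkeeping once the angular estimate and the hypothesis $n\alpha>2\pi$ are in place. The only subtlety is verifying that the radial coordinate returns exactly to $r_0$ after $n$ iterations, which is what makes the shift index $k$ unambiguous; this is automatic from $\beta\subset C_{r_0}$ and $f^n(\tilde q)\in\beta\subset C_{r_0}$.
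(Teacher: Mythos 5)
Your argument is correct and is exactly the route the paper intends (the paper only sketches it by naming equations (\ref{condition on epsilon 1}) and (\ref{estimate fˆj(p) and F'ˆj(0)}) as the ingredients): lift a point of $f^n(\beta)\cap\beta$, use the angular estimate to get $\theta_n>\theta_0+2\pi$, and use $b-a<2\pi$ to pin down the translate index. The radial bookkeeping you flag as the only subtlety is in fact automatic from $P\circ F^n=f^n\circ P$, since $F^n(\hat q)$ must project onto $q\in\beta\subset C_{r_0}$.
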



\vspace{2mm}

\textbf{4) The proof when $F\hat{\beta} \cap \hat{\beta} \neq \varnothing$.}

As we have seen in Lemma \ref{two cases}, we have two possibilities: $F\hat{\beta} \cap \hat{\beta} \neq \varnothing$ or $F\hat{\beta} \cap (\hat{\beta} + (2\pi,0)) \neq \varnothing$.
We will finish proving item $(1)$ of Theorem \ref{The frontier of domains with fixed prime ends} in the case where $F\hat{\beta} \cap \hat{\beta} \neq \varnothing$. 

We construct $\xi$ in the following way. 

We have $F \e {j+1} \hat{\beta} \cap F \e j \hat{\beta} \neq \varnothing $ for $0 \leq j \leq n-1$ and by Lemma \ref{first case of the lifting} $F \e n \hat{\beta} \cap (\hat{\beta} +(2k\pi,0)) \neq  \varnothing$ for some $k \geq 1$. 

This implies that the union $\hat{\beta} \cup F \hat{\beta} \cup \ldots \cup F \e n \hat{\beta} \cup (\hat{\beta} +(2k \pi , 0))$ is path connected and therefore this union contains a path $\hat{\xi}$
connecting any point $(\theta_{0},r_{0}) \in \hat{\beta}$ to $(\theta _{0} +2k \pi , r_{0})$ for some $k \geq 1$. 

Let $P: \mathbb{R} \times (0,\infty) \rightarrow \mathbb{R} \e 2 \setminus\{(0,0)\}$ be the covering map and $\xi = P( \hat\xi )$.

\begin{Lemma} \label{contradiction}
It follows that $\xi$ is a closed path of $S$ that satisfies the following.
    \begin{enumerate}
    \item $\xi \subset U$.
    \item $\xi \subset B_{\delta } \e \circ -\{0\}$.
    \item If $B'$ is the component of $S \setminus \xi$ which contains $p=0$ then
    \begin{enumerate}
        \item $B' \subset B \e \circ$.
        \item $fr_S B' \subset \xi$.
    \end{enumerate}
\end{enumerate}
\end{Lemma}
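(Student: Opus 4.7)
My plan is to verify the four claims in order, using the construction of $\hat\xi$ from Step 2 together with the estimates from Step 1. For (1), note that $\xi = P(\hat\xi)$ lies in $\beta \cup f(\beta) \cup \cdots \cup f^n(\beta)$, because $P$ sends $\hat\beta + (2k\pi,0)$ to $\beta$ and $F^j(\hat\beta)$ to $f^j(\beta)$; since $\beta = fr_U V_i \subset U$ and $U$ is $f$-invariant, every iterate $f^j(\beta)$ lies in $U$, hence $\xi \subset U$. For (2), item (2) of Lemma \ref{first estimate between f(z) and f'(0)z} shows that each application of $F$ scales the radial coordinate by a factor in $(1-\epsilon,1+\epsilon)$, so inductively every point of $F^j(\hat\beta)$ has $r$-coordinate in $\bigl(r_0(1-\epsilon)^j,\,r_0(1+\epsilon)^j\bigr)\subset(0,\delta)$ for $0\le j\le n$, thanks to choice (\ref{r_0}). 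Thus $\xi$ avoids the origin and lies in $B_\delta^\circ$.

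For (3a), the path $\hat\xi$ joins $(\theta_0,r_0)$ to $(\theta_0 + 2k\pi, r_0)$ with $k \ge 1$, so $\xi$ is a closed loop in $\mathbb{R}^2 \setminus \{0\}$ whose winding number around the origin equals $k \ne 0$. Consequently $0$ lies in a bounded component of $\mathbb{R}^2 \setminus \xi$, and any bounded component of the complement of a compact plane set is contained in its convex hull; since $B$ is a closed Euclidean ball and $\xi \subset B_\delta^\circ \subset B^\circ$, this convex hull lies in $B^\circ$. To transfer the conclusion to $S$, I would use that $S \setminus B^\circ$ is connected (removing an open disk from a compact connected surface leaves a connected set) and disjoint from $\xi$; hence $S \setminus B^\circ$ sits inside a single component of $S \setminus \xi$, which cannot be $B'$ because $0 \notin S \setminus B^\circ$. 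Therefore $B' \cap (S \setminus B^\circ) = \varnothing$, i.e.\ $B' \subset B^\circ$. Finally, (3b) is a standard consequence of $B'$ being an open component of $S \setminus \xi$ in a locally connected Hausdorff space: distinct components are disjoint open sets, so $fr_S B' = cl_S B' \setminus B' \subset \xi$.

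The only delicate step is the planar winding argument in (3a): since $\xi$ need not be a simple closed curve, one cannot appeal directly to the Jordan curve theorem. The key observation is that the unbounded component of the complement of any compact plane set has trivial winding around each of its points, which places $0$ in a bounded component, and a separating-line argument then places any bounded component inside the convex hull of the curve; from there the compact-surface step is routine.
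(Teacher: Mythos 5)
Your treatment of items (1), (2) and (3b) is correct and matches the paper's. For (3a) you take a genuinely different route, and the planar half of it is sound: the loop $\xi=P(\hat\xi)$ has winding number $k\geq 1$ about the origin, so $0$ lies in a bounded component $D_0$ of $\mathbb{R}^2\setminus\xi$, and bounded complementary components of a compact plane set lie in its convex hull, whence $D_0\subset \mathrm{conv}(\xi)\subset B_{\delta}^{\circ}\subset B^{\circ}$. The problem is the transfer to $S$. You let $W$ be the component of $S\setminus\xi$ containing the connected set $S\setminus B^{\circ}$ and assert that $W\neq B'$ ``because $0\notin S\setminus B^{\circ}$''. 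That is a non sequitur: $W$ contains $S\setminus B^{\circ}$ but could a priori contain more, including the point $0$ itself --- ruling out precisely this is the content of item (3a), so as written the step begs the question.

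The gap is short to close, but it requires an observation you did not make: $D_0$ is open in $S$ (it sits inside the chart $V$), its closure in $\mathbb{R}^2$ is compact and contained in $\mathrm{conv}(\xi)\subset B_{\delta}^{\circ}\subset V$, so its closure in $S$ coincides with its planar closure and $fr_S D_0\subset\xi$. Hence $D_0$ is both open and closed in $S\setminus\xi$, i.e.\ it is a full component of $S\setminus\xi$; since $0\in D_0$ this gives $D_0=B'$ and $B'\subset B^{\circ}$. (The connectedness of $S\setminus B^{\circ}$ then plays no role.) For comparison, the paper argues instead in the covering $P:\mathbb{R}\times(0,\infty)\rightarrow\mathbb{R}^2\setminus\{0\}$, trapping $P^{-1}(B')$ between $\mathbb{R}\times(0,\eta_1)$ and $\mathbb{R}\times(0,\eta_2)$ with $\eta_2<\delta$, the lifted curve and its integer translates separating the strip. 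Your winding-number and convex-hull argument is a legitimate, arguably more elementary, replacement for that separation argument --- but only once the clopen step above is supplied.
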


\begin{proof}
Since $\beta  \subset U$ and $\xi  \subset  \cup _{j =0}^{n}f^{j} \beta $ we have that $\xi \subset U$, which proves item $(1)$.

Item $(2)$ follows immediately from the construction of $\xi$.
Observe that $\xi$ is a non trivial element of $\pi_{1} (B_{\delta} \e \circ -\{0\})$. 

Let $B'$ be the component of $S \setminus \xi$ which contains $p$.

If $\eta_1 < \inf \{r>0 \mid (\theta,r) \in \hat{\xi} \text{ for some } \theta \in [\theta_0,\theta_0 + 2k \pi] \}$ then $P \e {-1} (B')$ contains $\mathbb{R} \times (0,\eta_1)$.

Similarly, if $\delta > \eta_2 > \sup \{r>0 \mid (\theta,r) \in \hat{\xi} \text{ for some } \theta \in [\theta_0,\theta_0 + 2k \pi] \}$ then $P \e {-1} (B')$ is contained $\mathbb{R} \times (0,\eta_2)$.

This implies that $B_{\eta_1} \e \circ \subset B' \subset B_{\eta_2} \e \circ$. 
Therefore $B'\subset B_{\delta} \e \circ \subset B \e \circ$, which proves item $(3a)$.
Item $(3b)$ is obvious.
\end{proof}

The contradiction is obtained in the following way.

Since $X(e)=\{p\} \subset Z (b)$ we have that $Z (b) \cap B'  \neq \varnothing $. 
On the other hand, since $B' \subset B$ and $Z(b)$ is not contained in $B$, we have that $Z(b) \cap (S-B') \neq \varnothing$. 

Being connected, $Z(b) \cap fr_{S}{B'} \neq \varnothing$ and therefore $fr_{S}{B'} \cap fr_S U \neq \varnothing$.

On the other hand, by Lemma \ref{contradiction}, we have $fr_{S}B' \subset \xi \subset U$.


\vspace{2mm}

\textbf{5) The proof when $F\hat{\beta} \cap (\hat{\beta} + (2\pi,0)) \neq \varnothing$.}
When $F \hat{\beta} \cap (\hat{\beta}+(2\pi,0)) \neq \varnothing$ we work with a different lifting of $f$, $H (\theta,r) =F(\theta,r)-(2\pi,0)$. 

Using equations (\ref{condition on epsilon 2}) and (\ref{estimate fˆj(p) and F'ˆj(0)})
it is not difficult to show that $H$ satisfies $H \hat{\beta} \cap \hat{\beta} \neq \varnothing $ and $H \e n \hat{\beta} \cap (\hat{\beta} +(2k\pi,0)) \neq \varnothing $ for some $k \leq -1$. 

The construction of $\xi $ is done as in the previous case and we arrive at a contradiction in the same way.

This proves item $(1)$ of Theorem \ref{The frontier of domains with fixed prime ends}.



\subsection{The covering \texorpdfstring{$\phi:C(b)\rightarrow Z(b)$}{phi:C(b) rightarrow Z(b)}}

In this subsection we will prove items $(2)$, $(3)$ and $(4)$ Theorem \ref{The frontier of domains with fixed prime ends}. 

Before we begin, it is important to mention that we learned the idea of the cone in Lemma \ref{local semiconjugacy} from John Franks and Patrice Le Calvez in \cite{FrLe}, where they clarified Mather's original idea given in \cite{Mather1981}.

\subsubsection{The accumulation lemma}

Now we will present a very useful result in conservative dynamics on surfaces. 
For a proof see Corollary 8.3 of \cite{Mather1981}.

\begin{Proposition} \label{the accumulation lemma}
\textsc{The Accumulation Lemma}. 
    Let $S$ be a connected surface, $f:S \rightarrow S$ an area preserving homeomorphism of $S$ and $K$ a compact connected invariant set of $f$. If $L$ is a branch of $f$ and $L \cap K \neq  \varnothing $ then $L \subset K$.
\end{Proposition}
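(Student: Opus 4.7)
The plan is as follows. I would first establish that the saddle point $p$ must lie in $K$. Without loss of generality assume $L$ is an unstable branch of a saddle $p$; the stable case follows by replacing $f$ by $f^{-1}$. Pick $y \in L \cap K$; invariance of $K$ gives $f^{-n}(y) \in K$ for all $n \geq 0$, and since $L$ is an unstable branch, $f^{-n}(y) \to p$. Because $K$ is closed, $p \in K$.

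Next I would parametrize $L \cup \{p\}$ by an injective continuous map $\gamma:[0,\infty)\to S$ with $\gamma(0)=p$ and $\gamma((0,\infty))=L$, chosen so that $f(\gamma(t))=\gamma(\lambda t)$ for a constant $\lambda>1$. Such a parametrization can be built by fixing $\gamma$ on a fundamental domain in parameter space coming from the linearization of $f$ near $p$, and extending along the branch using the $f$-action. Setting $A:=\gamma^{-1}(K)\cap (0,\infty)$, the statement $L\subset K$ is equivalent to $A=(0,\infty)$. We already know that $A$ is nonempty and closed in $(0,\infty)$, invariant under $t\mapsto \lambda t$, and that $0$ is a limit point of $A$ in $[0,\infty)$ (since the backward orbit $\gamma(\lambda^{-n}t_y)$ of any point $y=\gamma(t_y)\in L\cap K$ remains in $K$).

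The heart of the argument, and what I expect to be the main obstacle, is to exclude any gap in $A$ using area preservation. Suppose for contradiction that a maximal open gap $(a,b)\subset(0,\infty)\setminus A$ exists, so that $\beta:=\gamma((a,b))$ is an open arc of $L$ lying in $S\setminus K$ whose endpoints (or the limit $p$ in the case $a=0$) lie in $K$. Its backward iterates $f^{-n}(\beta)$ are pairwise disjoint subarcs of $L$ shrinking to $p$, each with endpoints in $K$ and interior in $S\setminus K$. Taking a small open topological disk $D\subset S\setminus K$ around an interior point of $\beta$, the iterates $f^{-n}(D)$ are open sets of equal positive area, all disjoint from $K$, and containing points arbitrarily close to $p$. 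In linearizing coordinates near $p$ these sets are compressed in the unstable direction and stretched in the stable direction, and combining this hyperbolic behaviour with the connectedness of $K$ and the area preservation of $f$ produces a contradiction.

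The technically delicate point is the final measure-theoretic/topological step, which translates the qualitative picture of disjoint arcs accumulating at $p$ into a precise area contradiction; this is exactly Corollary 8.3 of \cite{Mather1981}, whose line of argument I would follow in detail. Once $A=(0,\infty)$ is established, $L\subset K$ follows immediately.
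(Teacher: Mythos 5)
The paper does not actually prove this proposition: it states it and refers to Corollary 8.3 of \cite{Mather1981} for the proof. Your proposal ultimately does the same thing, since at the decisive moment you write that the contradiction ``is exactly Corollary 8.3 of \cite{Mather1981}, whose line of argument I would follow in detail.'' So as a citation your proposal lands in the same place as the paper; but judged as a blind proof, the heart of the argument is missing, and the sketch you offer for it would not work as stated. Your scaffolding up to that point is fine: $p\in K$ by closedness of $K$ and invariance; the set $A=\gamma^{-1}(K)\cap(0,\infty)$ is closed, nonempty and $\lambda$-invariant; a maximal gap $(a,b)$ cannot have $a=0$ (else $b\in A$ would force $b\lambda^{-1}\in A\cap(0,b)$), so $0$ is indeed a limit point of $A$ and the iterated gaps $(a\lambda^{-n},b\lambda^{-n})$ are pairwise disjoint components of the complement.

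The gap is the final step. First, the open sets $f^{-n}(D)$ need not be pairwise disjoint (only the arcs $f^{-n}(\beta)$ are), so ``infinitely many sets of equal positive area'' is not available without further work. Second, even granting disjointness, there is no contradiction: near $p$ the sets $f^{-n}(D)$ are compressed in the unstable direction, so their intersections with any fixed compact neighborhood of $p$ have area tending to $0$, while $S$ is only a connected surface carrying a measure finite on compact sets, so an infinite disjoint family of equal-area open sets is perfectly possible (they can drift off along the stable direction or to infinity). The actual proof has a different character: one must use the \emph{connectedness} of $K$ in an essential topological way (e.g., showing that a component of $S\setminus K$ meeting a gap of $L$ must contain a whole sector at $p$, and then running a recurrence/area argument inside that sector, or invoking the prime-end machinery as Mather does). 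Your sketch invokes connectedness of $K$ only in passing and never uses it; as written, the ``precise area contradiction'' you appeal to does not exist, which is precisely why you had to defer to Mather at that point.
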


Next, we would like to prove a corollary of the Accumulation Lemma that will be needed later.

\begin{Corollary}\label{corollary of the accumulation lemma}
 Let $f:S \rightarrow S$ be an area preserving $C \e 1$ diffeomorphism of a compact connected surface $S$ and $L$ be a branch of a saddle fixed point $p$.
Then we have the following:

\begin{enumerate}
    \item Let $U$ be an invariant domain such that $fr_S U$ has finitely many components.
    If $L \cap U \neq  \varnothing $ then $U$ contains $L$ and the two sectors of $p$ adjacent to $L$.
    \item If $K$ is a compact connected invariant set then either $L \subset K$ or $L$ and its adjacent sectors are contained in one component of $S -K$.
\end{enumerate}
\end{Corollary}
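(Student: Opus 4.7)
The plan is to treat items (1) and (2) in parallel, factoring out a common \emph{sector lemma} that handles the sector assertions of both. The Accumulation Lemma (Proposition \ref{the accumulation lemma}) immediately disposes of any case where $L$ meets a compact connected invariant set, so the real content lies in showing that whenever $L$ sits inside an open invariant region, the two adjacent sectors must sit there too. I would organize the write-up in three steps: first the inclusion $L \subset U$ in item (1), then the sector lemma, and finally a connectedness remark to close item (2).

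For $L \subset U$, the set $cl_S U$ is compact, connected, and $f$-invariant, so Proposition \ref{the accumulation lemma} gives $L \subset cl_S U$. Writing $fr_S U = C_1 \cup \cdots \cup C_k$ with each $C_i$ compact and connected, the $C_i$ are permuted by $f$; some power $f^m$ will fix each $C_i$, and after doubling $m$ if $df_p$ has negative eigenvalues I may arrange $f^m(L) = L$ as well. Applying Proposition \ref{the accumulation lemma} to $f^m$ and each $C_i$, if $L \cap C_i \neq \varnothing$ then $L \subset C_i$, contradicting $L \cap U \neq \varnothing$; hence $L \cap fr_S U = \varnothing$ and $L \subset U$.

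For the sector claim I would fix a linearizing chart $\phi : V \to (-\delta, \delta)^2$ at $p$ in which $f$ reads $(x,y) \mapsto (\lambda x, \lambda^{-1} y)$ with $\lambda > 1$, arranging (by passing to $f^2$ or $f^{-1}$ if necessary) that $L$ corresponds to the positive $x$-axis, and let $\Sigma$ be the first-quadrant sector. Given a closed $f$-invariant set $T \subset S$ with $T \cap L = \varnothing$, the claim is that some representative $\Sigma(V')$ is disjoint from $T$. If not, shrinking $V$ produces $y_n \in \Sigma \cap T$ with $\phi(y_n) = (a_n, b_n)$ and $a_n, b_n \to 0^+$. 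Picking $N_n$ so that $\lambda^{N_n} a_n \in [\delta/(2\lambda), \delta/2]$ keeps all iterates $f^k(y_n)$ for $0 \leq k \leq N_n$ inside $V$, and $\phi(f^{N_n}(y_n)) = (\lambda^{N_n} a_n, \lambda^{-N_n} b_n)$ with $\lambda^{-N_n} b_n \leq 2\lambda a_n b_n / \delta \to 0$. A subsequence converges to some $q^* = \phi^{-1}(x^*, 0) \in L \cap V$, and closedness of $T$ forces $q^* \in T$, contradicting $T \cap L = \varnothing$. Item (1) then follows by taking $T = S \setminus U$ (closed and $f$-invariant, disjoint from $L$ by the previous step), and item (2), in the remaining case $L \cap K = \varnothing$, by taking $T = K$; for (2), $L$ is connected as the image of an interval under an injective immersion and thus lies in a single component $C$ of $S \setminus K$, while $L \cup \Sigma(V')$ is connected because $L$ meets $cl_S \Sigma(V')$ along the local branch, so $\Sigma(V') \subset C$ as well.

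The hard part will be the sector lemma: one has to choose $N_n$ so that the iterate lands on a compact sub-arc of the local branch kept strictly inside $V$ (not on $\partial V$), check that the orbit remains in the chart for the entire range $0 \leq k \leq N_n$, and cover the eigenvalue sign (by passing to $f^2$) and the stable-versus-unstable distinction (by passing to $f^{-1}$) by symmetry.
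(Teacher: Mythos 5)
Your proof is correct, and the first half coincides with the paper's: since $L$ meets $U$, if it also met $fr_S U$ it would meet some component $C_i$, which is compact, connected and periodic, and the Accumulation Lemma applied to a suitable power of $f$ would give $L\subset C_i$, a contradiction; hence $L\subset U$. Where you genuinely diverge is the sector step. The paper's argument is a short saturation argument: take $x\in L$ and a neighborhood $W\subset U$ of the arc $L[x,f^{2}(x)]$; then $\bigcup_{n\in\mathbb{Z}}f^{2n}(W)\subset U$ by invariance, and in the linearizing chart this union covers a region of the form $\{u>0,\ |uv|<c\}$, hence a representative of each sector adjacent to $L$. You prove the dual statement for the closed invariant complement $T=S\setminus U$ (resp.\ $T=K$): if every representative sector met $T$, forward iteration would push points of $\Sigma\cap T$ onto a compact sub-arc of the local branch, and closedness plus invariance of $T$ would place a point of $L$ in $T$. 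Both rest on the same local hyperbolic picture; the paper's version is shorter because it iterates an open neighborhood rather than tracking individual orbits, while yours has the advantage of handling items (1) and (2) uniformly through one lemma about closed invariant sets disjoint from $L$ — the paper merely asserts that (2) follows from (1), which in fact needs the additional observations that a residual domain of the connected compact set $K$ has frontier with finitely many components (via Proposition \ref{g+1 ideal boundary points with boundary}) and that one may have to pass to $f^{2}$ to make that domain invariant. The price you pay is exactly the bookkeeping you identify: choosing $N_n$ so that $f^{N_n}(y_n)$ lands in a fixed compact annulus of the local branch and verifying that the whole orbit segment stays in the chart; your estimates for that are correct.
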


\begin{proof}

Assume by contradiction that $L \not\subset U$.
Then $L$ intersects a component $K$ of $fr_{S}U$, which is a periodic set. 
By the accumulation Lemma applied to a power of $f$ and $K$, we have $L \subset K$, a contradiction. 
Therefore $L \subset U$.
 
Let $x \in L$ and $W \subset U$  be a neighborhood in $U$ of the arc from $x$ to $f \e 2 (x)$ in $L$. 
Then $ \cup _{n \in \mathbb{Z}}f \e {2n} (W)$ contains the sectors of $p$ adjacent to $L$. 

This proves item $(1)$.
$(2)$ follows from $(1)$.  
\end{proof}



\subsubsection{The local semiconjugacy around a fixed prime end}

Recall our assumptions.
$S$ is a compact connected orientable surface and $f:S \rightarrow S$ is an area preserving orientation preserving $C \e 1$ diffeomorphism of $S$.
$U$ is an invariant domain of $S$ such that $fr_S{U}$ has a finite number of connected components. 
$b$ is a regular ideal boundary point of $U$ such that $f_*(b)=b$, $\hat{f}:C(b) \rightarrow C(b)$ is the homeomorphism on the related circle of prime ends, and all fixed points of $f$ in $Z(b)$ are non degenerate. 
We assume that there exists a fixed prime end $e \in C(b)$. 
In the previous subsection, we saw that if $e \in C(b)$ is fixed prime end then $e$ is accessible and its principal point $p$ is an isolated fixed point of $Z(b)$ of saddle type.

We start by taking continuous coordinates $(x,y)$ in a neighborhood $V$ of $p$ with $p$ at the origin, where $f(x,y) =(\lambda x ,\lambda \e {-1} y)$ with $|\lambda| > 1$.
We may assume that $V$ is the open ball $B_1 \e \circ$ of radius $1$ and center at $(0,0)$. 

There exists a path $\beta:(0,1] \rightarrow U$ such that $\underset{t \rightarrow 0}{\lim }\beta (t) = p$ in $S$ and $\underset{t \rightarrow 0}{\lim }\beta (t) = e$ in $E(U)$.
We may assume that $\beta(0,1) \subset V$ and $\beta(1) \notin V$. 

From item Lemma \ref{preparation lemma}, there exists $\delta \in (0,1)$ such that for any decreasing sequence $(r_n)_{n \geq 1}$ contained in $(0,\delta )$ with $\underset{n \rightarrow \infty }{\lim }r_n=0$, there exists a chain $(V_n)$ representing $e$ such that $\xi_n = fr_{U}V_n \subset C_{r_n}$.

Since $\underset{t \rightarrow 0}{\lim }\beta (t) = e$ in $E(U)$ we may assume that $\xi_n \cap \beta \neq \varnothing$ for every $n$, and therefore $\beta(0,1) \cup (\cup_n \xi_n)$ is connected and contained in both $U$ and $V$.
Therefore $\beta$ and all the arcs $\xi_n$ must be contained in one connected component of $U \cap V$.

Denote this component by $W$.

By Corollary \ref{corollary of the accumulation lemma} a branch of $p$ is either contained in $U$ or is disjoint from it.

Recall that our hypotheses do not allow taking powers of $f$.

\begin{Lemma}
    We have that $\lambda > 1$.
\end{Lemma}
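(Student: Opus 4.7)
The plan is to argue by contradiction from $\lambda<-1$. In the linearizing coordinates $(x,y)$ on $V$, the map $f(x,y)=(\lambda x,\lambda^{-1}y)$ then negates both coordinates: it swaps the two unstable branches of $p$ (the $\pm x$-half-axes), the two stable branches ($\pm y$-half-axes), and the opposite sectors $\Sigma_1\leftrightarrow\Sigma_3$, $\Sigma_2\leftrightarrow\Sigma_4$. Since $e$ is a fixed prime end and the chain $(V_n)$ represents $e$, so does $(f(V_n))$, and Lemma \ref{frontiers of V_i and fV_i intersect for i>=i_0} applied to $f$ yields $\xi_n\cap f(\xi_n)\neq\varnothing$ for all sufficiently large $n$. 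The strategy is to show this is impossible when $\lambda<0$.

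By Corollary \ref{corollary of the accumulation lemma}, each branch of $p$ is either entirely contained in $U$ or disjoint from $U$, and the invariance of $U$ makes the set $B_U$ of branches in $U$ invariant under the involution above. Since no branch is fixed by this involution, $|B_U|\in\{0,2,4\}$. The case $|B_U|=4$ is ruled out by regularity of $b$: then Corollary \ref{corollary of the accumulation lemma} puts the four adjacent sectors in $U$ as well, so $V\setminus\{p\}\subset U$ for $V$ small enough; consequently $\{p\}$ is a connected component of $fr_SU$, and since $Z(b)\subset fr_SU$ is connected and contains $p$, we would have $Z(b)=\{p\}$, contradicting regularity. Thus $|B_U|\in\{0,2\}$, and in the case $|B_U|=2$ the two branches in $B_U$ are either both unstable or both stable.

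Shrink $V$ so that $f(C_{r_n})\subset V$ for all $n$. The arc $\xi_n\subset C_{r_n}\cap U$ can meet a coordinate axis only at a point of a branch of $p$ contained in $U$, by Corollary \ref{corollary of the accumulation lemma} again; hence $\xi_n$ cannot cross any axis whose branches are outside $B_U$. It follows that $\xi_n$ is confined to an open sector if $B_U=\varnothing$, to a half-disc of $V$ bounded by the $y$-axis if $B_U$ is the unstable pair, and to a half-disc bounded by the $x$-axis if $B_U$ is the stable pair. In every case, $\xi_n$ lies in a half-plane $H$ of the form $\{\varepsilon x\geq 0\}$ or $\{\varepsilon y\geq 0\}$ with $\varepsilon\in\{\pm 1\}$, and by construction the bounding axis of $H$ is disjoint from $U$. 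Since $f$ negates both coordinates, $f(H)$ is the opposite half-plane, so $\xi_n\cap f(\xi_n)\subset H\cap f(H)$ lies in the bounding axis of $H$; but that axis is disjoint from $U\supset\xi_n$, forcing $\xi_n\cap f(\xi_n)=\varnothing$. This contradicts Lemma \ref{frontiers of V_i and fV_i intersect for i>=i_0} and completes the proof that $\lambda>0$, hence $\lambda>1$. The step requiring the most care is the exclusion of $|B_U|=4$, which is precisely where the regularity of $b$ is used.
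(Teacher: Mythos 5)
Your proof is correct, and it extracts the contradiction by a different mechanism than the paper's. The two arguments share the same geometric core: when $\lambda<-1$ the linearized map acts as a point reflection, swapping opposite local branches and opposite sectors, and the case where all four branches lie in $U$ is excluded by regularity of $b$ via Corollary \ref{corollary of the accumulation lemma} (the paper phrases this for the component $W$ of $U\cap V$, you for $U$ itself). From there the paper takes a single local branch not in $U$, notes that by invariance the entire local invariant manifold through it is disjoint from $U$, and observes that the accessing path $\beta$ and $f\circ\beta$ then lie on opposite sides of that manifold, so they converge to distinct prime ends and $\hat f(e)\neq e$, contradicting that $e$ is fixed. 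You instead classify the set $B_U$ of branches contained in $U$ ($|B_U|\in\{0,2\}$ after excluding $4$), show in every case that the connected cross-cuts $\xi_n$ lie strictly on one side of a coordinate axis disjoint from $U$ while $f(\xi_n)$ lies strictly on the other, and contradict Lemma \ref{frontiers of V_i and fV_i intersect for i>=i_0}. Your case analysis is somewhat redundant --- all you need is that at least one full axis is disjoint from $U$, which follows in one line from invariance once a single local branch is excluded --- but your route has the advantage of resting only on lemmas already proved: the contradiction $\xi_n\cap f(\xi_n)=\varnothing$ is immediate, whereas the paper's step ``$\beta\subset B_+$ and $f\circ\beta\subset B_-$ imply $\hat f(e)\neq e$'' quietly uses that accessing paths eventually confined to different components of $U\cap V$ determine different prime ends.
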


\begin{proof}
$W$ can not contain the four local branches, otherwise $Z(b)=\{p\}$ by Corollary \ref{corollary of the accumulation lemma}. 
This contradicts that $b$ is regular.
From this we conclude that one of the local branches is not contained in $W$ and therefore this branch is disjoint from $W$ and from $U$.

Assume by contradiction that $\lambda < {-1}$.
In this case all local branches have period two.

If $U$ is disjoint from, say, $L_1=\{(x,y) \in V \mid x>0 ,y=0\}$ then it is disjoint from the local invariant manifold, $\{(x,y) \in V \mid -1<x<1 ,y=0\}$. 
This local invariant manifold separates $V$ into two "half open" balls, $B_+$ and $B_-$.

If $\beta$ is contained in $B_+$ then $f \circ \beta$ is contained in $B_-$. As $t \rightarrow 0$ both paths tend to $p$ in $S$, but $\beta$ tends to $e$ in $E(U)$ and $f \circ \beta$ tends to $\hat{f} (e)$.

But since $\beta$ is contained in $B_+$ and $f \circ \beta$ is contained in $B_-$, we have that $\hat{f}(e) \neq e$ and $\hat{f} \e 2 (e) = e$, contradicting the fact that $e$ is a fixed prime end.
\end{proof}

\begin{Lemma} \label{local semiconjugacy}
For each fixed prime end $e \in C(b)$ there exists an arc of prime ends $(a_1,a_2)$ around $e$ in $C(b)$ such that $\phi_e:(a_1,a_2) \rightarrow Z(b)$ defined by $\phi_e(a)=Y(a)$ is a function that satisfies the following:
\begin{enumerate}
    \item $Y(a)$ is a one point set for every $a\in (a_1,a_2)$.
    \item $\phi_e(e)=p$ and $\phi_e((a_1,e))$ and $\phi_e((e,a_2))$ are local branches of $p$.
    \item $\big( \phi_e \circ \hat{f} \big) (a) = \big( f \circ \phi_e \big)(a)$ if $a$ and $\hat{f}(a)$ belong to $(a_1,a_2)$.
\end{enumerate}
\end{Lemma}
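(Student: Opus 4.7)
In the linearising coordinates on $V=B_1^\circ$ the map is $f(x,y)=(\lambda x,\lambda^{-1}y)$ with $\lambda>1$, so the local unstable branches of $p$ are the two halves of the $x$-axis and the local stable branches the two halves of the $y$-axis. By Corollary~\ref{corollary of the accumulation lemma}, if a local branch of $p$ meets $W$ then $W$ contains that branch together with both adjacent sectors; since not all four local branches can sit in $W$ (otherwise $Z(b)=\{p\}$ by the same corollary, contradicting regularity of $b$), exactly two local branches $L^-,L^+$ of $p$ bound $W$ near $p$, and inside a small disk about $p$ the set $W$ agrees with the union of those open sectors lying on the $W$-side of $L^-\cup\{p\}\cup L^+$.

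Following the idea of Franks and Le Calvez, I construct a wedge-shaped region $T\subset W$ with $p$ in its closure, bounded in the small disk by $L^-$, $L^+$, and a short crosscut $\gamma\subset W$ with endpoints on $L^-$ and $L^+$. The crosscut is chosen close to $p$ and transverse to the unstable direction in the chart; the hyperbolic expansion and contraction then push $f(\gamma)$ into the interior of $T$ close to the local unstable arc, so the forward iterates $f^n(T)\cap T$ form a nested family whose transverse dimension shrinks like $\lambda^{-n}$, collapsing onto the local unstable arc contained in $T$. Now let $A\subset C(b)$ be the set of prime ends admitting a chain eventually in $T$; since $\beta$ enters $T$ for small parameter and converges to $e$ in $E(U)$, we have $e\in A$. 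The set $A$ is an open arc of $C(b)$ bounded by two prime ends $a_1,a_2$ corresponding to sliding the "mouth" of $T$ along $L^-$ and $L^+$ toward the endpoints of $\gamma$. For $a\in(a_1,a_2)$, a chain representing $a$ can be taken inside $T$; the transverse shrinking estimate forces $Y(a)$ to be a single point of $L^-\cup\{p\}\cup L^+$, equal to $p$ exactly at $a=e$ and otherwise lying on $L^-$ for $a\in(a_1,e)$ and on $L^+$ for $a\in(e,a_2)$. Letting the endpoint of $\gamma$ on each side vary recovers the full local branches. This proves items $(1)$ and $(2)$; item $(3)$ is immediate from the $f$-equivariance of impressions, since chains representing $\hat f(a)$ are $f$-images of chains representing $a$, giving $\phi_e(\hat f(a))=Y(\hat f(a))=f(Y(a))=f(\phi_e(a))$ whenever $a$ and $\hat f(a)$ both lie in $(a_1,a_2)$.

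The delicate step is the construction of $\gamma$ together with the geometric shrinking estimate: one must choose $\gamma$ inside $W$ transverse to the unstable direction of the linearising chart, verify that forward iterates of $T$ really do collapse onto a local invariant arc in the Hausdorff sense, and then transfer this contraction from compact subsets of $W$ down to the impressions of prime-end chains contained in $T$. Because the linearisation is only continuous, the argument cannot invoke differentiability but must work directly with the explicit geometry of the chart; this is the heart of the Franks and Le Calvez cone construction, and once $T$ is in place the rest of the proof is essentially bookkeeping.
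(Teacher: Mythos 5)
Your plan follows the same Franks--Le Calvez cone idea as the paper, and your treatment of items (2) and (3) (equivariance of impressions) matches the paper's, but there is a genuine gap at the crucial step: you never prove that $U$ contains a full two-dimensional piece of the sector at $p$. You assert that ``inside a small disk about $p$ the set $W$ agrees with the union of those open sectors lying on the $W$-side of $L^-\cup\{p\}\cup L^+$'' and attribute this to Corollary \ref{corollary of the accumulation lemma}; but that corollary only yields containment of the adjacent sectors when $W$ \emph{meets a branch}. In the case where $W$ meets no branch of $p$, $W$ is merely some connected open subset of a single sector --- a priori possibly a thin strip spiralling toward $p$ --- and nothing you cite shows it contains the whole sector near $p$. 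This is precisely the hardest part of the lemma. Consequently your wedge $T$ is not known to lie in $W$ (or even in $U$), and the prime ends ``admitting a chain eventually in $T$'' are not controlled. Your fallback --- a single crosscut $\gamma$ whose forward iterates collapse onto the local unstable arc --- cannot close this gap: $\bigcup_n f^n(\gamma)$ is a countable union of arcs and cannot fill a two-dimensional wedge, so it does not establish $T\subset U$.

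The paper's proof supplies exactly the missing ingredient by producing a \emph{continuum} of crosscuts. Lemma \ref{preparation lemma} gives, for every radius $\rho$ in an interval $(0,r_{n_0})$, a chain frontier $\xi_m$ contained in the sup-norm square boundary $\Gamma_\rho$; the intersection property $\xi_m\cap f(\xi_m)\neq\varnothing$ from Lemma \ref{frontiers of V_i and fV_i intersect for i>=i_0}, combined with the explicit computation $f(\Gamma_\rho)\cap\Gamma_\rho=\{(\rho,\lambda^{-1}\rho)\}$, forces $\xi_m$ to contain the entire corner arc $\Lambda_\rho$ from $(\rho,\lambda^{-1}\rho)$ to $(\lambda^{-1}\rho,\rho)$. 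The union $C=\bigcup_{\rho}\Lambda_\rho$ is then a genuine two-dimensional cone contained in $U$, and $\bigcup_{n}f^n(C)$ contains the full open square $R=(0,r_{n_0})\times(0,r_{n_0})$. Once $R\subset W$ is known, item (1) needs no dynamical shrinking at all: the chains $B^{\circ}_{1/n}(q)\cap R$ visibly have impression $\{q\}$ for each $q$ on the bounding local branches, and these prime ends fill out the arc $(a_1,a_2)$. You would need to supply the $\Lambda_\rho$ argument (or an equivalent) before your wedge construction becomes legitimate; as written, the proposal assumes the conclusion of the hard step.
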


\begin{proof}

Let us first consider the case in which $W$ does not intersect any of the branches of $p$.
In this case $W$ must be contained in one of the four sectors of $p$ defined by $V$, say $S_1=\{(x,y) \in V \hspace{0.5mm}\vert\hspace{0.5mm} x>0, y>0\}$.
The local branches adjacent to $S_1$ are $L_1$ and $L_2=\{(x,y) \in V \mid x=0 ,y>0\}$.

From Lemma \ref{frontiers of V_i and fV_i intersect for i>=i_0}, there exists $n_0$ such that $\xi_n \cap f(\xi_n) \neq \varnothing $ for $n \geq n_0$.

Lemma \ref{preparation lemma}  let us choose any norm in $\mathbb{R} \e 2$ to work with.
We are going to consider the sup norm, for which a ball $B_{\rho }$ is the square with vertices at $(\pm \rho,\pm \rho)$. 

Since $(r_n)$ is an arbitrary sequence contained in $(0,\delta)$, given any number $\rho \in (0,r_{n_0})$, we may assume that the sequence $(r_n)$ provided by Lemma \ref{preparation lemma} satisfies $\rho = r_m$ for some $m > n_0$.

Let $\Gamma_{\rho }=\big((0,\rho] \times \{\rho\}) \cup (\{\rho\} \times (0,\rho]\big)$. 
Then $\xi_m \subset \Gamma_{\rho}$, and since $f(\Gamma_{\rho}) \cap \Gamma_{\rho}=\{(\rho,\lambda \e {-1} \rho)\}$ we have that $f(\xi_m) \cap \xi_m =\{(\rho,\lambda \e {-1} \rho)\}$ as well. 

Therefore $(\rho,\lambda \e {-1} \rho)$ and $(\lambda \e {-1} \rho,\rho)$ belong to $\xi_m$, and the arc $\Lambda_{\rho}$ from $(\rho,\lambda \e {-1} \rho)$ to $(\lambda \e {-1} \rho,\rho)$ inside $\Gamma_{\rho }$ is contained in $\xi_m \subset U$.
This holds for every $\rho \in (0,r_{n_0})$.

The set $C:=\underset{\rho \in (0,r_{n_0})}{\bigcup} \Lambda_{\rho}$ is the intersection of $B \e \circ_{r_{n_0}}$ with the closed cone with vertex at $(0,0)$ and boundary at the half-lines from $(0,0)$ and slopes $\lambda \e {-1}$ and $\lambda$.

We have that $C \subset U$, and therefore $R:= (0,r_{n_0}) \times (0,r_{n_0}) \subset \underset{n \in \mathbb{Z}}{\bigcup} f \e n C \subset U$.

Since $R \subset V$ and $R$ intersects the arcs $\xi_m$, we conclude that $R \subset W$.

For any $q \in \Big(\big([0,r_{n_0}) \times \{0\}\big) \cup \big(\{0\} \times [0,r_{n_0})\big)\Big)$, if $B_{\frac{1}{n}}\e \circ (q)$ is the open ball of radius $\frac{1}{n}$ with center at $q$, then $V_n = B_{\frac{1}{n}}\e \circ (q) \cap R$ defines a prime chain $(V_n)=a$ such that $Y(a)=\{q\}$.

Let $(a_1,a_2)$ be the arc of prime ends whose impressions are the points of $\big([0,r_{n_0}) \times \{0\}\big) \cup \big(\{0\} \times [0,r_{n_0})\big)$. 
We define $\phi_e(a)=Y(a)$ for $a \in (a_1,a_2)$.

Items $(1)$ and $(2)$ are easy to check.

Item $(3)$ follows from the general fact that for any $e' \in E(U)$ we have the equality of sets $f\big(Y(e')\big)=Y\big(\hat{f}(e')\big)$.

This completes the construction of the local semiconjugacy when $W$ does not intersect any branch of $p$.

We will need item $(1)$ of Corollary \ref{corollary of the accumulation lemma}, which says that if $L$ is a branch of $p$ and $L \cap U \neq \varnothing$ then $U$ contains $L$ and the two sectors of $p$ adjacent to $L$.

Now we consider the case where $W$ intersects one branch of $p$, say the branch that contains the local branch $L_2$, and $W$ does not intersect the branches that contain $L_1$ and $L_3=\{(x,y) \in V \hspace{0.5mm}\vert\hspace{0.5mm} x<0 ,y=0\}$.

From Corollary  \ref{corollary of the accumulation lemma}, we have that $W=S_1 \cup L_2 \cup S_2$, where $S_2=\{(x,y) \in V \hspace{0.5mm}\vert\hspace{0.5mm} x<0, y>0\}$.
It follows that, any $q \in \{(x,y) \in V \hspace{0.5mm}\vert\hspace{0.5mm} -1<x<1, y=0\}$ is the impression of a prime end defined by chains made of "half" open balls centered at $q$ and contained in $W=S_1 \cup L_2 \cup S_2$.
The semiconjugacy is constructed as in the previous case.

Now it should be clear that there are four possibilities for $W$: $W$ consists one sector; $W$ consists of a local branch and its two adjacent sectors; $W$ consists of three sectors and the two local branches between them; $W$ consists of the complement in $V$ of the union of $p$ with one local branch.

As we said before, $W$ can not contain the four local branches.

We described the construction of the local semiconjugacy in the first two cases. In the other two, the construction is analogous to the second case.
\end{proof}

\subsubsection{The construction of the semi conjugacy}

The existence of the local semiconjugacy $\phi_e:(a_1,a_2) \rightarrow Z(b)$ around every fixed prime end $e$ of $C(b)$ implies that they are isolated and exist in a finite number.

Since $\phi_e (e) = p$ and each branch of $p$ is invariant, the existence of the local semiconjugacy also implies that $(a_1,e)$ and $(e,a_2)$ are mapped by $\hat{f}$ onto arcs that satisfy $\hat{f}(a_1,e) \cap (a_1,e) \neq \varnothing$ and $\hat{f}(e,a_2) \cap (e,a_2) \neq \varnothing$.

From this we conclude that $\hat{f}:C(b) \rightarrow C(b)$ is orientation preserving and has a finite number of fixed points. 
This proves item $(2)$ of Theorem \ref{The frontier of domains with fixed prime ends}.

Let $[e_1,e_2]$ be an arc of prime ends, where the end points are fixed, and if $e \in (e_1,e_2)$ then $\underset{n \rightarrow \infty}{lim} \hat{f} \e n (e) =e_2$ and $\underset{n \rightarrow -\infty}{lim} \hat{f} \e n (e) =e_1$.
For $i=1,2$ let $(a_{i1},a_{i2})$ be the arc around $e_i$ where the local semiconjugacy $\phi_{e_i}:(a_{i1},a_{i2}) \rightarrow Z(b)$, $\phi_{e_i}(a)=Y(a)$, is defined.

Let $a \in (e_1,e_2)$.
There exist $n_1$ and $n_2$ such that $f \e {n_i} (a) \in (a_{i1},a_{i2})$.
We have that $f \e n {\big(Y(e')\big)} = Y\big((\hat{f}) \e n (e')\big)$ for any $e'\in E(U)$ and $n \in \mathbb{Z}$.
Since $Y\big(\hat{f} \e {n_i} (a)\big)$ is a one point set, the same happens to $Y(a)$.

Therefore $Y(a)$ is a one point set for every $a \in C(b)$ and $\phi:C(b) \rightarrow Z(b)$, $\phi(a)=Y(a)$, is well defined.

Obviously $f \circ \phi = \phi \circ \hat{f}$ on $C(b)$.

Each arc $(a_{i1},a_{i2})$ is mapped into the union of $p_i=Y(e_i)$ and two of its local branches.
Since $f \e {n_i} (a) \in (a_{i1},a_{i2})$ for $i=1,2$, we have that $Y(a)$ belongs to a branch of each $p_i$. 

From this we conclude that the arc $(e_1,e_2)$ is mapped onto a connection from $p_1$ to $p_2$.

Now we show that $\phi$ is continuous.
Let $a \in C(b)$.

Consider a neighborhood $A \cap Z(b)$ of $\phi(a)=Y(a)$ in $Z(b)$, where $A$ is an open subset of $S$.
If $a$ is represented by a prime chain $(V_n)$ then $Y(a)=\cap_n cl_S{V_n}$, and therefore $cl_S {V_n} \subset A$ for some $n$.
We have that $V'_n$ is a neighborhood of $a$ in $E(U)$.
If $e \in C(b) \cap V'_n$, then $e=(W_k)$ with $W_k \subset V_n$ for some $k$.

It follows that $Y(e) = \cap_k cl_S{W_k} \in A \cap Z(b)$, and $\phi$ is continuous at $a$.

It remains to show that $\phi:C(b) \rightarrow Z(b)$ is surjective. 

This follows from the next result that is probably known to experts, but which, as far as we know, there is no written proof anywhere. 

We would like to remark that Proposition \ref{impression union of impressions} is a general result about any circle of prime ends related to a regular ideal boundary point. There are no extra hipotheses or homeomorphisms involved.

\begin{Proposition} \label{impression union of impressions}
    Let $S$ be a compact connected surface and $U \subset S$ a domain such that $fr_S U$ has finite number of connected components. If $b$ is a regular ideal boundary point of $U$ then $Z(b)= \cup_{e \in C(b)} Y(e)$.
\end{Proposition}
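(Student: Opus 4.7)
The plan is to prove the non-trivial inclusion $Z(b) \subset \bigcup_{e \in C(b)} Y(e)$ by exploiting compactness of the prime end compactification $E(U)$: from a sequence witnessing $x \in Z(b)$ in $S$ and $b$ in $B(U)$, extract a subsequential limit in $E(U)$ and argue that this limit is a prime end in $C(b)$ whose impression contains $x$. The reverse inclusion is recorded in the summary of Mather's theory preceding the proposition, where it is noted that $Y(e) \subset Z(\alpha(e))$ for every prime point $e$; since $\alpha(e) = b$ for all $e \in C(b)$, nothing remains to be done.

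Given $x \in Z(b)$, Proposition \ref{different definition of Z(b)} produces a sequence $(x_n)$ in $U$ with $x_n \to x$ in $S$ and $x_n \to b$ in $B(U)$. Because $E(U)$ is a compact surface with boundary, a subsequence $(x_{n_k})$ converges to some $e \in E(U)$. I will first rule out $e \in U = \omega(U)$: the set $U$ is open in $E(U)$ and $\omega$ is a homeomorphism onto its image, so if $e \in U$ then $x_{n_k} \to e$ inside $U$ and therefore inside $S$, forcing $x = e \in U$; this contradicts $Z(b) \subset fr_S U$, which was established in Proposition \ref{frontier union of impressions}. Hence $e$ is a genuine prime end. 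To identify the circle it lies on, use continuity of $\alpha: E(U) \to B(U)$ together with $\alpha|_U = \mathrm{id}$: in $B(U)$ we have $\alpha(e) = \lim \alpha(x_{n_k}) = \lim x_{n_k} = b$, so $e \in \alpha^{-1}(b) = C(b)$.

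It remains to show $x \in Y(e)$. Fix any chain $(V_i)$ representing $e$; each set $V_i' \subset E(U)$ is an open neighborhood of $e$ in $E(U)$, so for every $i$ there is a threshold $k_i$ with $x_{n_k} \in V_i' \cap U = V_i$ for all $k \ge k_i$. Since $x_{n_k} \to x$ in $S$, this yields $x \in cl_S V_i$ for every $i$, hence $x \in \bigcap_i cl_S V_i = Y(e)$, completing the argument.

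The only delicate point in the whole plan is the a priori possibility that the subsequential limit $e$ in $E(U)$ happens to be a prime point $\omega(y)$ of some interior point $y \in U$ rather than a prime end on $\partial E(U)$; this is the step where the hypothesis that $b$ is regular (implicitly, that $Z(b) \subset fr_S U$ is genuine) really gets used. Everything else is a routine consequence of the continuity of $\alpha$, the definition of the topology on $E(U)$ via the sets $V'$, and the definition of the impression $Y(e)$.
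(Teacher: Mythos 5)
Your proposal is correct and follows essentially the same route as the paper: extract a subsequential limit $e$ of $(x_n)$ in the compact space $E(U)$, rule out $e\in U$ since $x\notin U$, and conclude $x\in\bigcap_i cl_S V_i=Y(e)$ from the neighborhoods $V_i'$. The only (harmless) variation is how you place $e$ on the correct circle: you use continuity of $\alpha$ and $\alpha(e)=\lim x_{n_k}=b$, whereas the paper invokes the fact that $C(b)\sqcup P_n$ is a neighborhood basis of $C(b)$ in $E(U)$; both are standard facts from Mather's construction and your version is, if anything, slightly cleaner.
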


\begin{proof}
We already know that $Y(e) \subset Z(b)$ for every $e \in C(b)$.

Let $x \in Z(b)$ where $b=(P_n) \in b_{reg} (U)$.
We would like to prove that $x \in Y(e)$ for some $e$ in the corresponding circle of prime ends $C(b)$.

Since $Z(b) \subset fr_S U$ we have that $x \notin U$.

We know that $x \in Z(b)$ if and only if there exits a sequence $(x_k)$ in $U$, such that $x_k \rightarrow x$ in $S$ and $x_k \rightarrow b \in b(U)$ in $B(U)$.
We have that $(x_k)$ is a sequence in $E(U)$, which is compact.
Taking a subsequence if necessary, there exists $e \in E(U)$ such that $x_k \rightarrow e \in E(U)$.

We can not have $e \in U$. 
In fact, if $e \in U$ then since $x_k \rightarrow x$ in $S$ we would have $e=x$, contradicting the that $x \notin U$.

By the construction of $E(U)$, for every $n$ we have that $C(b) \sqcup P_n$ is a neighborhood of $C(b)$ in $E(U)$ (see section $10$ of \cite{Mather1982}).
We may assume that this neighborhood contains no other prime ends than those of $C(b)$.
Therefore $e \in C(b)$.

If $(V_i)$ is a chain representing $e$, then for every $i$ there exists $k_i$ such that $x_k \in V_i \subset cl_S V_i$ for $k \geq k_i$.
From this we conclude that $x \in cl_S V_i$ for every $i$ and $x \in Y(e)=\cap_i cl_S V_i $, with $e \in C(b)$.
\end{proof}

This proves items $(3)$ and $(4)$ of Theorem \ref{The frontier of domains with fixed prime ends} and its proof is complete.

\subsubsection{A result of Mather}

Finally, we would like to present a result of Mather under weaker hypotheses.

\begin{Proposition} \label{if p is in K then the invariant manifolds are in K}
Let $S$ be a connected surface, $f:S \rightarrow S$ an area preserving orientation preserving $C \e 1$ diffeomorphism and $K$ be a compact connected invariant set of $f$ such that every fixed point of $f$ in $K$ is non degenerate.

Let $p$ be a saddle fixed point of $f$ and assume that no branch of $p$ is a connection.

If $p \in K$ then $W_p \e u \cup W_p \e s \subset K$.
\end{Proposition}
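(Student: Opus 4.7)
The plan is to argue by contradiction, combining the Accumulation Lemma with the local analysis underlying Lemma \ref{local semiconjugacy}. By the Accumulation Lemma (Proposition \ref{the accumulation lemma}) each branch $L$ of $p$ satisfies $L\subset K$ iff $L\cap K\neq\varnothing$, and by part $(2)$ of Corollary \ref{corollary of the accumulation lemma} either $L\subset K$ or $L$ together with its two adjacent sectors is contained in a single component $W$ of $S\setminus K$. It therefore suffices to rule out the second alternative. Suppose, for contradiction, that some branch $L$ of $p$ lies in such a $W$; after passing to $f^2$ in case the eigenvalues of $df_p$ are negative (this does not affect the non-degeneracy of $p$, which is the only property used in the ensuing local argument) we may take $L$ and $W$ to be $f$-invariant. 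Since $L$ is a path in $W$ converging to $p\in fr_S W\subset K$, it defines an accessible prime end $e$ of $W$ with $X(e)=\{p\}$ and $\hat f(e)=e$.

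Next I would carry out the purely local portion of the proof of Lemma \ref{local semiconjugacy} in saddle coordinates near $p$ in which $f(x,y)=(\lambda x,\lambda^{-1}y)$: invoke Lemma \ref{preparation lemma} to represent $e$ by a chain with frontiers $\xi_n$ on small circles in the saddle norm, and Lemma \ref{frontiers of V_i and fV_i intersect for i>=i_0} to force $\xi_n\cap f(\xi_n)\neq\varnothing$ for large $n$. The cone argument of Lemma \ref{local semiconjugacy} then applies without change, since it uses only the linear dynamics in $V$ and the area-preserving property, not any global property of $fr_S W$. It identifies the connected component of $W\cap V$ containing $L$ with one of the four local shapes classified there and produces a local semiconjugacy $\phi_e\colon(a_1,a_2)\to Z(b)$. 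In particular the images $\phi_e((a_1,e))$ and $\phi_e((e,a_2))$ are local branches of $p$ lying in $fr_S W\subset K$, and by the Accumulation Lemma the corresponding full branches of $p$ are contained in $K$.

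Finally, to contradict the hypothesis that no branch of $p$ is a connection, I would propagate $\phi_e$ around the circle of prime ends $C(b)$ exactly as in the construction of $\phi\colon C(b)\to Z(b)$ in the proof of Theorem \ref{The frontier of domains with fixed prime ends}. The local semiconjugacies at the isolated fixed prime ends (each principal point being a saddle by the same local argument) show that $\hat f$ is orientation-preserving on $C(b)$ and has only finitely many fixed prime ends; on the arc between two consecutive fixed prime ends $e$ and $e'$ the dynamics of $\hat f$ is monotonic, so the extended semiconjugacy $\phi$ maps this arc onto an arc in $Z(b)\subset K$ joining $p$ to $p'=\phi(e')$. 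By construction this image arc is a branch of $p$ contained in $W^s_p\cap W^u_{p'}$ (or with the roles of stable and unstable reversed), that is, a saddle connection, contradicting the hypothesis. I expect the main obstacle to be precisely this last step: $S$ is only assumed to be a connected surface and $fr_S W$ may have infinitely many connected components, so Theorem \ref{The frontier of domains with fixed prime ends} cannot be invoked globally for $W$, and one must isolate from its proof only those parts that depend exclusively on the single circle $C(b)$ and on the non-degeneracy of fixed points of $f$ inside $Z(b)$.
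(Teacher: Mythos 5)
Your overall route is the paper's: assume a branch $L$ of $p$ escapes $K$, use item $(2)$ of Corollary \ref{corollary of the accumulation lemma} to place $L$ and its adjacent sectors in a component $U$ of $S\setminus K$, observe that the local branch of $L$ converges to an accessible fixed prime end $e$ with $X(e)=\{p\}$ and that all fixed points of $f$ in $Z(b)\subset fr_S U\subset K$ are non degenerate, and then extract from the prime-end structure theory that $Z(b)$ contains a connection through $p$, contradicting the hypothesis. The only real difference is that the paper simply cites Theorem \ref{The frontier of domains with fixed prime ends} at this point (after asserting that $fr_S U$ is compact with finitely many components), whereas you propose to re-derive the local semiconjugacy and its propagation around the single circle $C(b)$; that extra care is defensible given that the Proposition does not assume $S$ compact or orientable, but it is a re-proof, not a different argument.

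The one step that would genuinely fail is the passage to $f^{2}$ when the eigenvalues of $df_p$ are negative. The non-degeneracy hypothesis concerns fixed points of $f$ only, and the paper stresses that powers of $f$ cannot be taken freely: a period-two point of $f$ lying in $K$ may be a degenerate fixed point of $f^{2}$. Your parenthetical claim that only non-degeneracy of $p$ is used is true for the cone construction at $p$, but your concluding step — propagating the semiconjugacy to the neighbouring fixed prime end $e_2$ and identifying $\phi\big((e,e_2)\big)$ as a connection — needs every fixed point of the map in $Z(b)$ to be non degenerate (this is what makes fixed prime ends isolated and finite in number, forces $p_2=\phi(e_2)$ to be a saddle, and yields the local semiconjugacy at $e_2$). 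Note also that if $f(L)\neq L$ the prime end $e$ defined by $L$ need not be fixed by $\hat f$ in the first place, so the setup itself changes before any power is taken. The paper's proof sidesteps all of this by treating $L$ as invariant ("Since $L$ is invariant, $U$ is also invariant"), i.e., it only addresses the case of positive eigenvalues; if you restrict to that case and drop the $f^{2}$ reduction, your argument closes.
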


\begin{proof}
Assume by contradiction that a branch $L$ of $p$ is not contained in $K$.

By item $(2)$ of Corollary \ref{corollary of the accumulation lemma}, $L$ and its adjacent sectors are contained in one component $U$ of $S-K$.

We know that $fr_S U$ is compact and has a finite number of connected components.
Since $L$ is invariant, $U$ is also invariant.
If we parameterize the local branch of $L$ by $\alpha:(0,1] \rightarrow L$ so that $\underset{t \rightarrow 0 \e +}{\lim}\alpha (t) = p$ then, as $t \rightarrow 0 \e +$, we have that $\alpha(t) \rightarrow e$ for some $e \in E(U)$ and $\alpha(t) \rightarrow b$ for some $b \in B(U)$.

Since $Z(b) \subset fr_S U \subset K$, all fixed points of $f$ in $Z(b)$ are non degenerate.
We have that $e$ is a fixed point of $\hat{f}:C(b) \rightarrow C(b)$ and that $X(e)=\{p\} \subset Z(b)$.

By Theorem \ref{The frontier of domains with fixed prime ends} there exists a finite to one semiconjugacy $\phi:C(b) \rightarrow Z(b)$ and $Z(b)$ is a finite union of connections that contains at least one branch of $p$.

This contradicts the fact that no branch of $p$ is a connection.  
\end{proof}



\subsection{Some examples and simple consequences of Theorem \ref{The frontier of domains with fixed prime ends}}

\begin{Corollary} \label{Corollary 1 the frontier of domains with fixed prime ends}

Let $S$ be a compact connected orientable surface and $f:S \rightarrow S$ an area preserving orientation preserving $C \e 1$ diffeomorphism of $S$.
Assume that $U$ is a periodic domain of $S$ such that $fr_S{U}$ has a finite number of connected components and that every ideal boundary point of $U$ is regular. 
Suppose that all periodic points of $f$ in $fr_S U$ are non degenerate.

Let $E(U)$ be the prime ends compactification of $U$ and assume that all induced maps on circles of prime ends have rational rotation number.

Then $fr_S U$ is the union of finitely many saddle connections and the corresponding saddles.
\end{Corollary}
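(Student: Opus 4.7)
The plan is to reduce the statement to Theorem~\ref{The frontier of domains with fixed prime ends}, applied to suitable iterates of $f$ at each ideal boundary point $b$, and then translate the conclusions back into $f$-language.

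Since $fr_S U$ has a finite number of connected components, Proposition~\ref{finitely many components} gives that $b(U)$ is finite; by hypothesis every $b\in b(U)$ is regular, so only finitely many circles of prime ends are involved. Because $U$ is a periodic domain, some iterate $f^{\sigma}$ preserves $U$, and $(f^{\sigma})_*$ permutes the finite set $b(U)$. Fix $b\in b(U)$, let $k_b$ be its $(f^{\sigma})_*$-period, and note that $\widehat{f^{\sigma k_b}}|_{C(b)}$ is an orientation-preserving circle homeomorphism with rational rotation number by hypothesis. Letting $q_b$ be the denominator of this rotation number in lowest terms, $\widehat{f^{\sigma k_b q_b}}|_{C(b)}$ admits a fixed prime end. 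Take $M_b$ to be a positive multiple of $\sigma k_b q_b$ chosen, in addition, so that every fixed point of $f^{M_b}$ in $Z(b)$ is non-degenerate for $f^{M_b}$ (see the final paragraph below).

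Theorem~\ref{The frontier of domains with fixed prime ends} applied to $(f^{M_b}, U, b)$ then yields that $Z(b)$ is a connected union of finitely many $f^{M_b}$-saddle connections together with the corresponding $f^{M_b}$-saddle fixed points. An eigenvalue analysis based on area- and orientation-preservation shows that any such saddle fixed point of $f^{M_b}$ comes from a periodic saddle of $f$: if $p$ is an $f$-periodic point of period $\tau_p \mid M_b$ and $df^{\tau_p}_p$ has complex eigenvalues on the unit circle, then $df^{M_b}_p = (df^{\tau_p}_p)^{M_b/\tau_p}$ either has non-real eigenvalues (hence not a saddle) or eigenvalues $\pm 1$ (either excluded by non-degeneracy or giving non-distinct eigenvalues, hence not a saddle). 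Since the invariant manifolds of a periodic point are intrinsic to its orbit and independent of the iterate, $f^{M_b}$-saddle connections coincide with $f$-connections between periodic saddles of $f$. Running this argument at each $b\in b(U)$ and using $fr_S U = \bigcup_{b\in b(U)} Z(b)$ (Proposition~\ref{frontier union of impressions}), the claim follows.

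The main obstacle is the selection of $M_b$ carrying the non-degeneracy property. An $f$-periodic saddle in $Z(b)$ automatically remains non-degenerate under any iterate, but an $f$-periodic elliptic point $p$ with rotation angle $\alpha_p$ and period $\tau_p$ becomes degenerate for $f^{M_b}$ precisely when $M_b\,\alpha_p/\tau_p \in 2\pi\mathbb{Z}$, i.e.\ when $M_b$ lies in a specific arithmetic progression determined by $p$. The non-degeneracy hypothesis makes periodic points of any fixed period isolated in the compact set $fr_S U$, so the fixed points of $f^{M_b}$ in $Z(b)$ form a finite set and only finitely many bad progressions must be avoided; this can be arranged by a careful arithmetic choice within the multiples of $\sigma k_b q_b$. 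Once this is secured, the remaining steps are routine consequences of results already established.
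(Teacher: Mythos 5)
Your reduction --- pass to an iterate $f^{M_b}$ that fixes $U$ and $b$ and has a fixed prime end on $C(b)$ (extracted from the rational rotation number via Poincar\'e's theorem), apply Theorem~\ref{The frontier of domains with fixed prime ends} to $f^{M_b}$, check that $f^{M_b}$-saddles and their connections are $f$-periodic saddles and $f$-connections, and assemble over the finitely many $b\in b(U)$ using Proposition~\ref{frontier union of impressions} --- is exactly the intended route, and everything up to the selection of $M_b$ is correct.

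The weak point is your last paragraph. First, the finiteness claim is circular: a periodic point $p$ of minimal period $\tau$ that is non-degenerate for $f^{\tau}$ is isolated in $\mathrm{Fix}(f^{\tau})$, but not necessarily in $\mathrm{Fix}(f^{M})$ for $\tau\mid M$; if $df^{\tau}_p$ is a rational rotation then $df^{M}_p$ may be the identity and $p$ may be accumulated by $M$-periodic points, so $\mathrm{Fix}(f^{M})\cap Z(b)$ need not be finite \emph{before} you know that all of its points are non-degenerate for $f^{M}$ --- which is precisely what you are trying to arrange. Second, even granting finiteness for each fixed $M$, the family of congruence conditions to be avoided changes with $M$ (enlarging $M$ brings new periodic points into $\mathrm{Fix}(f^{M})$), and you give no argument that some single $M_b$ satisfies all of its own constraints; one can devise configurations of elliptic periodic points, with periods and rotation numbers chosen adversarially, for which every multiple of $\sigma k_b q_b$ admits a degenerate fixed point in $Z(b)$. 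The correct resolution is not arithmetic but a matter of reading the hypothesis: in this corollary ``all periodic points of $f$ in $fr_S U$ are non-degenerate'' must be taken to mean that for every $n\geq 1$ and every $p\in fr_S U$ with $f^{n}(p)=p$, the number $1$ is not an eigenvalue of $df^{n}_p$. Under that reading the non-degeneracy hypothesis of the theorem holds for $f^{M_b}$ for \emph{any} multiple $M_b$ of $\sigma k_b q_b$ and your ``main obstacle'' disappears; under the minimal-period reading your selection of $M_b$ is not justified and the argument does not close.
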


\begin{Corollary} \label{Corollary 2 the frontier of domains with fixed prime ends}
Let $S$ be a compact connected orientable surface and $f$ an area preserving orientation preserving $C \e 1$ diffeomorphism of $S$. Let $U$ be an invariant set homeomorphic to an open disk and suppose that all fixed points of $f$ in $fr_S U$ are non degenerate. If $fr_S U$ has more then one point then $fr_S U$ is the connected union of finitely many saddle connections and the corresponding saddles.
\end{Corollary}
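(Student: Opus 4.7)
The plan is to reduce the corollary to Theorem \ref{The frontier of domains with fixed prime ends} by carrying out the disk specialization.

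First I would establish the prime-ends setup. Because $U$ is homeomorphic to an open disk and so has a single topological end, $b(U)$ consists of exactly one ideal boundary point $b$; by Proposition \ref{frontier union of impressions}, $fr_SU=Z(b)$, and since $U$ is invariant while $b(U)=\{b\}$, the map $f_*$ must fix $b$. The hypothesis that $fr_SU$ has more than one point translates to $Z(b)$ having more than one point, so $b$ is regular, the circle of prime ends $C(b)$ is defined, and the induced map $\hat f\colon C(b)\to C(b)$ is an orientation-preserving homeomorphism. The frontier $fr_SU$ is compact with finitely many (indeed one) connected components, and the non-degeneracy condition on fixed points of $f$ in $fr_SU=Z(b)$ is exactly the corresponding hypothesis of Theorem \ref{The frontier of domains with fixed prime ends}.

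With these identifications in hand, and once the existence of a fixed prime end in $C(b)$ is granted (a hypothesis consistent with the version of this statement in the introduction and with the label of the corollary), Theorem \ref{The frontier of domains with fixed prime ends} applies verbatim. Item (4) of that theorem states that $Z(b)$ is the connected union of finitely many saddle connections and the corresponding saddles; substituting $fr_SU=Z(b)$ delivers the conclusion.

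The only genuinely non-trivial input is therefore the existence of a fixed prime end. This does not follow from the other hypotheses alone: a rigid irrational rotation of $S^2$ with $U$ a hemisphere satisfies every other condition while $\hat f$ has no fixed prime end, and the conclusion fails. Consequently the main obstacle, and the only place where the proof is not purely bookkeeping, is extracting or assuming a fixed prime end; once supplied, the corollary follows immediately from Theorem \ref{The frontier of domains with fixed prime ends} via the identification of $fr_SU$ with $Z(b)$ made above.
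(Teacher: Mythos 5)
Your reduction is exactly the paper's intended derivation: the paper offers no written proof of this corollary, listing it as a direct consequence of Theorem \ref{The frontier of domains with fixed prime ends}, and your bookkeeping ($b(U)=\{b\}$, $fr_SU=Z(b)$ connected and compact, $f_*(b)=b$, regularity of $b$ from $|Z(b)|>1$, then item (4) of the theorem) is the correct and complete way to carry it out. You are also right that the statement as printed in Section 3.3 is missing the hypothesis that $C(b)$ contains a fixed prime end (equivalently, that $\hat f$ has rational rotation number, as in Corollary \ref{Corollary 1 the frontier of domains with fixed prime ends}): the version of this result stated in the introduction does include that hypothesis, and your irrational rigid rotation of $S^2$ with $U$ a hemisphere is a valid counterexample to the literal statement, since the equator contains no fixed points at all (so the non-degeneracy hypothesis holds vacuously) yet is not a union of saddle connections. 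With the fixed-prime-end hypothesis restored, your argument is complete.
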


\begin{Example}
Let $H:\mathbb{R} \e 2 \rightarrow \mathbb{R}$ given by $H(x,y)=\sin{(\pi x) \sin{(\pi y)}}$.

We have that $H(x+2k,y+2l)=H(x,y)$ for every $(k,l) \in \mathbb{Z} \e 2$ and $H$ is periodic in both variables with period $2$.

Consider the Hamiltonian flow generated by $H$ and let $f:\mathbb{R} \e 2 \rightarrow \mathbb{R} \e 2$ be its time one map.
We know that $f$ preserves the two dimensional Lebesgue measure.

If we look at the Hamiltonian equations of the flow and the level curves of $H$, then we conclude the following.

\begin{enumerate}
    \item $f(x+2k,y+2l)=f(x,y)$ for every $(k,l) \in \mathbb{Z} \e 2$.
    \item All fixed points are saddles or elliptic.
    \item A fixed point $p$ is a saddle if and only if $p \in \mathbb{Z} \e 2$.
    \item A fixed point is elliptic if and only if $(x,y)=(k+\frac{1}{2},l+\frac{1}{2})$ for $(k,l) \in \mathbb{Z} \e 2$.
    \item The segments $(k,k+1) \times \{l\}$ and $\{k\} \times (l,l+1)$, where $(k,l) \in \mathbb{Z} \e 2$ are all saddle connections.
\end{enumerate}

With these properties in mind we will construct a couple of examples on the torus $\mathbb{T} \e 2$.  

We will think of $\mathbb{T} \e 2$ as the square with vertices at $(\pm 2, \pm 2$), with opposite sides identified.
We will still denote by $f$ as the induced map on $\mathbb{T} \e 2$.

In the first example we consider $K=\big([-1,1] \times \{0\}\big) \cup \big(\{0\} \times [-1,1]\big) $ and $U = \mathbb{T} \e 2 \setminus K $.
We have that $U$ is a surface of genus one that has only one ideal boundary point, $b(U)=\{b\}$, and $B(U)$ is homeomorphic to a torus.

We have that $K$ and $U$ are invariant under $f$. 
$K=fr_S U=Z(b)$ is made of $5$ fixed points of saddle type, $(0,0)$, $(\pm1,0)$ and $(0,\pm1)$,
and $4$ connections, $(-1,0) \times \{0\}$, $(0,1) \times \{0\}$, $\{0\} \times (-1,0)$ and $\{0\} \times (0,1)$.

On the other hand, $E(U)$ is a surface of genus one and one boundary component $C(b)$. 
The restriction of $\hat{f}$ to $C(b)$ has $8$ fixed points.
If we group these fixed points into two groups of 4 points each, so that in a circular order the points of each group alternate, then the semiconjugacy $\phi:C(b) \rightarrow Z(b)$ takes one group into $(0,0)$, and the other 4 fixed points into each one of the remaining saddles of $K$, $(\pm1,0)$ and $(0,\pm1)$. 

Each saddle connection is the image of two arcs between fixed points of $C(b)$.

Now we describe the second example.

Let $C$ be the circle $[-2,2] \times\{0\}$, $I_1=\{0\} \times [0,1]$, $I_2=\{0\} \times [-1,0]$, $K=C \cup I_1 \cup I_2$ and $U=\mathbb{T} \e 2 \setminus K$.

We have that $K$ is compact and connected, $K=fr_S U$, $U$ has two ideal boundary points, $b(U)=\{b_1,b_2\}$ and $B(U)$ is homeomorphic to a sphere.
If we approach $C \cup I_1$ using points of $U$ above $C$ and call this ideal boundary point $b_1$ then $Z(b_1)=C \cup I_1$. Approaching from below we call the ideal boundary point $b_2$ and $Z(b_2)=C \cup I_2$.
Both $Z(b_1)$ and $Z(b_2)$ have $5$ saddles and $5$ connections.

We have that $E(U)$ is a compact surface of genus zero and two boundary components, $C(b_1)$ and $C(b_2)$.
Each circle of prime ends has $6$ fixed points of $\hat{f}$.

Each semi conjugacy $\phi_i:C(b_i) \rightarrow Z(b_i)$ maps two fixed points of $C(b_i)$ into $(0,0)$ and the other fixed points of $C(b_i)$ are mapped into one fixed point of $Z(b_i)$ each.
$I_i$ contains the only connection of $Z(b_i)$ which is the image of two arcs between fixed points of $C(b_i)$.
\end{Example}



\section{The accumulation of branches and homoclinic points}

Now we present some results about the accumulation of invariant manifolds and the existence of homoclinic points in low genus.

If $L$ is a branch of a saddle, we will denote by $L[q_1,q_2]$ the arc inside $L$ with ends points $q_1$ and $q_2$. 
We will need the following result that appeared in \cite{Ol2}, but which was already known by experts since \cite{Ol1}.

\begin{Proposition} \label{accumulation results}
Let $S$ be a connected compact orientable surface, $f:S \rightarrow S$ an area preserving orientation preserving $C \e 1$ diffeomorphism of $S$. Let $p$ be a saddle fixed point of $f$ and assume that the branches of $p$ are invariant.
\begin{enumerate}

\item Let $L$ be a branch of $p$ and suppose that all fixed points of $f$ contained in $cl_{S}L$ are non degenerate.
Then either $L$ is a connection or $L$ accumulates on both sectors adjacent to itself. 
    
\item Assume that the four branches of $p$ are not connections and that all fixed points of $f$ contained in $cl_{S}(W_p \e u \cup W_p \e s)$  are non degenerate. 
Then we know the following.
    \begin{enumerate}
        \item The four branches of $p$ have the same closure in $S$.
        \item If $S$ is the sphere or the torus then all pairs of adjacent branches of $p$ intersect.
    \end{enumerate}

\end{enumerate}
\end{Proposition}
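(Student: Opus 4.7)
The plan is to deduce item~(1) from Theorem~\ref{The frontier of domains with fixed prime ends} by a contradiction argument, then leverage (1) for item~(2)(a) through the Accumulation Lemma and Corollary~\ref{corollary of the accumulation lemma}, and finally handle item~(2)(b) with a topological argument tailored to $S^2$ or $T^2$.

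For item~(1), I would argue by contradiction: assume $L$ is not a connection and fails to accumulate on one of its two adjacent sectors $\Sigma$. Then some $\Sigma(V_0)$ is disjoint from $cl_S L$, so $\Sigma(V_0)$ lies in a component $U$ of $S\setminus cl_S L$. The hypothesis that the branches of $p$ are invariant forces the eigenvalues of $df_p$ to be positive, so each sector is $f$-invariant and hence so is $U$. Proposition~\ref{g+1 ideal boundary points with boundary} applied to the connected compact set $K=cl_S L$ bounds the ideal boundary points of $U$ by $g+1$, and Proposition~\ref{finitely many components} then gives that $fr_S U$ has finitely many components. Because $\Sigma(V_0)$ is $f$-invariant, the ideal boundary point $b$ of $U$ ``above'' $\Sigma$ is fixed by $f_*$. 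The next step is to produce a fixed prime end $e\in C(b)$ with $X(e)=\{p\}$: one approaches $p$ from within $\Sigma(V_0)$ asymptotically along the local stable branch of $p$ bordering $\Sigma$, and the $f$-invariance of that local branch together with the orientation-preserving character of $\hat f$ on $C(b)$ ensures the resulting prime end is fixed. Theorem~\ref{The frontier of domains with fixed prime ends} then gives $Z(b)$ as a finite connected union of saddle connections through saddles. Since the local branch of $L$ also borders $\Sigma$ and lies in $fr_S U$, it is contained in $Z(b)$; the Accumulation Lemma (Proposition~\ref{the accumulation lemma}) applied to the compact invariant $Z(b)$ forces $L\subset Z(b)$, making $L$ a connection, a contradiction.

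For item~(2)(a), let $L_1,L_2$ be adjacent branches of $p$ sharing a sector $\Sigma$. By~(1) each accumulates on $\Sigma$. Let $U_1$ be the component of $S\setminus cl_S L_1$ containing some $\Sigma(V)$. If $L_2\cap U_1\neq\varnothing$, then item~(1) of Corollary~\ref{corollary of the accumulation lemma} places $L_2$ and its two adjacent sectors inside $U_1$, including some representative $\Sigma(V')$, contradicting $cl_S L_1\cap\Sigma(V')\neq\varnothing$. Hence $L_2\cap U_1=\varnothing$, so $L_2\subset cl_S L_1$, and the Accumulation Lemma extends this to $cl_S L_2\subset cl_S L_1$. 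Exchanging $L_1$ and $L_2$ gives the reverse inclusion, hence $cl_S L_1=cl_S L_2$; chaining this equality around the cyclic adjacency of the four branches of $p$ yields the common closure.

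Item~(2)(b) is the main obstacle, since the equality of closures from~(2)(a) does not by itself produce an actual intersection of adjacent $L_1,L_2$ away from $p$, and this is precisely where the low-genus assumption on $S$ is used. Assume for contradiction $L_1\cap L_2=\varnothing$. By~(1), pick $x\in L_1\cap\Sigma(V)$ for $V$ arbitrarily small; then $x$ sits close to the local branch of $L_2$ bordering $\Sigma$. I would concatenate an arc of $L_1$ from near $p$ out to $x$ with a short arc from $x$ back to $p$ skirting the local branch of $L_2$, producing a loop $\sigma\subset S$. On $S^2$, $\sigma$ bounds a disk $D$ by the Jordan curve theorem; on $T^2$, shrinking $V$ into a fixed chart at $p$ makes $\sigma$ null-homotopic and again bounds a disk $D$. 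One then runs an area-preservation plus Brouwer Translation Theorem argument inside $D$ to force either a new fixed point of $f$ in the interior of $D$ --- ruled out by the non-degeneracy hypothesis on $cl_S(W_p^u\cup W_p^s)$ together with index considerations --- or a genuine crossing of $L_1$ and $L_2$ inside $D$, yielding the desired contradiction. The hardest technical points are verifying null-homotopy of $\sigma$ on $T^2$ for a workable choice of the concatenation, and extracting Brouwer-type conclusions from $f$ in the area-preserving $C^1$ setting without any hyperbolic structure; this is exactly where higher-genus surfaces would break the argument.
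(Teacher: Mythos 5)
Your item (1) follows essentially the paper's own route: pass to the invariant component $U$ of $S\setminus cl_S L$ containing a sector representative, note $fr_S U$ has finitely many components, produce a fixed prime end over $p$ by a path into the sector, and invoke Theorem \ref{The frontier of domains with fixed prime ends} to force $L$ to be a connection. Your item (2a) is the right idea but the setup is garbled: since $L_1$ accumulates on $\Sigma$, \emph{no} component of $S\setminus cl_S L_1$ contains a full representative $\Sigma(V)$, so your $U_1$ does not exist; the clean version is to apply item (2) of Corollary \ref{corollary of the accumulation lemma} with $K=cl_S L_1$ --- if $L_2\not\subset cl_S L_1$ then $L_2$ together with a representative of $\Sigma$ would lie in one component of $S\setminus cl_S L_1$, contradicting the accumulation from (1) --- and then chain the inclusions around the four branches.

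The genuine gap is item (2b). On $T^2$ your loop $\sigma$, built from a long arc of $L_1$ out to a point $x\in L_1\cap\Sigma(V)$ closed by a short arc back to $p$, is in general \emph{not} null-homotopic: shrinking the chart $V$ only controls the short closing arc, while the long arc of $L_1$ can represent an arbitrary homology class of the torus. This is precisely the obstruction that separates the torus from the sphere, and it is why the paper's proof of (2b) on $T^2$ has two separate steps, neither of which your single-loop construction supplies: first, the existence of \emph{one} homoclinic point is obtained by a covering-space argument (a $3$-fold cover of the torus, following \cite{Ol1}, \cite{SaLe}, \cite{Ol2}); second, given one homoclinic point $q$, Lemma \ref{disconnecting the thesis} shows that the $2g+1$ iterates $\alpha_i=f^i\bigl(W_+^u[q,f(q)]\cup W_+^s[q,f(q)]\bigr)$ must disconnect a genus-$g$ surface for homological reasons, after which recurrence and the common closure from (2a) force the remaining branches to cross $\Omega$ (Lemma \ref{components}). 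In addition, even on $S^2$ your appeal to the Brouwer Translation Theorem ``inside $D$'' is not an argument: $D$ is not invariant, $f$ restricted to $D$ is not a fixed-point-free plane homeomorphism, and the classical sphere argument instead uses area-preservation and recurrence of the complementary domains. As written, (2b) is a strategy sketch whose two critical steps --- null-homotopy on the torus and the Brouwer/index step --- are respectively false in general and unproven.
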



We will give a brief sketch of the proof of item $(1)$ using our Theorem \ref{The frontier of domains with fixed prime ends}. Then we dedicate the rest of the section to give a new proof of item $(2b)$ in the case of the torus. The arguments in this proof will be essential in the proof of our main result about the Standard Map.


Let $(x,y)$ be continuous coordinates in a neighborhood $V$ of $p$, with $p$ at the origin and where  $f(x,y) =(\lambda x ,\lambda \e {-1} y)$, where $\lambda>1$.
We may assume that $L$ is the branch that contains the local branch $\{(x,y) \in V \hspace{0.5mm}\vert\hspace{0.5mm} x>0 ,y=0\}$.

We are going to prove that if $L$ does not accumulate on one of its adjacent sectors then it is a connection.
Suppose that $L$ does not accumulate on, say,
$\Sigma_1=\{(x,y) \in V  \hspace{0.5mm}\vert\hspace{0.5mm} x>0, y>0\}$. 
So, making $V$ smaller if necessary, we can assume that $cl_S L\cap \Sigma_1 = \varnothing$.
Let $U$ be the component of $S \setminus cl_S L$ that contains $\Sigma_1$. 
Obviously $U$ is invariant.
Since $cl_S L$ is compact and connected, by Proposition \ref{finitely many components} we have that $fr_S U$ has finitely many connected components and $U$ has a finite number ideal boundary points.

If $\beta :(0,1) \rightarrow U$ is defined by $\beta(t)=(t,t)$, then as $t \rightarrow 0 \e +$ we have that $\beta(t) \rightarrow p$ in $S$, $\beta(t) \rightarrow e$ for some $e \in E(U)$ and $\beta(t) \rightarrow b$, for some ideal boundary point of $U$.
Since $ (f\circ \beta)(t) \rightarrow e$ as $t \rightarrow 0 \e +$, we have that $e$ is a fixed point of $\hat{f}:C(b) \rightarrow C(b)$.
By item $(4)$ of Theorem \ref{The frontier of domains with fixed prime ends} we have that $L$ is a connection contained in $Z(b)$.
This proves item $(1)$.

The proof of $(2a)$ follows from $(1)$ and the accumulation lemma.
The proof of item $(2b)$ in the case of the sphere is well known. It first appeared in \cite{Ol1}. See also \cite{Ol2} and \cite{FrLe}.

The proof of item $(2b)$ in the case of the torus has two parts. 
The first is to show that $p$ has homoclinic points. The second is to show that, once it has one homoclinic point, then every pair of adjacent branches of $p$ intersect.

The proof that $p$ has at least one homoclinic point can be done with the arguments that appeared in \cite{Ol1}. 
It can also de done with an idea that appeared in \cite{SaLe} of taking suitable coverings. 
There they are interested in surfaces of genus greater than one, see Lemmas $5.2$, $5.3$ and $5.4$ of \cite{SaLe}.
But if we adapt the argument and take a 3 fold covering of the torus, we obtain another torus with three fixed points, and then it is easy prove the existence of homoclinic points. See \cite{Ol2} for details.

Therefore we are only going to prove the if $p$ has a homoclinic point then the four branches of $p$ have homoclinic points. 

This is true for surfaces of arbitrary genus.



\begin{Proposition} \label{homoclinic 2}
Let $S$ be a connected compact orientable surface of genus $g$ and $f: S \rightarrow S$ an orientation preserving area preserving $C \e 1$ diffeomorphism of $S$. Let $p$ be a saddle fixed point of $f$ and assume that the branches of $p$ are invariant.

Assume that the four branches of $p$ are not connections and that all fixed points of $f$ contained in $cl_{S}(W_p \e u \cup W_p \e s)$ are non degenerate.

Then we know the following:
\begin{enumerate}
    \item If $g=1$ then $p$ has homoclinic points.
    \item For any $g\geq 1$, if $p$ has homoclinic points then all pairs of adjacent branches of $p$ intersect.
\end{enumerate}
\end{Proposition}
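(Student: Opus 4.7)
The statement has two parts; I handle them separately.

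\textbf{Part (1).} Following the strategy sketched in the text preceding the statement, I would pass to a three-fold covering $\pi\colon \tilde T^2 \to T^2$ and a lift $\tilde f$ of $f$ for which each of the three preimages $\tilde p_1, \tilde p_2, \tilde p_3$ of $p$ is a fixed saddle of $\tilde f$ with invariant non-connection branches. Item (2a) of Proposition \ref{accumulation results}, applied to $\tilde f$, forces the four branches of each $\tilde p_i$ to share a common closure $\tilde K_i$ in $\tilde T^2$. A topological and combinatorial analysis of how these twelve branches can fit on a genus-one surface—together with the non-connection hypothesis—produces a homoclinic intersection at some $\tilde p_i$, which projects under $\pi$ to a homoclinic intersection at $p$. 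The detailed argument appears in \cite{Ol2}.

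\textbf{Part (2).} In dimension two a saddle has exactly four branches, two stable and two unstable, and every unstable--stable pair is adjacent (shares a sector). After relabelling I may assume the homoclinic point $q$ satisfies $q \in L_1^u \cap L_1^s$, where $(L_1^u, L_1^s)$ bounds sector $\Sigma_1$. By the symmetry of the argument it suffices to show the other pair adjacent to $L_1^u$—namely $(L_1^u, L_2^s)$, bounding sector $\Sigma_4$—also intersects; the remaining adjacent pairs follow by the same argument applied in turn.

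The strategy is to apply Theorem \ref{The frontier of domains with fixed prime ends} to an invariant domain whose frontier contains $L_1^u$ and $L_2^s$, and derive a contradiction with the non-connection hypothesis. Suppose for contradiction that $L_1^u \cap L_2^s = \varnothing$. Set $K = cl_S L_1^u$; by item (2a) of Proposition \ref{accumulation results}, $K = cl_S L_2^s$ as well, so $K$ is a compact, connected, $f$-invariant subset of $S$ containing both branches. Let $U$ be the component of $S \setminus K$ that approaches $p$ from within sector $\Sigma_4$. Since the invariance of the branches forces the eigenvalues of $df_p$ to be positive, each sector is $f$-invariant, and hence so is $U$. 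Because $K$ is compact and connected, Proposition \ref{g+1 ideal boundary points with boundary} applied with $m=1$ gives that $U$ has at most $g+1$ ideal boundary points, so $fr_S U \subset K$ has finitely many connected components. All fixed points of $f$ in $fr_S U \subset K \subset cl_S(W_p^u \cup W_p^s)$ are non-degenerate by hypothesis. Finally, a path $\beta$ in $U \cap \Sigma_4$ tending to $p$ defines an ideal boundary point $b$ with $p \in Z(b)$ together with a prime end $e \in C(b)$ whose principal point is $p$; invariance of $U$ and $\Sigma_4$ makes $f \circ \beta$ tend to the same prime end, so $e$ is fixed. Theorem \ref{The frontier of domains with fixed prime ends} then concludes that $Z(b)$ is a finite union of saddle connections and the corresponding saddles. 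But the local arc of $L_1^u$ at $p$ lies in $Z(b)$, so since $Z(b)$ is made of whole branches that are connections, the entire branch $L_1^u$ must be a connection. This contradicts the hypothesis.

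\textbf{Main obstacle.} The delicate step is choosing $U$ and verifying that it supports a fixed prime end whose principal point is exactly $p$. This requires both that $K$ not separate $\Sigma_4$ into infinitely many pieces accumulating at $p$, and that the piece of $\Sigma_4 \cap U$ near $p$ be a clean local wedge so that the paths $\beta$ and $f \circ \beta$ realize the same prime end. The uniform bound of Proposition \ref{g+1 ideal boundary points with boundary} is crucial here, since it limits the number of ideal boundary points of $U$ to $g+1$ and thereby rules out pathological accumulation, allowing Theorem \ref{The frontier of domains with fixed prime ends} to apply cleanly.
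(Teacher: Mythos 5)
Your Part (1) is handled exactly as in the paper (the three-fold covering trick, with details deferred to \cite{Ol2}), so there is nothing to object to there.

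Your Part (2), however, has a genuine gap, and it is located precisely at the step you flag as the ``main obstacle''. You assume $L_1^u\cap L_2^s=\varnothing$ and want a component $U$ of $S\setminus K$ (with $K=cl_S L_1^u=cl_S L_2^s$) that ``approaches $p$ from within $\Sigma_4$'', carrying a path $\beta\subset U\cap\Sigma_4$ with $\beta(t)\to p$ that determines a \emph{fixed} prime end with principal point $p$. But since $L_1^u$ is not a connection, item $(1)$ of Proposition \ref{accumulation results} forces $L_1^u$ to accumulate on \emph{both} of its adjacent sectors, in particular on $\Sigma_4$; hence $K$ meets every representative of $\Sigma_4$ arbitrarily close to $p$. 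There is therefore no ``clean local wedge'': points of $\Sigma_4\setminus K$ near $p$ may be scattered among many components of $S\setminus K$, no single component need admit a path tending to $p$ through $\Sigma_4$, and even if $\beta$ exists, $f\circ\beta$ converges to $\hat f(e)$, which equals $e$ only when $\beta$ and $f\circ\beta$ can be joined near $p$ inside $U$ --- exactly what the accumulation of $K$ in $\Sigma_4$ obstructs. (In the paper's proof of Proposition \ref{accumulation results}(1) this works only because there the sector is assumed \emph{not} accumulated by $L$, so an entire representative of the sector lies in $U$.) Proposition \ref{g+1 ideal boundary points with boundary} does not repair this: it bounds the number of ideal boundary points of a single residual domain, not the number or the geometry of the components of $S\setminus K$ near $p$. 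A further warning sign is that your argument never uses the hypothesis that $p$ has a homoclinic point, so if it were correct it would prove the conclusion for all genera with no homoclinic hypothesis, contradicting the logical structure of the proposition itself.

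For contrast, the paper's route is combinatorial and homological: the given homoclinic point $q\in W_+^u\cap W_+^s$ is used to build the $2g+1$ closed curves $\alpha_i=f^i\bigl(W_+^u[q,f(q)]\cup W_+^s[q,f(q)]\bigr)$, whose union $\Omega$ disconnects $S$ by an intersection-number argument (Lemma \ref{disconnecting the thesis}); recurrence and the common closure of the four branches then force $W_-^u$ and $W_-^s$ to cross $fr_S A\subset\Omega\subset W_+^u\cup W_+^s$, and one bootstraps from the new intersections. You would need to adopt some version of that mechanism (or supply a genuinely new one) rather than routing Part (2) through Theorem \ref{The frontier of domains with fixed prime ends}.
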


As we said above, the proof of item $(1)$ is known.
Now, we would like to prove that for any $g\geq 1$, if $p$ has homoclinic points then all pairs of adjacent branches of $p$ intersect.

We need the following result:

\begin{Lemma} \label{disconnecting the thesis}
Let $M$ be a compact orientable surface of genus $g$ and $\alpha_0,...,\alpha_{2g}$ be a collection of $2g+1$ closed curves of $M$ such that for every $i=0,...,2g$ we have that $\alpha_i \setminus \underset{j \neq i}{\bigcup}\alpha_j \neq \varnothing$. 
Then $M \setminus \underset{0 \leq i \leq 2g}{\bigcup}\alpha_i $ is disconnected.
\end{Lemma}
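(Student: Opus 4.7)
The plan is a mod-2 homology argument based on the fact that $\dim_{\mathbb{Z}/2}H_1(M;\mathbb{Z}/2)=2g$. First, I would fix a CW structure on $M$ in which each $\alpha_i$ is a subcomplex, refining it (using the hypothesis $\alpha_i\setminus\bigcup_{j\ne i}\alpha_j\ne\varnothing$) so that each $\alpha_i$ contains at least one $1$-cell $e_i$ belonging to no $\alpha_j$ with $j\ne i$: pick a point of $\alpha_i\setminus\bigcup_{j\ne i}\alpha_j$ and subdivide the complex around it. Since $\alpha_i$ is a closed curve, the sum $C_i$ of its edges is a $\mathbb{Z}/2$-$1$-cycle representing $[\alpha_i]\in H_1(M;\mathbb{Z}/2)$.

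Second, the $2g+1$ classes $[\alpha_0],\ldots,[\alpha_{2g}]$ must be linearly dependent in $H_1(M;\mathbb{Z}/2)\cong(\mathbb{Z}/2)^{2g}$, so some non-empty $I\subset\{0,\ldots,2g\}$ satisfies $\sum_{i\in I}[\alpha_i]=0$. Lifting to the chain level, $C^*:=\sum_{i\in I}C_i=\partial D$ for some $D\in C_2(M;\mathbb{Z}/2)$. For any $i_0\in I$ the distinguished edge $e_{i_0}$ appears with coefficient $1$ in $C_{i_0}$ and coefficient $0$ in every other $C_i$, so $e_{i_0}\subset\mathrm{supp}(C^*)$ and $C^*\ne 0$. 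In particular $D\ne 0$, and because $\partial$ annihilates the fundamental class of the closed surface $M$, the chain $D$ is also not the sum of all $2$-cells.

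Third, partition the $2$-cells of $M$ into the family $F_1$ of those appearing in $D$ and the family $F_2$ of those not appearing; both are non-empty. Define $\tilde U_k\subset M\setminus\mathrm{supp}(C^*)$, for $k=1,2$, as the union of the open $2$-cells in $F_k$, together with every open $1$-cell not in $\mathrm{supp}(C^*)$ whose two adjacent $2$-cells lie in $F_k$, together with every $0$-cell not lying in $\mathrm{supp}(C^*)$ whose surrounding $2$-cells all lie in $F_k$. A local check verifies $\tilde U_k$ is open in $M\setminus\mathrm{supp}(C^*)$; the key point is at a $0$-cell $v\notin\mathrm{supp}(C^*)$, where $v$ has no incident edge of $C^*$, so from $\partial D=C^*$ one reads off that the membership in $D$ is constant as one cycles through the surrounding $2$-cells. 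This exhibits $M\setminus\mathrm{supp}(C^*)=\tilde U_1\sqcup\tilde U_2$ as a disjoint union of two non-empty open sets.

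Finally, since $\mathrm{supp}(C^*)\subset\bigcup_{i\in I}\alpha_i\subset\bigcup_i\alpha_i$, we have $M\setminus\bigcup_i\alpha_i\subset\tilde U_1\sqcup\tilde U_2$, and the $1$-dimensional leftover $\bigcup_i\alpha_i\setminus\mathrm{supp}(C^*)$ cannot cover an open $2$-cell, so $\tilde U_k\setminus\bigcup_i\alpha_i$ contains the interior of some $2$-cell in $F_k$ and is non-empty. Hence $M\setminus\bigcup_i\alpha_i$ is disconnected. The main technical obstacle is setting up the CW structure so that the different curves interact cleanly and the edges $e_i$ exist — a routine but fiddly subdivision argument, assuming the $\alpha_i$ can be triangulated (which is the standard PL setting in this context, or can be obtained by a small perturbation).
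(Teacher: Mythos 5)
Your proof is correct, but it runs in the opposite direction from the paper's. The paper argues by contradiction: assuming the complement connected, it uses that connectedness to close up, for each $i$, a simple closed curve $\beta_i$ through the disk $D_i$ around a point of $\alpha_i\setminus\bigcup_{j\neq i}\alpha_j$, obtaining $\#(\alpha_j,\beta_i)=\delta_{ij}$ and hence $2g+1$ linearly independent functionals $\#(\cdot,\beta_i)$ on $H_1(M,\mathbb{R})$ --- impossible since that space has dimension $2g$. You instead argue directly: the $2g+1$ classes are dependent in $H_1(M;\mathbb{Z}/2)$, so some non-empty subfamily sums to a non-zero bounding cycle $C^*=\partial D$, and the standard cellular separation argument (consistency of membership in $D$ across edges and vertices not in $\mathrm{supp}(C^*)$) splits $M\setminus\mathrm{supp}(C^*)$, hence $M\setminus\bigcup_i\alpha_i$, into two non-empty open pieces. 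Your route buys a little more: it works with $\mathbb{Z}/2$ coefficients (so orientability is not actually needed, provided $2g$ is replaced by $\dim H_1(M;\mathbb{Z}/2)$), and it identifies a specific subfamily $\{\alpha_i\}_{i\in I}$ that already disconnects $M$; the price is the CW/PL bookkeeping, which the paper avoids by working with transverse dual curves. Two small points to tighten: (i) $C_i$ should be the sum of edges weighted by the mod-2 multiplicity with which $\alpha_i$ traverses them (the unweighted sum of edges in the image need not be a cycle nor represent $[\alpha_i]$ when $\alpha_i$ is not simple); this is harmless for $e_{i_0}$, which lies where $\alpha_{i_0}$ is locally a single embedded arc; (ii) both your proof and the paper's tacitly assume the curves are tame enough (locally flat/triangulable, locally embedded at the chosen points $x_i$) --- the paper needs this for ``the two sides of $\alpha_i$ in $D_i$'' and for $\#(\alpha_i,\beta_i)=1$, you need it for the adapted CW structure --- and in the intended application (arcs of $C^1$ invariant manifolds) this holds, but a perturbation of the $\alpha_i$ is not an option since it would change the set whose complement is claimed to be disconnected.
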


\begin{proof}
Assume by contradiction that $M \setminus \underset{0 \leq i \leq 2g}{\bigcup}\alpha_i $ is connected.
For $0\leq i \leq 2g$, let $x_i \in \alpha_i \setminus \underset{j \neq i}{\bigcup}\alpha_j$ and $D_i$ a small disk containing $x_i$ and disjoint from $\underset{j \neq i}{\bigcup}\alpha_j$.

Let $y_i$ and $z_i$ be points of $D_i$ on different sides of $\alpha_i$.
Let $\beta_{1i}$ be a small curve connecting $y_i$ and $z_i$ inside $D_i$.
Since $M \setminus \underset{0 \leq i \leq 2g}{\bigcup}\alpha_i $ is connected, there exists a simple curve $\beta_{2i}$ connecting $z_i$ to $y_i$ inside $M \setminus \underset{0 \leq i \leq 2g}{\bigcup}\alpha_i $.
If $\beta_i=\beta_{1i}*\beta_{2i}$, then $\beta_i$ is a simple closed curve with the following properties:

\begin{enumerate}
\item The oriented intersection number $\#(\alpha_i, \beta_i)=1$, if we choose suitable orientations for $M$, $\alpha_i$ and $\beta_i$.
\item $\#(\alpha_j, \beta_i)=0$ for $j \neq i$, since $\beta_i \cap \big(\underset{j \neq i}{\bigcup}\alpha_j \big) = \varnothing$ .
\end{enumerate}

Let $h_i:H_1(M,\mathbb{R}) \rightarrow \mathbb{R}$ be the homomorphism $h_i(\gamma)=\#(\gamma,\beta_i)$.
It follows from $(1)$ and $(2)$ that $h_0,...,h_{2g}$ are linearly independent elements of $\big(H_1(M,\mathbb{R})\big) \e *$, contradicting the fact that the dimension of $\big(H_1(M,\mathbb{R})\big) \e *$ is $2g$.
\end{proof}

Returning to the proof of item (2) of Proposition \ref{homoclinic 2}, we know that two branches of $p$, say $W_+ \e u$ and $W_+ \e s$, intersect at a homoclinic point $q$.

\begin{Lemma} \label{components}
Let $S$ be a connected compact orientable surface of genus $g$ and $f: S \rightarrow S$ an orientation preserving area preserving $C \e 1$ diffeomorphism of $S$. Let $p$ be a saddle fixed point of $f$ and $q \in  W_+ \e u \cup W_+ \e s$. Assume that the branches of $p$ are invariant and that they are not connections.

Let 
\begin{itemize}
    \item $\alpha_0 = W_+ \e u [q,f(q)] \cup W_+ \e s [q,f(q)]$. 
    \item $\alpha_i=f\e i({\alpha_0)}= W_+ \e u [f \e i (q),f \e {i+1} (q)] \cup W_+ \e s [f \e i (q),f \e {i+1} (q)]$, for $0 \leq i \leq 2g$.
    \item $\Omega_u =  W_+ \e u [q,f \e {2g+1} (q)]$ and $\Omega_s =  W_+ \e s [q,f \e {2g+1} (q)]$.
    \item $\Omega=\bigcup_{0 \leq i \leq 2g} \alpha_i=\Omega_u \cup \Omega_s$. 
\end{itemize}

Then we know the following.

\begin{enumerate}
    \item $\alpha_0$ is a closed curve and $S \setminus \Omega$ is disconnected.
    \item The frontier of every component of $S \setminus \Omega$ intersects both $\Omega_u$ and $\Omega_s$.
    \item If $A$ is the component of $S \setminus \Omega$ that contains $p$ and $B$ is any other component of $S \setminus \Omega$ then  $W_- \e u$ and $W_- \e s$ leave $A$ and intersect $B$.
    \item $W_- \e u \cap W_+ \e s \neq \varnothing$ and $W_- \e s \cap W_+ \e u  \neq \varnothing$
\end{enumerate}
    
\end{Lemma}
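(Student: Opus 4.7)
The proof treats the four items in order. Item (1) is topological, (2) is an area-preservation and dynamics argument, (3) is the main technical step, and (4) is a short consequence of (3).

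For (1), $\alpha_0$ is a closed curve since $W_+^u[q,f(q)]$ and $W_+^s[q,f(q)]$ are arcs sharing endpoints $q$ and $f(q)$; each $\alpha_i=f^i(\alpha_0)$ is closed as a homeomorphic image. To apply Lemma \ref{disconnecting the thesis} to the $2g+1$ curves $\alpha_0,\dots,\alpha_{2g}$, I pick $x_i$ in the interior of $W_+^u[f^i(q),f^{i+1}(q)]$ with $x_i\notin W_+^s$; such a point exists because otherwise an open subarc of $W_+^u$ would be contained in $W_+^s$, and by injectivity of the $C^1$-immersions of the invariant curves together with the dynamics, a branch of $W_+^u$ would coincide with a branch of $W_+^s$, i.e.\ be a connection, contradicting the hypothesis. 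Injectivity of the $W_+^u$-immersion excludes $x_i$ from all other $W_+^u[f^j(q),f^{j+1}(q)]$, and $x_i\notin W_+^s\supset\Omega_s$ excludes it from all $W_+^s[f^j(q),f^{j+1}(q)]$, so $x_i\in\alpha_i\setminus\bigcup_{j\neq i}\alpha_j$. Lemma \ref{disconnecting the thesis} then yields that $S\setminus\Omega$ is disconnected.

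For (2), suppose by contradiction that some component $B$ has $fr_S B\cap\Omega_s=\varnothing$, hence $fr_S B\subset\Omega_u$ (the case $fr_S B\subset\Omega_s$ is handled symmetrically with $f$ in place of $f^{-1}$). Since $f^{-1}$ contracts $W_+^u$ toward $p$, the endpoints $f^{-n}(q)$ and $f^{2g+1-n}(q)$ of $f^{-n}(\Omega_u)$ both converge to $p$, and the whole arc shrinks into any prescribed neighborhood of $p$. Hence $fr_S f^{-n}(B)=f^{-n}(fr_S B)$ lies in an arbitrarily small neighborhood of $p$ for $n$ large. Yet $\mu(f^{-n}(B))=\mu(B)>0$ by area preservation, so $f^{-n}(B)$ must contain a fixed positive amount of area outside any small disk around $p$; for $n$ large this forces $f^{-n}(B)$ to meet points of $W_+^s$ whose parameter falls inside the interval parameterizing the $f^{-1}$-expanded arc $f^{-n}(\Omega_s)$, contradicting $f^{-n}(B)\cap f^{-n}(\Omega_s)=\varnothing$.

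For (3), since $p\notin\Omega$ and $\Omega$ is compact, a small disk around $p$ lies in $A$, so the local branches of $W_-^u$ and $W_-^s$ at $p$ begin in $A$. Fix a component $B\neq A$. By (2), pick $y\in fr_S B\cap\Omega_s$ interior to an arc of $\Omega_s$ disjoint from $\Omega_u$; near $y$ the arc $W_+^s$ separates a small disk into two half-disks, one of which lies in $B$. By Proposition \ref{accumulation results}(2a), $y\in cl_S(W_-^u)$, so $W_-^u$ accumulates on $y$. The crux of the lemma --- and its main technical obstacle --- is to promote this accumulation into actual entry of $W_-^u$ into $B$. My approach is contradiction: if $W_-^u\cap B=\varnothing$, then $K:=cl_S(W_-^u)$ is compact, connected, $f$-invariant, contains $p$, has all fixed points non-degenerate and no connection branches, so Proposition \ref{if p is in K then the invariant manifolds are in K} gives $W^u\cup W^s\subset K\subset S\setminus B$. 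A careful local analysis near $y$ --- tracking which side of $W_+^s$ is accessible to sequences in $W^u\cup W^s$ and using that $W_-^s$ also starts in $A$ and, by the symmetric argument, would be barred from $B$ --- yields the contradiction. The same reasoning handles $W_-^s$.

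For (4), the connected curve $W_-^u$ has points in both $A$ (near $p$) and in any $B\neq A$ by (3), so it must cross $\Omega=\Omega_u\cup\Omega_s$. Since $W_-^u\cap W_+^u=\varnothing$ (distinct branches of the injectively immersed $W^u$), the crossing lies in $\Omega_s\subset W_+^s$, giving $W_-^u\cap W_+^s\neq\varnothing$. Symmetric reasoning gives $W_-^s\cap W_+^u\neq\varnothing$.
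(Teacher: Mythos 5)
Your items (1) and (4) follow the paper's argument essentially verbatim (the paper chooses a single point $\bar q\in W_+\e u\setminus W_+\e s$ and pushes it into each fundamental domain by iteration, which is the same device as your $x_i$), so those parts are fine. For item (2), however, you replace a one-line topological fact with a dynamical argument that is both heavier and incomplete: the paper simply observes that $\Omega_u$ and $\Omega_s$ are compact simple arcs, and a simple arc does not separate a surface, so the frontier of a component of $S\setminus\Omega$ cannot lie entirely in one of them. Your backward-iteration argument does yield $f\e{-n}(B)\supset S\setminus D_\epsilon$, but the final step --- that $f\e{-n}(B)$ must then meet $f\e{-n}(\Omega_s)$ --- is unjustified: you would need to know that the far tail of $W_+\e s$ is not contained in $D_\epsilon$, which you do not prove. (Your argument could be repaired more cheaply by noting $f\e{-n}(A)\subset D_\epsilon$ and comparing $\mu(A)$ with $\mu(D_\epsilon)$, but the topological proof makes all of this unnecessary.)

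The genuine gap is in item (3), which you yourself flag as "the crux" and then do not prove. After reducing to $W\e u\cup W\e s\subset S\setminus B$ via Proposition \ref{if p is in K then the invariant manifolds are in K}, you appeal to "a careful local analysis near $y$" to reach a contradiction, but no such purely local analysis can work: the fact that $W_-\e u$ accumulates on $y\in fr_S B\cap\Omega_s$ gives no control over which side of $W_+\e s$ the accumulation comes from, and it is perfectly consistent with all of that accumulation happening from the $A$-side. The missing ingredient is global and dynamical, and it is exactly what the paper supplies: by Poincar\'e recurrence $f\e n B\cap B\neq\varnothing$ for infinitely many $n$; since $fr_S B$ meets $W_+\e s$ (item (2)), forward iteration drags a frontier point of $f\e n B$ to $p\in A$, so $f\e n B\cap A\neq\varnothing$ for large $n$; connectedness of $f\e n B$ then forces $f\e n B\cap\Omega\neq\varnothing$, hence $B\cap f\e{-n}(\Omega)\neq\varnothing$, i.e.\ $B$ itself (not merely its frontier) contains points of $W_+\e u\cup W_+\e s$. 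Only then does the equality of the closures of the four branches (Proposition \ref{accumulation results}(2a)) put $W_-\e u$ and $W_-\e s$ inside the open set $B$. Without the recurrence step your proof of (3), and hence of (4), does not go through.
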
 

\begin{proof}

For $0 \leq i \leq 2g$, $\alpha_i=f\e i({\alpha_0)}= W_+ \e u [f \e i (q),f \e {i+1} (q)] \cup W_+ \e s [f \e i (q),f \e {i+1} (q)]$ are closed curves.

There exists a point $\bar{q} \in W_+ \e u$ such that $\bar{q} \notin W_+ \e s$.
We have that $o(\bar{q}) \subset W_+ \e u$ and $o(\bar{q}) \cap W_+ \e s = \varnothing$.
We may assume that $\bar{q} \in W_+ \e u (q,f(q))$.
For every $k \in \mathbb{Z}$ we have that $f \e k \bar{q} \in  W_+ \e u (f \e i (q),f \e {i+1} (q)) $ if and only if $k=i$.
Since $f \e k \bar{q} \notin W_+ \e s$, for every $k \in {0,...,2g}$ we have that $f \e k \bar{q} \in \alpha_k \setminus \underset{0 \leq j \leq 2g, j \neq k}{\bigcup}\alpha_j$, and $\alpha_0$,...,$\alpha_{2g}$ satisfy the hypothesis of Lemma \ref{disconnecting the thesis}.

It follows that $S \setminus \Omega$ is disconnected. 
Since $\Omega_u$ and $\Omega_s$ are simple arcs homeomorphic to a closed interval, we have that the frontier in $S$ of every component of $S \setminus \Omega$ has points in $\Omega_u \subset W_+ \e u$ and in $\Omega_s \subset W_+ \e s$.

Let $A$ be the component of $S \setminus \Omega$ that contains $p$ and its local branches, and $B$ any other component.  
By recurrence, there exist infinitely many values of $n$ such that $f \e n B \cap B \neq \varnothing$.

Since $fr_S B \cap W_+ \e s \neq \varnothing$, we have that $f \e n B \cap A \neq \varnothing$ for every $n$ large enough.
Hence $f \e n B \cap fr_S A \neq \varnothing$, implying that $B$ contains points of $W_+ \e u$ or $W_+ \e s$.

We have that the four branches have the same closure in $S$. 
This forces $W_- \e u$ and $W_- \e s$ to leave $A$ and enter $B$, by intersecting $fr_S A$ which is contained in $W_+ \e u \cup W_+ \e s$.
Therefore $W_- \e u$ must intersect $W_+ \e s$ and $W_- \e s$ must intersect $W_+ \e u$.
    
\end{proof}

It remains to show that  $W_- \e u \cap W_- \e s \neq \varnothing$. But this can be achieved by repeating the argument starting with a homoclinic intersection of $W_- \e u$ with $W_+ \e s$  or of $W_- \e s$ with $W_+ \e u$.




\section{Real analytic diffeomorphisms and the standard map}

The standard map is a one parameter family of area preserving diffeomorphism of the two dimensional torus $T \e 2 =\mathbb{R} \e 2/\mathbb{Z}\e 2$ given by 

\begin{center}
$f_{\mu}(x,y)=(x+y+\frac{\mu}{2\pi} \sin (2\pi x),y+\frac{\mu}{2\pi}\sin (2\pi x))$, $\mu \in \mathbb{R}$.
\end{center}

Since $\Phi(x,y) =(\frac{1}{2}-x,-y)$ is a conjugacy between $f_{\mu}$ and $f_{-\mu}$ and $f_0$ is just a linear twist, we only consider parameters $\mu>0$.

For $\mu \neq 0$ there are two fixed points, $p=(0,0)$ and $q =(\frac{1}{2},0)$.  
$p$ is always a saddle with positive eigenvalues and it is called \textit{the principal fixed point} of $f_\mu$.
$q$ is elliptic if $0 <\mu <4$ and a hyperbolic fixed point with negative eigenvalues if $\mu>4$.

In this section we are going to prove the following.

\begin{Theorem} \label{entropy for the standard map}
If $\mu \neq 4$ then the standard map satisfies the following:
\begin{enumerate}
    \item The four branches of $p=(0,0)$ have topologically transverse homoclinic points.
    \item $f_\mu$ has positive topological entropy.
    \item There exist periodic saddles with transverse homoclinic points.
\end{enumerate}
\end{Theorem}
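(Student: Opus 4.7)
The plan is to prove item~(1) first and then derive (2) and (3) as standard consequences. Item~(2) follows from (1) because a $C^1$ diffeomorphism of a surface with a topologically transverse homoclinic point contains a topological horseshoe and hence has positive topological entropy. Item~(3) follows from (2) by Katok's theorem: a $C^{1+\alpha}$ surface diffeomorphism of positive topological entropy contains hyperbolic horseshoes, whose periodic saddles carry transverse homoclinic intersections; the standard map is analytic so this applies. So the real content is item~(1).

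For item~(1), the first step is to obtain homoclinic intersections on each pair of adjacent branches of $p$ by invoking Sections~3 and~4. When $\mu \neq 4$, both fixed points $p=(0,0)$ and $q=(1/2,0)$ of $f_\mu$ are non-degenerate, and because $p$ has positive eigenvalues its four branches are invariant. By the main result of \cite{Ol2}, $p$ has homoclinic points, and in particular no branch of $p$ is a connection. These facts verify the hypotheses of Proposition~\ref{homoclinic 2}, and its item~(2) applied with $g=1$ (torus) yields that every pair of adjacent branches of $p$ meets. So each of the four branches already carries homoclinic intersections with both adjacent invariant branches.

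The critical remaining step is to upgrade at least one intersection on each branch to a \emph{topologically transverse} homoclinic intersection. The plan here is to combine the real-analyticity of $f_\mu$ with the Inverse Function Theorem in a neighbourhood of $p$. Real-analyticity implies that $W^u_p$ and $W^s_p$ are analytic immersed curves whose intersections are either coincident (excluded, since no branch is a connection) or discrete with a finite order of contact, and odd orders of contact give topologically transverse intersections. The IFT is used to produce a chart around $p$ in which the invariant manifolds become coordinate arcs, which reduces the transversality question to the analysis of zeros of an analytic return function describing how an arc of $W^s$ meets a transversal to a branch of $W^u$. Proposition~\ref{accumulation results}(1) supplies a sequence of such returning arcs accumulating at $p$ through each adjacent sector; combining this two-sided accumulation structure with area preservation forces infinitely many returning arcs to cross $W^u$ with odd order of contact, producing the required topologically transverse homoclinic points.

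The main obstacle is this final step: ruling out the scenario in which every homoclinic intersection along a given branch is tangential. Analyticity by itself is insufficient, since area-preserving analytic maps can admit individual homoclinic tangencies; what closes the argument is the combination of the linearised local structure near $p$ provided by the IFT with the two-sided accumulation of the branches and the area-preserving property, which together prevent the returning arcs from all lying on the same side of $W^u$. Once topological transversality is established for one pair of adjacent branches, the other three pairs follow either from the symmetries of $f_\mu$ or by repeating the argument, since the accumulation and non-connection hypotheses hold uniformly for all four branches.
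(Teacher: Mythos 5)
There are two genuine gaps. First, your inference ``$p$ has homoclinic points, and in particular no branch of $p$ is a connection'' is invalid: a connection between two branches of $p$ consists entirely of homoclinic points, so the existence of homoclinic points is perfectly compatible with the branches being connections (this is precisely the first alternative in the paper's Proposition following Section 5.1, where symmetry shows the invariant manifolds would form two connections). Ruling out connections is a genuinely separate step, and the paper does it with Ushiki's theorem (Proposition \ref{Ushiki}): the lift $F_\mu$ extends to a polynomial-type automorphism of $\mathbb{C}^2$, hence has no connections. Without this step you cannot apply Proposition \ref{homoclinic 2} (whose hypotheses exclude connections), and in the connection scenario the homoclinic points are maximally non-transverse, so items (1)--(3) would all fail by your route.

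Second, the transversality upgrade --- which is the heart of the proof --- is not actually carried out. You correctly identify the obstacle (a priori every intersection along a branch could be an even-order tangency), but the proposed resolution, that ``two-sided accumulation plus area preservation prevent the returning arcs from all lying on the same side of $W^u$,'' is a restatement of the goal rather than an argument: ``sides'' of a non-closed immersed curve such as $W^u_+$ are not globally defined, so there is nothing yet that forces a sign change. The paper's mechanism supplies exactly this missing structure: it builds the closed set $\Omega=\Omega_u\cup\Omega_s$ from $2g+1=3$ fundamental-domain loops $\alpha_i=f^i(\alpha_0)$, proves by a homological argument (Lemma \ref{disconnecting the thesis}) that $T^2\setminus\Omega$ is disconnected, shows via recurrence that the branch $W^s_-$ must pass from the component $A$ containing $p$ to another component (Lemma \ref{components}), and then takes the \emph{first exit parameter} $\bar s=\sup\{s\mid \beta([0,s])\subset A\cup\Omega_u\}$. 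Only then do analyticity and the IFT chart enter: the zeros of $\Psi_2\circ\beta$ are isolated, so $\beta$ lies in $A$ just before $\bar s$ and in $C$ just after, which is a topological crossing of $\Omega_u\subset W^u_+$ (Lemma \ref{crossing} and Proposition \ref{transverse intersection}). Your sketch has the right ingredients (analyticity, linearizing chart, accumulation) but omits the separating curve and the first-crossing point, without which the odd-order-of-contact claim is unsupported. Items (2) and (3), by contrast, are handled exactly as in the paper (Burns--Weiss horseshoe factor, then Katok).
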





\subsection{The principal point \texorpdfstring{$p$}{p} always has homoclinic points}

For $\mu \neq 4$, if $L$ is a branch of $p$ then $L$ is invariant and all fixed 
points of $f$ contained in $cl_{S}L$  are non degenerate.

We have that $f(-x,-y)=-f(x,y)$. Therefore the invariant manifolds of $p$ are symmetric with respect to $(0,0)$ and if $L$ is a connection then so is $-L$.

Since $q$ is either elliptic or a saddle whose branches have period two, there is no connection between $p$ and $q$. 
Therefore, if one of the branches of $p$ is a connection, then that connection is equal to two branches of $p$. By symmetry, the invariant manifolds of $p$ are formed of two connections of homoclinic points.

If no branch of $p$ is a connection then by Proposition \ref{homoclinic 2} the four branches of $p$ still have homoclinic points.
Therefore we have the following:

\begin{Proposition}
    For $\mu \neq 4$, either the four branches of $p$ form two connections of homoclinic points symmetric with respect $(0,0)$, or no branch of $p$ is a connection and all pairs of adjacent branches of $p$ intersect.
\end{Proposition}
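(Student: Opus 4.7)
First I would check the hypotheses required by the results of Sections 3 and 4. Each branch of $p$ is $f$-invariant because the eigenvalues of $df_p$ are positive real numbers, so $df_p$ preserves each half-eigenspace. The only fixed points of $f_\mu$ in $T^2$ are $p$ and $q=(1/2,0)$, and both are non-degenerate as soon as $\mu \neq 0,4$ (the eigenvalue $1$ of $df_q$ occurs precisely at $\mu=4$); in particular every fixed point in $cl_S(W_p^u \cup W_p^s)$ is non-degenerate, so Proposition \ref{homoclinic 2} applies. I would then exploit the involution $\sigma(x,y)=(-x,-y)$, which commutes with $f_\mu$ (as noted in the excerpt) and satisfies $d\sigma_p = -I$; consequently $\sigma$ sends each branch of $p$ to the opposite branch inside the same invariant manifold, and maps connections to connections.

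The argument then splits into two cases. In Case (A) some branch $L$ of $p$ is a connection. By definition $L$ must be contained in the stable or unstable manifold of a saddle fixed point, and I would rule out $q$ as the other endpoint: for $0<\mu<4$ the point $q$ is elliptic and carries no saddle-type invariant manifolds, while for $\mu>4$ its eigenvalues are negative so the branches of $q$ have minimal $f$-period two, which is incompatible with containing the $f$-invariant branch $L$. Hence $L$ is a homoclinic loop at $p$, coinciding as a subset of $T^2$ with one branch of $W_p^s$; applying $\sigma$ identifies the two remaining branches as a second homoclinic loop, $\sigma$-symmetric to the first, which yields the first alternative of the proposition. In Case (B) no branch of $p$ is a connection, and Proposition \ref{homoclinic 2} applied on $S=T^2$ (genus $g=1$) yields through its items (1) and (2) both the existence of a homoclinic point of $p$ and the intersection of every pair of adjacent branches, which is the second alternative.

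The main obstacle I anticipate is the clean exclusion of a $p$-to-$q$ heteroclinic connection when $\mu>4$, the regime where both fixed points are saddles. The resolution rests on the incompatibility between the $f$-invariance of the branches of $p$ (forced by positive eigenvalues) and the $f$-period-two character of the branches of $q$ (forced by negative eigenvalues): an $f$-invariant connected set cannot sit inside a branch that is exchanged with its opposite by a single iterate. Once this parity obstruction is recorded, the dichotomy and the symmetry statement both follow directly from the earlier material.
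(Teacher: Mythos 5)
Your proposal is correct and follows essentially the same route as the paper: the invariance and non-degeneracy checks, the symmetry $(x,y)\mapsto(-x,-y)$ to pair a homoclinic connection with a second symmetric one, the exclusion of a $p$-to-$q$ connection via the ellipticity of $q$ for $0<\mu<4$ or the period-two branches of $q$ for $\mu>4$, and the appeal to Proposition \ref{homoclinic 2} in the case with no connections. One small correction that does not affect the argument: $1$ is an eigenvalue of $df_q$ only at $\mu=0$ (at $\mu=4$ the double eigenvalue of $df_q$ is $-1$), so $q$ is non-degenerate in the paper's literal sense for all $\mu>0$; the parameter $\mu=4$ is excluded because there $q$ is neither elliptic nor a saddle.
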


Next, we are going to show that the first alternative can not happen.



\subsection{No branch of \texorpdfstring{$p$}{p} is a connection}

Now we present Ushiki's theorem.

Let $F_\mu:\mathbb{R}\e2\rightarrow\mathbb{R}\e2$,
$F_{\mu}(x,y)=(x+y+\frac{\mu}{2\pi} \sin (2\pi x),y+\frac{\mu}{2\pi}\sin (2\pi x))$, be the lifting of $f_\mu$ to the universal cover.
Since a connection $L$ of $p=(0,0)$ in $\mathbb{T}\e2$ connects $p$ to itself, a lifting of $L$ would be a connection $\widehat{L}$ connecting two points of $\mathbb{Z}\e2$. 

\begin{Proposition} \label{Ushiki}
    \textsc{(Ushiki's theorem)} If a real analytic diffeomorphism $F:\mathbb{R}\e2 \rightarrow \mathbb{R}\e2$ extends into an automorphism $H:\mathbb{C}\e2 \rightarrow \mathbb{C}\e2$ of the complex space of dimension two, then $F$ has no connections.
\end{Proposition}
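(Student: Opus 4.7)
The plan is to complexify the stable and unstable manifolds at the two saddles involved in a hypothetical connection, parametrize them by entire maps from $\mathbb{C}$, and force a contradiction at a saddle by comparing tangent directions of the resulting complex analytic curves.

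Suppose for contradiction that $L$ is a connection of $F$, simultaneously a branch of $W^{u}(p)$ and a branch of $W^{s}(p')$ for saddles $p,p'$ of $F$ (possibly equal). Since $H:\mathbb{C}^{2}\to\mathbb{C}^{2}$ is a biholomorphism, the saddles $p,p'$ remain hyperbolic fixed points of $H$, and classical holomorphic linearization at each saddle, together with iterated pullback under the functional equation, produces entire parametrizations
\[
\phi,\psi:\mathbb{C}\to\mathbb{C}^{2},\qquad \phi(0)=p,\ \psi(0)=p',
\]
with $\phi'(0)\in E^{u}(p)\setminus\{0\}$, $\psi'(0)\in E^{s}(p')\setminus\{0\}$, and
\[
H\circ\phi(z)=\phi(\lambda z),\qquad H\circ\psi(z)=\psi(\nu z),
\]
where $\lambda,\nu$ are the corresponding eigenvalues of $DH$ with $|\lambda|>1>|\nu|$; on the real line these restrict to parametrizations of the real invariant manifolds. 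As a key auxiliary observation, the identity theorem forces $\phi^{-1}(q)\subset\{0\}$ for every $H$-fixed point $q$: if $\phi(z_{0})=q$ with $z_{0}\ne 0$, the functional equation gives $\phi(\lambda^{n}z_{0})=H^{n}(q)=q$ for all $n\in\mathbb{Z}$, and the sequence $\lambda^{-n}z_{0}$ accumulates at $0$, so $\phi\equiv q$, a contradiction. The same holds for $\psi$.

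Next I would promote the real coincidence on $L$ to the global analytic coincidence $\phi(\mathbb{C})=\psi(\mathbb{C})$. At any interior (hence smooth) point $q$ of $L$, both images are locally smooth holomorphic curves whose complex tangent lines both contain the real tangent of $L$ at $q$; since a complex line in $\mathbb{C}^{2}$ is uniquely determined by any real vector it contains, the two complex tangent lines coincide, and the two germs agree at $q$. An open-and-closed argument on the connected smooth locus of $\phi(\mathbb{C})$ (the image under $\phi$ of $\mathbb{C}$ minus the discrete zero set of $\phi'$) then upgrades this local identity to the full equality of the two images as complex analytic subsets of $\mathbb{C}^{2}$.

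The contradiction now falls out at a saddle. In the heteroclinic case $p\ne p'$, one has $p\in\phi(\mathbb{C})=\psi(\mathbb{C})$, so $\psi(w)=p$ for some $w\in\mathbb{C}$; but the auxiliary observation applied to the fixed point $p$ gives either $w=0$ (forcing $p'=p$, contrary to assumption) or $\psi\equiv p$ (contradicting $\psi(0)=p'\ne p$). In the homoclinic case $p=p'$, the two germs of $\phi(\mathbb{C})$ and $\psi(\mathbb{C})$ at $p$ are smooth branches with respective tangent directions $E^{u}(p)$ and $E^{s}(p)$, which are distinct complex lines because $\lambda$ and $\nu$ are distinct eigenvalues of $DH_{p}$; this contradicts the local equality of the two germs at $p$. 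The main obstacle is the existence of the global entire parametrizations $\phi,\psi$ for a general biholomorphic automorphism of $\mathbb{C}^{2}$ and the verification that the local coincidence of germs on $L$ propagates across the discrete critical locus of the parametrizations all the way to the saddle; both steps depend crucially on the hypothesis that $H$ is a biholomorphism of the whole $\mathbb{C}^{2}$, so that the local holomorphic linearization at each saddle can be pulled back by iterates of $H$ and $H^{-1}$ to cover the entire complex invariant manifold.
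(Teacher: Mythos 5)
First, a remark on the comparison: the paper does not prove this proposition at all --- it quotes it as Theorem 1 of Ushiki's note and indicates only that it ``follows from basic classification results of complex one dimensional manifolds.'' Your construction of the entire parametrizations $\phi,\psi$ satisfying $H\circ\phi(z)=\phi(\lambda z)$ and $H\circ\psi(z)=\psi(\nu z)$ (valid precisely because $H$ and $H^{-1}$ are globally defined), the observation that $\phi^{-1}(q)\subset\{0\}$ for every fixed point $q$, and the coincidence of the two complexified germs at interior points of $L$ are all correct. The genuine gap is the step you describe as ``an open-and-closed argument on the connected smooth locus of $\phi(\mathbb{C})$'' yielding $\phi(\mathbb{C})=\psi(\mathbb{C})$, and the subsequent appeal to ``local equality of the two germs at $p$.'' The locus of points of $\phi(\mathbb{C})$ at which the germs of the two images agree is open in the curve but \emph{not} closed: $\psi(\mathbb{C})$ is only an injectively immersed, non-closed copy of $\mathbb{C}$ in $\mathbb{C}^2$, so if $\phi(z_n)=\psi(w_n)$ with $z_n\to z_*$, nothing prevents $w_n\to\infty$. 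This is not a technicality; it is exactly what happens at the saddle. Writing $\phi(t)=\psi(\sigma(t))$ for real $t$ on the connection, your own auxiliary observation (no finite accumulation value of $\sigma(t)$ other than $0$) combined with the fact that $\phi(t)$ lies off the local stable manifold for small $t>0$ forces $|\sigma(t)|\to\infty$ as $t\to 0^{+}$. Hence the coincidence locus runs off the end of $\psi(\mathbb{C})$ precisely at $p$: the membership $p\in\psi(\mathbb{C})$ needed in the heteroclinic case does not follow, and in the homoclinic case the germ of the \emph{set} $\psi(\mathbb{C})$ at $p$ is not the smooth stable branch (the curve also accumulates on $p$ from parameter values near infinity), so no tangent-line contradiction is available even granting set equality.

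The missing ingredient is the one the paper's citation points to. One must glue the two parameter planes $\mathbb{C}_{\phi}$ and $\mathbb{C}_{\psi}$ along the transition map $\sigma$ into an abstract one-dimensional complex manifold $X$ covered by two open sets each biholomorphic to $\mathbb{C}$; the classification of Riemann surfaces (the universal cover cannot be the disk since $X$ contains an open copy of $\mathbb{C}$; compact models are excluded by the maximum principle in $\mathbb{C}^2$; and $\mathbb{C}^{*}$ contains no open subset biholomorphic to $\mathbb{C}$ because injective entire functions are affine) forces $X=\mathbb{C}$, hence $\phi(\mathbb{C})=\psi(\mathbb{C})$ with $g=\psi^{-1}\circ\phi$ an affine automorphism of $\mathbb{C}$. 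The functional equations then give $g(\lambda z)=\nu g(z)$, so $\lambda=\nu$, contradicting $|\lambda|>1>|\nu|$. Without this (or an equivalent) globalization step, your argument does not close; the open-and-closed shortcut cannot replace it.
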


This is Theorem $1$ of \cite{Us}.
It follows from basic classification results of complex one dimensional manifolds.
Let $H_\mu:\mathbb{C}\e2\rightarrow\mathbb{C}\e2$,
\begin{center}
    $H_{\mu}(z_1,z_2)=(z_1+z_2+\frac{\mu}{2\pi} \sin (2\pi z_1),z_2+\frac{\mu}{2\pi}\sin (2\pi z_1))$
\end{center}
be an extension of $F_\mu$ to $\mathbb{C}\e2$.
If we write $(w_1,w_2)=H_{\mu}(z_1,z_2)$, then a simple calculation shows that
\[\begin{cases}
   {z_1}={w_1}-{w_2}\\
   z_2=w_2-\frac{\mu}{2\pi}{\sin\big(2\pi(w_1-w_2)\big)}.
\end{cases}\]
It follows that $H_\mu$ is an automorphism of $\mathbb{C}\e2$, and from Proposition \ref{Ushiki} we have that $F_\mu$ has no connections.

This proves the following.

\begin{Proposition}
    For $\mu \neq 4$, $f_\mu$ can not have connections.
\end{Proposition}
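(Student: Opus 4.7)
The plan is to obtain the proposition as a direct application of Ushiki's theorem (Proposition~\ref{Ushiki}) to the lift $F_\mu:\mathbb{R}^2\to\mathbb{R}^2$ of $f_\mu$.

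First I would reduce the statement to showing that $F_\mu$ itself has no connections. The only fixed points of $f_\mu$ are $p=(0,0)$ and $q=(1/2,0)$; for $\mu\neq 4$, $q$ is either elliptic or a saddle whose branches have period two under $f_\mu$, and hence the branches of $q$ are not individually $f_\mu$-invariant. Combined with the fact that branches of $p$ \emph{are} invariant (the eigenvalues at $p$ are positive), this rules out any connection with an endpoint at $q$. Consequently any hypothetical connection of $f_\mu$ must join $p$ to itself, and lifting it to the universal cover produces a connection $\widehat{L}$ of $F_\mu$ whose endpoints are two points of $\mathbb{Z}^2\subset\mathrm{Fix}(F_\mu)$.

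Next I would verify Ushiki's hypothesis by writing down an explicit biholomorphic extension. Since $\sin$ is entire, the defining formula of $F_\mu$ extends verbatim to the holomorphic map
\[H_\mu(z_1,z_2)=\Bigl(z_1+z_2+\tfrac{\mu}{2\pi}\sin(2\pi z_1),\ z_2+\tfrac{\mu}{2\pi}\sin(2\pi z_1)\Bigr).\]
Writing $(w_1,w_2)=H_\mu(z_1,z_2)$ and subtracting the two coordinates gives $z_1=w_1-w_2$ immediately, whence $z_2=w_2-\tfrac{\mu}{2\pi}\sin\!\bigl(2\pi(w_1-w_2)\bigr)$. Both formulas are entire in $(w_1,w_2)$, so $H_\mu$ is a biholomorphic automorphism of $\mathbb{C}^2$. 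Ushiki's theorem then forces $F_\mu$ to have no connections, contradicting the existence of $\widehat{L}$.

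There is essentially no real obstacle: the inversion of $H_\mu$ is elementary, owing to the upper-triangular structure of the map in $(z_1,z_2)$, and Ushiki's theorem is invoked as a black box whose proof relies on the classification of complex one-dimensional manifolds. The only spot requiring a touch of care is the opening reduction, which uses that no invariant branch of $p$ can terminate at $q$ in either the elliptic ($0<\mu<4$) or negative-eigenvalue-saddle ($\mu>4$) regime.
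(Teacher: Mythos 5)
Your proposal is correct and follows essentially the same route as the paper: reduce to the lift $F_\mu$ via the observation (already made in the preceding subsection) that $q$ cannot be an endpoint of a connection, then exhibit the explicit entire inverse of $H_\mu$ and invoke Ushiki's theorem. The inversion formula you give is exactly the one in the text, so there is nothing to add.
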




\subsection{All pairs of adjacent branches of \texorpdfstring{$p$}{p} intersect topologically transversely}




In this subsection we are going to prove the following.

\begin{Proposition} \label{transverse intersection}
Let $\mu \neq 4$. For every pair of adjacent branches $L \e u$, $L \e s$ of $p$ there exists $q \in L \e u \cap L \e s$ and a small open disk $D$ containing $q$ such that, if $\gamma_u$ and $\gamma_s$ are the components of $L \e u \cap D$ and $L \e s \cap D$ that contain $q$, respectively, then they satisfy the following:

    \begin{enumerate}
        \item $\gamma_u \cap \gamma_s = \{q\}$.
        \item $D \setminus \gamma_u$ and $D \setminus \gamma_s$ have two components.
        \item The components of $\gamma_u \setminus \{q\}$ are contained in different components of $D \setminus \gamma_s$.
        \item The components of $\gamma_s \setminus \{q\}$ are contained in different components of $D \setminus \gamma_u$.
    \end{enumerate}
\end{Proposition}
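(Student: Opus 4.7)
The plan is to start from a homoclinic point $q \in L^u \cap L^s$, whose existence is guaranteed by Proposition \ref{homoclinic 2} together with the preceding subsection showing that no branch of $p$ is a connection. The strategy is to verify conditions (1)--(4) either at $q$ itself or at a conveniently chosen iterate $f_\mu^n(q)$, exploiting the real analyticity of $f_\mu$ together with the Inverse Function Theorem applied in a neighborhood of $p$.

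At any intersection $q' \in L^u \cap L^s$, the analytic curves $L^u$ and $L^s$ either meet with linearly independent tangent vectors or with coincident tangent lines. In the transverse case, the Inverse Function Theorem yields local coordinates on a small disk $D$ around $q'$ in which $\gamma_u$ and $\gamma_s$ become orthogonal coordinate axes, so that conditions (1)--(4) hold immediately. In the tangential case, since the two curves are distinct (otherwise a branch would be a connection, excluded in the previous subsection), their analytic contact at $q'$ has some finite order $k \geq 1$. If $k$ is odd, the two arcs still cross topologically at $q'$ (the local normal form is $y=0$ versus $y = c\, x^k$ with $c \neq 0$), and for a suitably small $D$ conditions (1)--(4) are verified. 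The only remaining possibility to rule out is that every intersection in $L^u \cap L^s$ is a tangency of some even order $k = 2m \geq 2$.

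To exclude this, I will iterate forward and use the hyperbolic dynamics at $p$. Since the analytic contact order of two analytic curves is invariant under diffeomorphisms, every $q_n := f_\mu^n(q)$ would be a tangency of the same even order $2m$, and, as established in the previous sections, $q_n \to p$ along $L^s$. The Inverse Function Theorem furnishes linearizing coordinates $(x,y)$ on a neighborhood $V$ of $p$ in which $f_\mu(x,y) = (\lambda x, \lambda^{-1} y)$, with $W^s_{\mathrm{loc}} = \{x = 0\}$ and $W^u_{\mathrm{loc}} = \{y = 0\}$. For $n$ sufficiently large, $q_n = (0, y_n)$ lies on $W^s_{\mathrm{loc}}$, and the arc of $L^u$ through $q_n$ is an analytic graph $x = a_n(y - y_n)^{2m} + O((y - y_n)^{2m+1})$. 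A direct substitution under the linearized $f_\mu$ gives the recursion $a_{n+1} = \lambda^{2m+1} a_n$, so $a_n$ grows like $\lambda^{n(2m+1)}$ while $y_n \to 0$ like $\lambda^{-n}$, and the arcs of $L^u$ through the successive $q_n$ become exponentially sharply curved near $p$.

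The main obstacle will be to convert this uncontrolled growth into a rigorous contradiction. My plan for this final step is to combine the recursion $a_{n+1} = \lambda^{2m+1} a_n$ with the accumulation and recurrence properties of $L^u$ developed in Section 4: the arcs of $L^u$ through consecutive $q_n$ are $f_\mu$-iterates of one another, so a single fixed small piece of $L^u$ around $q_0$ is stretched under $f_\mu^n$ to an arc whose analytic jets at $q_n$ are prescribed by the recursion; uniform analytic bounds on $L^u$ over any compact set then collide with the exponential blow-up of $a_n$. Should this direct route prove too delicate, the fallback is to exploit the Ushiki framework of the previous subsection and the complex extension $H_\mu:\mathbb{C}^2 \to \mathbb{C}^2$ to analyze the real intersections $q_n$ inside the complex invariant manifolds, where accumulation of positive-multiplicity intersections at the hyperbolic fixed point provides a second route to the contradiction.
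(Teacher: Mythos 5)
Your reduction (a transverse intersection or an odd--order analytic tangency already gives a topological crossing, so one only has to exclude the case where every point of $L^u\cap L^s$ is an even--order tangency) is sensible, but the argument you give to exclude that case has two genuine gaps. First, the Inverse Function Theorem does \emph{not} furnish coordinates in which $f_\mu(x,y)=(\lambda x,\lambda^{-1}y)$; it only lets you straighten the local invariant manifolds to the coordinate axes (this is exactly what the paper does with $\Phi(t,s)=\alpha(t)+\beta(s)$ and its analytic inverse $\Psi$). Analytic linearization at a saddle is a much deeper normal--form theorem, and for an area--preserving analytic map the Birkhoff--Moser normal form is $(x,y)\mapsto\big(U(xy)x,\,U(xy)^{-1}y\big)$, not linear; so the recursion $a_{n+1}=\lambda^{2m+1}a_n$ is unjustified as stated. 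Second, and more seriously, even granting that recursion there is no contradiction to be had: the arcs of $L^u$ through the successive $q_n=f_\mu^n(q)$ are \emph{different} pieces of a non-compact injectively immersed curve, and there are no ``uniform analytic bounds on $L^u$ over a compact set'' --- exponentially sharp folding of $W^u$ as it sweeps past $W^s_{\mathrm{loc}}$ is precisely what happens in a horseshoe and is perfectly consistent with every hypothesis. You flag this final step yourself as the main obstacle, and neither the jet-growth route nor the vague appeal to the complexification closes it.

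The idea your proposal is missing is a \emph{global topological separation argument}, which is how the paper proceeds. Using Lemma \ref{components}, the compact set $\Omega=W_+^u[q,f^3q]\cup W_+^s[q,f^3q]$ disconnects $T^2$; the branch $W_-^s=\beta(0,+\infty)$ starts in the component $A$ containing $p$ and must enter a different component $C$, and it can only cross $\Omega$ through $\Omega_u$. Setting $\bar s=\sup\{s\ge 0:\beta([0,s])\subset A\cup\Omega_u\}$, one straightens the invariant manifolds by $\Psi$ so that ``$\beta(s)\in\Omega_u$'' becomes ``$\Psi_2(\beta(s))=0$'' for a real analytic function of one variable; since $\beta(s_n)\in C$ for some $s_n\downarrow\bar s$, this function is not identically zero near $\bar s$, its zeros are isolated, and hence $\beta$ passes from $A$ to $C$ exactly at $q=\beta(\bar s)$ --- which is the topological crossing asserted in conditions $(1)$--$(4)$. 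Real analyticity is used only for the isolated--zeros step, not to classify tangency orders or to linearize the dynamics. I would recommend abandoning the jet-growth contradiction and reorganizing your argument around this first-crossing scheme.
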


We know from item $(2)$ of Proposition \ref{homoclinic 2} that all pairs of adjacent branches of $p$ intersect. 

We are going to work the case where we start from $q\in W_+ \e u \cap W_+ \e s$, and then show that the pairs ($W_- \e u$ , $W_+ \e s$) and ($W_- \e s$ , $W_+ \e u$) have a topologically transverse intersection. 
Then, repeating the argument starting with $q\in W_- \e u \cap W_+ \e s$, it is possible to show that the other two pairs ($W_+ \e s$, $W_+ \e u$) and ($W_- \e u$ , $W_- \e s$) have a topologically transverse intersection. 

We are only going to prove that ($W_- \e s$, $W_+ \e u$) have a topologically transverse intersection. 
The argument for ($W_- \e u$, $W_+ \e s$) is analogous. 

From now on we refer to sets and results of Lemma \ref{components}.

Let $\Omega_u = W_+ \e u [q, f \e 3 q]$, $\Omega_s = W_+ \e s [q, f \e 3 q]$ and $\Omega = \Omega_u \cup \Omega_s$. 

Since $p$ has no connections, we have that $T \e 2 \setminus \Omega$ is disconnected.
The frontier in $S$ of every component of $S \setminus \Omega$ is contained in $\Omega$ and intersects both $\Omega_u \subset W_+ \e u$ and $\Omega_s \subset W_+ \e s$.

Let $A$ be the component of $T \e 2 \setminus \Omega$ that contains $p$.
The local branch of $W_- \e s$ must leave $A$, intersect $\Omega$ (at at least one point) and then enter a different component of $T \e 2 - \Omega$. 
Let $C$ be the union of all components of $T \e 2 \setminus \Omega$ different from $A$.

We denote the eigenvalues of $df_p$ by $\lambda>1$ and $\lambda \e {-1}$.
There exist real analytic immersions \[\alpha:\mathbb{R} \rightarrow W_p \e u \subset T \e 2 \hspace{2mm} and \hspace{2mm}\beta:\mathbb{R} \rightarrow W_p \e s \subset T \e 2\] such that
    \begin{enumerate}
            \vspace{2mm}
        \item $f\big(\alpha(t)\big)=\alpha(\lambda t) \hspace{1.5mm} \forall t \in \mathbb{R}$, $\alpha(0)=p$ and $df_p \big(\alpha \e \prime (0)\big)= \lambda  \thinspace \alpha \e \prime (0) $.
           \vspace{2mm}
        \item $f\big(\beta(s)\big)=\beta(\lambda \e {-1} s) \hspace{1.5mm} \forall s \in \mathbb{R}$, $\beta(0)=p$ and $df_p \big(\beta \e \prime (0)\big)= \lambda  \e {-1} \thinspace \beta \e \prime (0) $.
           \vspace{2mm}
    \end{enumerate}  

We may assume that $\beta(0,+\infty)=W_- \e s$.
Note that $\beta(0,+\infty)$ can not intersect $\Omega_s \subset W_+ \e s$.
Since all pairs of adjacent branches of $p$ intersect, we have that $\beta(0,+\infty)$ intersects only $\Omega_u$.

For every positive and sufficiently small $s$, we have that $\beta\big([0,s]\big) \subset A$.
We also know that $\beta(s) \in C$ for some values of $s>0$.
Therefore the set 
\begin{center}
    $E=\{s \geq 0 \mid \beta\big([0,s]\big) \subset A \cup \Omega_u\}$
\end{center}
is not empty and bounded above. 
Let $\bar{s} := sup E$.
It easy to see that $E$ is the interval $[0,\bar{s}]$ and that $\beta(\bar{s}) \in \Omega_u$.

We will need the following.

\begin{Lemma} \label{crossing}
There exists $\varepsilon>0$ such that 
$\beta(\overline{s}-\varepsilon,\overline{s}) \subset A$ and 
$\beta(\overline{s},\overline{s}+\varepsilon) \subset C$.
\end{Lemma}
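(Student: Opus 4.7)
The plan is to show that $\beta\e{-1}(\Omega_u)$ is a discrete closed subset of $[0,+\infty)$, and then to conclude by a maximality-plus-connectedness argument using the decomposition $T\e 2\setminus\Omega=A\sqcup C$.

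The discreteness is the key step, and it rests on the real analyticity of $f_\mu$. Suppose, for contradiction, that $\beta(s_n)\in\Omega_u$ for some sequence $s_n\to\bar{s}$ with $s_n\neq\bar{s}$. Writing $\Omega_u=\alpha(I)$ for a compact interval $I$, pick $t_n\in I$ with $\alpha(t_n)=\beta(s_n)$; passing to a subsequence, $t_n\to t_\star\in I$, so $\beta(\bar{s})=\alpha(t_\star)$. If $\alpha'(t_\star)$ and $\beta'(\bar{s})$ are linearly independent, the inverse function theorem applied to $(s,t)\mapsto\alpha(t)-\beta(s)$ already makes the intersection isolated in the parameter plane, contradicting the existence of the sequence. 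Otherwise the two analytic immersions share a tangent line at the common point; writing both as real analytic graphs over that line and invoking the identity principle for one-variable analytic functions forces $\alpha$ and $\beta$ to parameterize a common arc $L\subset W_+\e u\cap W_-\e s$. Such a coincidence arc, however, cannot exist: propagating the analytic identity by backward iteration (which contracts $L$ toward $p$ along the $\alpha$-parameterization) would produce a nontrivial arc of $W_p\e s$ sitting on the local unstable manifold of $p$, ultimately forcing $T_p W\e u = T_p W\e s$ and contradicting that these are the distinct eigenspaces of $df_p$.

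With discreteness in hand, choose $\varepsilon>0$ so that $\beta\e{-1}(\Omega_u)\cap[\bar{s}-\varepsilon,\bar{s}+\varepsilon]=\{\bar{s}\}$. Since $W_-\e s$ and $W_+\e s$ are disjoint branches of the stable manifold of $p$, we have $\beta((0,+\infty))\cap\Omega_s=\varnothing$; hence both $\beta(\bar{s}-\varepsilon,\bar{s})$ and $\beta(\bar{s},\bar{s}+\varepsilon)$ are connected subsets of $T\e 2\setminus\Omega=A\sqcup C$, each lying entirely in $A$ or entirely in $C$. The left interval is contained in $\beta([0,\bar{s}])\subset A\cup\Omega_u$ and disjoint from $\Omega_u$, so it sits in $A$. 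If the right interval were contained in $A$, then $\beta([0,\bar{s}+\varepsilon/2])\subset A\cup\Omega_u$ would give $\bar{s}+\varepsilon/2\in E$, contradicting $\bar{s}=\sup E$; hence it lies in $C$.

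The main obstacle is the discreteness step, where real analyticity of $f_\mu$ is essential via the identity principle combined with the hyperbolic structure at $p$ to rule out a coincidence arc of $W_p\e u$ and $W_p\e s$. Once discreteness is established, the conclusion follows from the definition of $\bar{s}$ and the disjoint-open-sets decomposition.
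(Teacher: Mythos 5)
Your skeleton matches the paper's: both proofs hinge on real analyticity forcing the parameters of $\beta^{-1}(\Omega_u)$ near $\bar{s}$ to be isolated, and your final connectedness-plus-$\sup E$ endgame is exactly the paper's. The transverse sub-case (inverse function theorem applied to $(s,t)\mapsto \alpha(t)-\beta(s)$) is fine. The gap is in the tangent sub-case, where you must kill the coincidence arc $L\subset W_+^u\cap W_-^s$ produced by the identity principle. Your claimed contradiction --- that backward iteration of $L$ ``ultimately forces $T_pW^u=T_pW^s$'' --- does not follow. The arcs $f^{-n}(L)$ do lie in $W^u_{loc}$ and accumulate at $p$, but as subsets of the stable branch they are $\beta(\lambda^n[s_1,s_2])$ with $\lambda^n s_i\to+\infty$: they carry information about the tangent directions of $W^s$ far out along the branch (which indeed converge to the unstable direction --- this is just the inclination lemma and already happens along any transverse homoclinic orbit, with no contradiction), not about $T_pW^s=\mathbb{R}\,\beta'(0)$. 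So no contradiction with the hyperbolic splitting at $p$ is reached, and ruling out a common arc of $W_+^u$ and $W_-^s$ by pure invariant-manifold structure is genuinely delicate (one would have to propagate the analytic coincidence to a full connection and invoke Ushiki, with several cases).

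The paper avoids this entirely, and you can too: you do not need discreteness of $\beta^{-1}(\Omega_u)$ as an abstract fact, only at $\bar{s}$, and there the definition $\bar{s}=\sup E$ does the work. Since $E=[0,\bar{s}]$, there are $s_n\downarrow\bar{s}$ with $\beta(s_n)\in C$, hence $\beta(s_n)\notin\Omega_u$; this is incompatible with a coincidence arc, because such an arc would force $\beta([\bar{s},\bar{s}+\eta])\subset\Omega_u$ for some $\eta>0$ (note $t_\star$ is interior to the parameter interval of $\Omega_u$, since $\beta(\bar{s})\in W_-^s$ cannot equal the endpoints $q,f^3q\in W_+^s$), whence $\bar{s}+\eta\in E$, a contradiction. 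The paper packages this as a dichotomy for the single analytic function $\Psi_2\circ\beta$ in the chart $\Phi(t,s)=\alpha(t)+\beta(s)$: near $\bar{s}$ it either vanishes identically (excluded exactly by the $\sup E$ sequence above) or has $\bar{s}$ as an isolated zero. With that replacement your argument closes; as written, the key exclusion step is unjustified.
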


\begin{proof}

We are going to work with the lift to $\mathbb{R} \e 2$ of various objects we are dealing with. We will use the same notation for an object downstairs and for its lift to the universal cover.

Firstly, we will make a real analytical change of coordinates at $p=(0,0)$, so that the local invariant manifolds also become the coordinate axes.
Consider the mapping $\Phi:\mathbb{R} \e 2 \rightarrow \mathbb{R} \e 2$, given by $\Phi(t,s) = \alpha(t) + \beta(s)$.
We have that
\[\Phi \e \prime (0,0) =
 \begin{bmatrix}
 \alpha_1 \e \prime (0) & \beta_1 \e \prime (0) \\
 \alpha_2 \e \prime (0) & \beta_2 \e \prime (0) \\
 \end{bmatrix},
\]
and therefore $det \hspace{0.5mm} \Phi \e \prime (0,0) \neq 0$, since $\alpha \e \prime (0)$ and $\beta \e \prime (0)$  are eigenvectors of $df_p$ corresponding to different eigenvalues.

There exist neighborhoods $\mathcal{U}$ and $\mathcal{V}$ of $(0,0)$ such that the restriction $\Phi: \mathcal{U} \rightarrow \mathcal{V}$ is a real-analytic diffeomorphism.
Its inverse is also real-analytic diffeomorphism, and will be denoted by $\Psi:\mathcal{V} \rightarrow \mathcal{U}$.
We will also assume that $\mathcal{U}$ is the ball $B_\delta$ with center at $p$ and radius $\delta$, where $\delta$ is small.

The local invariant manifolds defined by $\mathcal{V}$, $W_{loc} \e u$ and $W_{loc} \e s$, are the components of $W \e u \cap \mathcal{V}$ and $W \e s  \cap \mathcal{V}$, respectively, that contain $p=(0,0)$.

We have that $\Psi\big(\alpha(t)+\beta(s)\big)=(t,s), \thinspace \forall \thinspace (t,s) \in B_\delta$.
Therefore, for $t,s \in (-\delta,\delta)$ we have that
\begin{center}
    $\Psi$\big($\alpha(t)\big)=(t,0)$ \hspace{2mm} and \hspace{2mm} $\Psi$\big($\beta(s)\big)=(0,s)$,
\end{center} 
meaning that $\alpha$ and $\beta$ could be thought as coordinate curves in the original coordinates. 
If $\Psi_i$, $i=1,2$, are the coordinate functions of $\Psi$ then 
\begin{center}
$W_{loc} \e u = \{(x,y) \in \mathcal{V} \thinspace | \thinspace \Psi_2(x,y)=0\}$ and $W_{loc} \e s = \{(x,y) \in \mathcal{V} \thinspace | \thinspace \Psi_1(x,y)=0\}$.
\end{center}

We may assume that $f \e i q \in W_{+loc} \e u$ for $0 \leq i \leq 3$ and therefore $\Omega_u \subset W_{+loc} \e u$.
It follows that $\Psi(\Omega_u) \subset \{(t,s) \in B_\delta \mid s=0\}$.

We have that $(\Psi_2 \circ \beta)(\bar{s})=0$.
Therefore there exists $\delta > 0$ such that $\beta(\bar{s}-\delta,\bar{s}+\delta) \subset \mathcal{V}$. 
It follows that the composition $\Psi_2 \circ \beta:(\bar{s}-\delta,\bar{s}+\delta) \rightarrow \mathbb{R}$ is a well defined real analytic function.

For $s \in (\bar{s}-\delta,\bar{s}+\delta)$ we have that $(\Psi_2 \circ \beta)(s)=0$ if and only if $\beta(s) \in \Omega_u$.

Since the zeroes of a non constant real analytic function of one variable are isolated, we know that there exists $\varepsilon \in (0,\delta)$ such that $\Psi_2 \circ \beta:(\bar{s}-\varepsilon,\bar{s}+\varepsilon) \rightarrow \mathbb{R}$ satisfies exactly one of the following conditions.

\begin{enumerate}
    \item $(\Psi_2 \circ \beta)(s)=0$ for every $s \in (\bar{s}-\varepsilon,\bar{s}+\varepsilon)$.
    \item If $s \in (\bar{s}-\varepsilon,\bar{s}+\varepsilon)$ then $(\Psi_2 \circ \beta)(s)=0$ if and only if $s=\bar{s}$.
\end{enumerate}

Since $\bar{s}=sup{E}$, we know that for every sufficiently large $n$ there exists $s_n$ such that $\bar{s}<s_n<\bar{s}+\frac{1}{n}$ and $\beta(s_n) \in C$.
This implies that $\beta(s_n) \notin \Omega_u$ and $(\Psi_2 \circ \beta)(s_n) \neq 0$ for every $n$.
From this we conclude that condition $(1)$ can not happen.

From condition $(2)$ we have that if $s \in (\bar{s}-\varepsilon,\bar{s})$ or $s \in (\bar{s},\bar{s}+\varepsilon)$, then $\Psi_2(\beta(s)) \neq 0$, which implies that $\beta(s) \notin \Omega_u$.
If $s \in (\bar{s}-\varepsilon,\bar{s})$ then $\beta(s) \in A$ and if $s \in (\bar{s},\bar{s}+\varepsilon)$ then $\beta(s) \in C$. 

This proves Lemma \ref{crossing}.
\end{proof}

Let $D$ be a disk that contains $q:=\beta(\bar{s})$ and is small enough so that each of the arcs $\beta(\bar{s}-\varepsilon,\bar{s}+\varepsilon)$ and $\Omega_u$ separate $D$ into two components.
Let $\gamma_s$ and $\gamma_u$ be the components of $D \cap \beta(\bar{s}-\varepsilon,\bar{s}+\varepsilon)$ and $D \cap \Omega_u$, respectively, which contain $q$.

Then it is easy to verify that $D$, $\gamma_s$ and $\gamma_u$ satisfy the conditions $(1)$ to $(4)$ of Proposition \ref{transverse intersection}.

This proves item $(1)$ of Theorem \ref{entropy for the standard map}.



\subsection{Positive topological entropy and transverse homoclinic points}

Topologically transverse homoclinic points force the dynamics to have positive topological entropy.

Let $f$ be a $C \e 1$ diffeomorphism of a surface $S$.

\begin{Proposition}
Let $p$ be a hyperbolic periodic point of $f$ of saddle type. 
Assume that two branches of $p$ have a topologically transverse homoclinic point. 
    
Then some power of $f$ has the full shift on two symbols as a topological factor, i.e., there is a subset $\Lambda$ that is invariant under $f \e n$ for some $n>1$, and a continuous map $\pi:\Lambda \rightarrow \Sigma$, that is onto but not necessarily injective, such that $\pi \circ f \e n = \sigma \circ \pi$, where $ \sigma: \Sigma \rightarrow \Sigma$ is the full shift on two symbols.
\end{Proposition}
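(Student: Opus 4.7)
The plan is to build a topological horseshoe out of the topologically transverse homoclinic intersection, following the Birkhoff–Smale scheme but using a topological $\lambda$-lemma (as in the work of Rayskin or Burns–Weiss) in place of the smooth one. First, by replacing $f$ with a suitable power, I may assume $p$ is fixed and the two branches $L^u \subset W^u_p$ and $L^s \subset W^s_p$ containing the homoclinic point $q$ are both invariant. Since $q \in L^u \cap L^s$, we have $f^k(q) \to p$ as $k \to +\infty$ along $L^s$ and $f^{-k}(q) \to p$ as $k \to +\infty$ along $L^u$.

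Next, I would pick continuous coordinates in a neighborhood $V$ of $p$ in which $f$ has the linear saddle form $(x,y) \mapsto (\lambda x, \lambda^{-1} y)$, so that $W^u_{\mathrm{loc}}$ and $W^s_{\mathrm{loc}}$ are coordinate axes. Choose a small topological rectangle $R \subset V$ with two "unstable" sides contained in $W^u_{\mathrm{loc}}$ and two "stable" sides contained in $W^s_{\mathrm{loc}}$, and take $N$ large enough that $f^{-N}(q)$ lies in the interior of a stable side of $R$ and $f^{N}(q)$ lies in the interior of an unstable side of $R$. Let $D$ be the disk around $q$ provided by Proposition \ref{transverse intersection}, with the two crossing arcs $\gamma_u \subset L^u$ and $\gamma_s \subset L^s$.

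Now I would extract two horizontal sub-rectangles $H_0, H_1 \subset R$ such that $f^{2N}(H_0)$ and $f^{2N}(H_1)$ are disjoint vertical strips across $R$. To produce $H_0$, take a thin horizontal strip in $R$ near the unstable axis but away from $f^{-N}(q)$; under $f^{2N}$ the hyperbolic form of $f$ near $p$ stretches it across $R$, exiting through an unstable side, giving the "trivial" return. To produce $H_1$, take a thin horizontal strip in $R$ \emph{through} $f^{-N}(q)$; after $N$ iterates it is carried to a thin neighborhood of $q$ stretched along $L^u$, i.e.\ across the arc $\gamma_u$ inside $D$. The topological transversality ($\gamma_u$ crosses $\gamma_s$ from one side of $\gamma_s$ to the other) is exactly what ensures that this thin strip then cuts transversely across $\gamma_s$, so that a further $N$ iterates carry it, through the local hyperbolic dynamics at $p$, to a second vertical strip crossing $R$ and disjoint from $f^{2N}(H_0)$. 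This is the horseshoe configuration for $g := f^{2N}$.

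Once this Markov-like structure is in place, I would finish in the standard way: set $\Lambda := \bigcap_{j \in \mathbb{Z}} g^{-j}(H_0 \cup H_1)$ and define $\pi: \Lambda \to \Sigma = \{0,1\}^{\mathbb{Z}}$ by $\pi(x)_j = i$ whenever $g^j(x) \in H_i$. Continuity follows since $H_0, H_1$ are disjoint compacta; surjectivity follows from the nested-intersection property of horizontal-into-vertical strips (every finite word is realized by a non-empty compact subrectangle); and $\pi \circ g = \sigma \circ \pi$ by construction. The main obstacle is the middle step: turning the purely topological transversality at $q$ into a genuine horseshoe crossing. In the $C^1$ case this is immediate from the differentiable $\lambda$-lemma, but here one must argue topologically, either by invoking a topological $\lambda$-lemma or by directly pushing forward the crossing structure in $D$ through the local hyperbolic coordinates at $p$, using only that $\gamma_u$ truly separates the two components of $D \setminus \gamma_s$ near $q$.
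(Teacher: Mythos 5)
Your outline is the same one the paper itself uses: the paper does not reprove this proposition but refers to Theorems 2.1, 2.4 and Lemma 2.8 of Burns--Weiss, whose content is precisely ``a rectangle and strips stretched across it.'' The genuine gap in your proposal is the step you yourself flag as the main obstacle: the claim that, because $\gamma_u$ topologically crosses $\gamma_s$, the strip obtained by pushing $H_1$ forward $N$ times ``cuts transversely across $\gamma_s$'' and is then carried by $N$ further iterates to a full vertical strip across $R$, disjoint from the trivial return. This is exactly what must be proved, and it does not follow from the $C^1$ inclination ($\lambda$-)lemma, because the hypothesis is only a topological crossing: the two branches may be $C^1$-tangent at $q$ to infinite order (the paper explicitly warns that ``the topological crossing could be of infinite order''). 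In that situation a thin two-dimensional neighborhood of $\gamma_u$ is not automatically carried onto a sub-rectangle that spans $R$ in the Markov sense; what rescues the construction is an intersection-number (degree) argument showing that the iterated strip crosses the rectangle with nonzero index, which then forces the existence of a compact connected component joining the two opposite sides. That degree argument is the content of Burns--Weiss Lemma 2.8; your proposal only names the needed tool (``a topological $\lambda$-lemma'') without carrying it out, so the proof is incomplete at its central point. If you intend the citation to Burns--Weiss to stand in for this step, then your argument coincides with the paper's; if you intend to prove it yourself, the index computation still has to be supplied.

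There are also smaller geometric slips to repair before the sketch can be made precise: a rectangle cannot have two opposite sides contained in $W^u_{\mathrm{loc}}$ and two contained in $W^s_{\mathrm{loc}}$; the orientation of the homoclinic orbit is reversed ($f^{-N}(q)$ lies on $W^u_{\mathrm{loc}}$ and $f^{N}(q)$ on $W^s_{\mathrm{loc}}$, not the other way around); and since $f^{-N}(q)\in W^u_{\mathrm{loc}}$, the strip $H_1$ ``through $f^{-N}(q)$'' and the strip $H_0$ ``near the unstable axis but away from $f^{-N}(q)$'' are not disjoint horizontal strips as described --- the correct separation of the two returns is along the stable direction of the rectangle, not the unstable one. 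The final coding step (definition of $\Lambda$ and $\pi$, continuity, surjectivity via nested nonempty compacta, and equivariance) is standard and correct once two disjoint Markov strips exist.
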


We would like to point that the topological crossing could be of infinite order.

See Theorems $2.1$, $2.4$ and Lemma $2.8$ of \cite{BuWe} for a proof. 
The idea boils down to finding a rectangle and strips that are stretched inside the rectangle, as in the usual horseshoe.

From basic properties of topological entropy we have
\begin{center}
    $h_{top} (f) \geq \frac{1}{n} h_{top} (f \e n | \Lambda) \geq \frac{1}{n} h_{top} (\sigma) >0$.
\end{center}

Therefore we have proved item $(2)$ of Theorem \ref{entropy for the standard map}.

\begin{Proposition}
    For $\mu \neq 4$, the standard map has positive topological entropy.
\end{Proposition}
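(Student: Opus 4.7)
The plan is to assemble the two preceding results into a short corollary. From Proposition \ref{transverse intersection}, for $\mu \neq 4$ every pair of adjacent branches $L^u, L^s$ of the principal saddle $p=(0,0)$ of $f_\mu$ contains a common point $q$ at which the local arcs $\gamma_u \subset L^u$ and $\gamma_s \subset L^s$ inside a small disk $D$ meet only at $q$, with the two components of $\gamma_u \setminus \{q\}$ lying in different components of $D \setminus \gamma_s$ and symmetrically for $\gamma_s \setminus \{q\}$. By definition, this $q$ is a topologically transverse homoclinic point for the hyperbolic fixed point $p$.

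Next, I would apply the horseshoe criterion stated just above, cited from \cite{BuWe}. Since $p$ is a hyperbolic periodic point of saddle type and two of its branches carry a topologically transverse homoclinic intersection, that proposition produces an integer $n \geq 1$, an $f_\mu^n$-invariant subset $\Lambda$, and a continuous surjection $\pi: \Lambda \to \Sigma$ onto the full two-symbol shift space $\Sigma$ satisfying $\pi \circ f_\mu^n = \sigma \circ \pi$, where $\sigma$ is the full shift on two symbols.

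Finally, using the standard facts that topological entropy is non-decreasing under restriction to an invariant subset and under continuous factor maps, and that $h_{\mathrm{top}}(f_\mu^n) = n \cdot h_{\mathrm{top}}(f_\mu)$, we conclude
$$h_{\mathrm{top}}(f_\mu) \;\geq\; \frac{1}{n}\, h_{\mathrm{top}}\!\left(f_\mu^n|_\Lambda\right) \;\geq\; \frac{1}{n}\, h_{\mathrm{top}}(\sigma) \;=\; \frac{\log 2}{n} \;>\; 0.$$
There is essentially no obstacle at this final step: all the substantive work has been done upstream, where Ushiki's theorem (Proposition \ref{Ushiki}) rules out saddle connections of the real-analytic $f_\mu$, Proposition \ref{homoclinic 2} then yields at least one homoclinic point, and the real-analyticity of $\Psi_2 \circ \beta$ used in Lemma \ref{crossing} upgrades a bare intersection into a topologically transverse one. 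The present statement is the clean packaging of those results.
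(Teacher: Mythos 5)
Your proposal is correct and follows exactly the paper's argument: Proposition \ref{transverse intersection} supplies the topologically transverse homoclinic point, the Burns--Weiss criterion yields a power of $f_\mu$ with the full two-shift as a topological factor on an invariant set $\Lambda$, and the standard entropy inequalities $h_{top}(f_\mu) \geq \frac{1}{n} h_{top}(f_\mu^n|_\Lambda) \geq \frac{1}{n} h_{top}(\sigma) > 0$ finish the proof. No gaps.
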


A well-known result of Katok, shows that in the case of $C \e {1+\alpha}$ surface diffeomorphisms, the positiveness of the entropy is equivalent to the existence of transverse homoclinic points. 

From this we have item $(3)$ of Theorem \ref{entropy for the standard map}.

\begin{Proposition}
    For $\mu \neq 4$, the standard map has saddles with transverse homoclinic points.
\end{Proposition}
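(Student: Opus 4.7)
The plan is to deduce the existence of a periodic saddle with transverse homoclinic points as a direct consequence of positive topological entropy, via Katok's theorem. Recall that the standard map $f_\mu$ is the time-one map of a trigonometric polynomial formula, hence real analytic, and in particular $C^{1+\alpha}$ for every $\alpha\in(0,1)$; it is also area preserving. By the previous Proposition, $h_{top}(f_\mu)>0$ whenever $\mu\neq 4$, so all hypotheses of Katok's theorem are satisfied.

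The main step is then to invoke Katok's theorem (see A. Katok, \emph{Lyapunov exponents, entropy and periodic orbits for diffeomorphisms}, IHES Publ. Math. 1980): if $f$ is a $C^{1+\alpha}$ diffeomorphism of a compact surface with $h_{top}(f)>0$, then there exists an ergodic invariant Borel probability measure $\nu$ with positive metric entropy whose Lyapunov exponents are nonzero; applying Pesin theory to $\nu$ one produces, for every $\varepsilon>0$, a compact invariant hyperbolic set $\Lambda_\varepsilon$ on which some iterate $f^n$ is topologically conjugate to a full shift on finitely many symbols. Such a horseshoe contains a hyperbolic periodic saddle $r$ of $f$, and the invariant manifolds of $r$ intersect transversally at points of $\Lambda_\varepsilon$; in particular $r$ admits transverse homoclinic orbits.

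Applying this to $f_\mu$ with $\mu\neq 4$ yields a hyperbolic periodic saddle of the standard map with transverse homoclinic points, establishing item (3) of Theorem \ref{entropy for the standard map}. The entire argument is essentially a citation, so there is no genuine obstacle beyond checking the two hypotheses of Katok's theorem (regularity and positive entropy), both of which are immediate from the explicit formula for $f_\mu$ and from item (2) of Theorem \ref{entropy for the standard map}, respectively. No additional control over the period, the location, or the expansion rate of the saddle $r$ is claimed; the statement asserts only existence, which is exactly what Katok's theorem provides.
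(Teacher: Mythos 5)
Your proof is correct and is exactly the paper's argument: both deduce item (3) by citing Katok's theorem for $C^{1+\alpha}$ (here real analytic) area preserving surface diffeomorphisms, using the positive entropy established in item (2) to produce a horseshoe and hence a periodic saddle with transverse homoclinic points. Your version merely spells out the Pesin-theoretic mechanism behind Katok's result in more detail than the paper does.
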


This completes the Proof of Theorem \ref{entropy for the standard map}.



\subsection{Positive topological entropy for real analytic diffeomorphisms}

Finally, we would like to remark that the arguments presented in this section can be used to prove the following result.

\begin{Proposition}

Let $S$ be a connected orientable real analytic surface of finite genus $g$ and $f:S \rightarrow S$ be an orientation preserving real analytic diffeomorphism that preserves a finite measure $\mu$ which is positive on open sets.

Let $p$ be a saddle fixed point of $f$ that satisfies the following conditions.
\begin{enumerate}
    \item $p$ has no connections.
    \item The four branches of $p$ are relatively compact in $S$.
    \item Every fixed point of $f$ in $cl_S ({W_p \e u \cup W_p \e s})$ is non degenerate.
    \item $p$ has homoclinic points (which happens automatically if $g=0$ or $1$).

\end{enumerate}

Then $p$ has topologically transverse homoclinic points and $h_{top} (f) > 0$.
    
\end{Proposition}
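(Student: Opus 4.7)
The plan is to reduce the result to the arguments used for the standard map in Section 5, with real analyticity of $f$ replacing the explicit analytic structure of $f_\mu$, and to handle the non-compact setting via the finite genus assumption. First I would pass to a compact picture: by hypothesis (2) the set $K := cl_S(W_p^u \cup W_p^s)$ is compact, and since $S$ has finite genus one can find a compact bordered subsurface $S_0 \subset S$ containing $K$; capping off the boundary circles of $S_0$ with disks produces a closed orientable surface $\widehat{S}$ of genus $g$ on which the homological argument of Lemma \ref{disconnecting the thesis} can be run. Finiteness of $\mu$ and its positivity on open sets supply Poincar\'e recurrence, the only other global dynamical input needed. The main obstacle is precisely this compactification step, but it is routine once one notes that every curve appearing in the subsequent proof lies inside $K$ and hence inside $\widehat{S}$.

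Second, I would invoke Proposition \ref{homoclinic 2}(2) when $g \geq 1$, or Proposition \ref{accumulation results}(2b) when $g=0$, fed by hypotheses (1), (3) and (4), to conclude that every pair of adjacent branches of $p$ intersects. Fix one adjacent pair $L^u, L^s$ meeting at a homoclinic point $q$.

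Third, I would upgrade this intersection to a topologically transverse one by the real-analytic argument of Subsection 5.3. Parameterize $W_p^u$ and $W_p^s$ by real-analytic immersions $\alpha, \beta$ conjugating $f\vert_{W_p^u}$ to multiplication by $\lambda>1$ and $f\vert_{W_p^s}$ to multiplication by $\lambda^{-1}$. Relabelling so that $L^u = W_+^u$ and $L^s = W_+^s$, form the arcs $\Omega_u := W_+^u[q,f^{2g+1}(q)]$ and $\Omega_s := W_+^s[q,f^{2g+1}(q)]$ and set $\Omega = \Omega_u \cup \Omega_s$. The analog of Lemma \ref{components} inside $\widehat{S}$ gives that $S \setminus \Omega$ has a component $A$ containing $p$ and at least one other component $C$, and some stable branch leaves $A$ and enters $C$. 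Let $\bar{s}$ be the first time that branch exits $A \cup \Omega_u$, so $\beta(\bar{s}) \in \Omega_u$. Near $p$, the map $\Phi(t,s) = \alpha(t)+\beta(s)$ is a real-analytic local diffeomorphism with inverse $\Psi = (\Psi_1, \Psi_2)$ carrying $\Omega_u$ locally into $\{\Psi_2 = 0\}$; hence $\Psi_2 \circ \beta$ is a real-analytic function of one variable near $\bar{s}$ whose zero locus coincides locally with the intersections of $\beta$ with $\Omega_u$. This function is not identically zero, because $\beta(s) \in C \subset S \setminus \Omega$ for some $s$ arbitrarily close to $\bar{s}$ from the right; hence its zero at $\bar{s}$ is isolated. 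Choosing a small disk $D$ around $\beta(\bar{s})$ then exhibits the topologically transverse crossing in the sense of Proposition \ref{transverse intersection}.

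Finally, the Burns-Weiss criterion stated at the start of Subsection 5.4 converts this topologically transverse homoclinic point into a full two-shift factor of some power $f^n$ restricted to an invariant set $\Lambda$, and the standard inequality $h_{top}(f) \geq \frac{1}{n} h_{top}(f^n\vert_\Lambda) \geq \frac{1}{n}\log 2 > 0$ completes the proof.
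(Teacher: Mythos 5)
Your argument is correct in substance and follows the same chain as the paper's (disconnection of the complement of $\Omega$ via the homology count of Lemma \ref{disconnecting the thesis}, Poincar\'e recurrence from the finite invariant measure, isolation of the zeros of the real-analytic function $\Psi_2\circ\beta$ to force a topologically transverse crossing, and Burns--Weiss to get entropy). The one genuine divergence is the compactification device, and it is worth comparing. The paper passes to the ideal completion $B(S)$, which by Proposition \ref{finite genus} is a compact surface containing $S$ as an open \emph{dense} subset with totally disconnected complement; the point of that choice is twofold: $f$ extends to a homeomorphism $f_*$ of $B(S)$ and the finite measure extends as well, so the dynamical arguments (Proposition \ref{homoclinic 2}, the Accumulation Lemma, recurrence) can be quoted on the compact surface essentially verbatim, and disconnectedness of $B(S)\setminus\Omega$ transfers to $S\setminus\Omega$ for free because every nonempty open set of $B(S)$ meets the dense set $S$. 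Your capped-off surface $\widehat{S}$ carries no dynamics at all, so you cannot literally ``invoke Proposition \ref{homoclinic 2}(2)'' as stated for $\widehat S$; you must rerun its proof keeping every dynamical step (recurrence applied to the components $B$ of $S\setminus\Omega$, the forward iteration pushing $f^nB$ into $A$) inside $S$, and use $\widehat S$ only for the purely topological Lemma \ref{disconnecting the thesis}. That is doable, but it leaves you with a transfer step you dismiss as routine: you must check that a disconnection $\widehat S\setminus\Omega=P\sqcup Q$ induces one of $S\setminus\Omega$, i.e.\ that neither $P$ nor $Q$ is swallowed by the capping disks. This holds because each capping disk is attached along a boundary circle of $S_0$ lying in $S\setminus K\subset S\setminus\Omega$, so every component of $\widehat S\setminus\Omega$ meets $S$; but it should be said, since it is exactly the point the ideal-completion route makes automatic. (Two cosmetic remarks: the genus of $\widehat S$ is only $\le g$, which is all the homology count needs; and for $g=0$ the intersection of adjacent branches is the classical spherical case of Proposition \ref{accumulation results}(2b), so hypothesis (4) is indeed only doing work for $g\ge 2$.) With the transfer step made explicit, your proof is complete and yields the same conclusion as the paper's.
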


The idea is to consider the ideal completion $B(S)$ of $S$, which in this case is a compact surface that contains $S$ as an open and dense subset. The ideal boundary is totally disconnected and the finite measure extends to $B(S)$. Then we do the same reasoning.


 
\section*{Declarations}

\begin{itemize}
\item Funding: not applicable.
\item Conflict of interest/Competing interests: not applicable.
\item Ethics approval: not applicable. 
\item Consent to participate: not applicable.
\item Consent for publication: not applicable.
\item Availability of data and materials: not applicable.
\item Code availability: not applicable.
\item Authors' contributions: not applicable.
\end{itemize}
\noindent



\def\cprime{$'$} \def\cprime{$'$} \def\cprime{$'$} \def\cprime{$'$}
\providecommand{\bysame}{\leavevmode\hbox to3em{\hrulefill}\thinspace}
\providecommand{\MR}{\relax\ifhmode\unskip\space\fi MR }
\providecommand{\MRhref}[2]{%
  \href{http://www.ams.org/mathscinet-getitem?mr=#1}{#2}
}
\providecommand{\href}[2]{#2}

\end{document}